\documentclass[11pt]{article}

\usepackage{graphicx}
\usepackage{amssymb}
\usepackage{amsmath}
\usepackage{amsthm}
\usepackage{latexsym}
\usepackage[english]{babel}
\usepackage{appendix}
\usepackage{tikz}
\usetikzlibrary{matrix,arrows,decorations.pathmorphing}

\usepackage{tikz-cd}
\usepackage{hyperref}

\usepackage[nottoc,notlot,notlof]{tocbibind}

\newtheorem{theorem}{Theorem}[section]

\newtheorem{lemma}{Lemma}[section]
\newtheorem{proposition}{Proposition}[section]

\newtheorem{corollary}{Corollary}[section]

\def \C {\mathbb{C}}
\def \R {\mathbb{R}}
\def \Z {\mathbb{Z}}

\def \N {\mathbb{N}}

\begin{document}

\title{A conformally invariant Dirac-type equation on compact spin manifolds: the effect of the geometry.}

\author{ Ali Maalaoui$^{(1)(2)}$ \& Vittorio Martino$^{(3)}$}
\begin{NoHyper}
\addtocounter{footnote}{1}
\footnotetext{Department of Mathematics, Clark University, 950 Main Street, Worcester, MA 01610, USA. E-mail address: \tt{amaalaoui@clarku.edu}}
\addtocounter{footnote}{1}
\footnotetext{Department of Mathematics, MIT, 77 Massachusetts Avenue
Cambridge, MA 02139-4307. E-mail address: \tt{maala650@mit.edu}}
\addtocounter{footnote}{1}
\footnotetext{Dipartimento di Matematica, Alma Mater Studiorum - Universit\`a di Bologna. E-mail address: \tt{vittorio.martino3@unibo.it}}
\end{NoHyper}

\date{}
\maketitle

\vspace{5mm}

{\noindent\bf Abstract}  {\small Let $(M,g)$ be a closed Riemannian spin manifold of dimension greater or equal than four. Here we consider a generalized conformally invariant equation involving the Dirac operator, where the nonlinear term is given by the convolution with a suitable power of the Green’s function of the conformal Laplacian. In this situation one can show that there exists a conformally invariant Yamabe type constant, which corresponds to the energy of the non-trivial ground state solutions; moreover this constant is always less or equal than the one of the round sphere: in case the inequality is strict, then the considered equation admits a non-trivial solution. The main result of this paper is that the inequality is always strict, unless $(M,g)$ is conformal to the round sphere; therefore a non-trivial ground state solution always exists. In particular, since in dimension four this equation coincide with the conformal Dirac-Einstein system, our result a ground state solution for the system. We point out that aside from some perturbative or special cases, this presents the first general existence result for the conformal Dirac-Einstein equations in dimension four.}

\vspace{5mm}

\noindent
{\small Keywords: Dirac operator, Convolution non-linearity, Conformal Dirac-Einstein.

\vspace{4mm}

\noindent
{\small 2020 MSC. Primary: 53C18; 53C27.  Secondary: 58J60; 81R25.}

\vspace{4mm}


\section{Introduction and motivation}
Let us consider a Riemannian manifold $(M,g)$, closed (compact, without boundary), of dimension $n \geq 3$; let also $\Sigma_{g} M$ denote the related spinor bundle (see Section 2). In \cite{MMM} the authors studied a conformally invariant non-local problem which, in particular, coincides with the conformal Dirac-Einstein system in dimension 4. Indeed, the conformal Dirac-Einstein system takes the form  
\begin{equation}\label{DE4}
\left\{\begin{array}{ll}
L_{g} u=\displaystyle\frac{\lambda}{n-2}|\psi|^{2} u^{\frac{4-n}{n-2}}\\
\\
D_{g}\psi=\lambda u^{\frac{2}{n-2}} \psi
\end{array}.
\right.
\end{equation}
where $L_{g}$ is the conformal Laplacian, $D_{g}$ is the Dirac operator acting on spinors $\psi\in \Sigma_{g} M$, with $\lambda$ a real parameter. Starting from the pioneering work \cite{BrillWheeler1957} and then \cite{Fin}, this conformal system has been quite studied in recent years: see for instance (\cite{BM, MaalaouiMartino2019, MMX}) in dimension 3 (\cite{BorrelliMaalaouiMartino2023} the case with boundary), \cite{YX} for all dimensions via a flow approach; in fact, it was surprising to find that there have been even earlier investigations of the conformal Dirac-Einstein system in \cite{Br} by T. Branson.

\noindent
In particular when $n=4$ and $G_{g}$ is the Green's function of $L_{g}$, the system can be written as a single equation (we assume $\lambda=1$)
\begin{equation}\label{eqo}
D_{g}\psi=\Big(\int_{M}G_{g}(\cdot,y)|\psi(y)|^{2}\ dv_{g}(y)\Big) \psi = (G_g * |\psi|^{2})\psi.
\end{equation}
Equations of this kind (from an analysis point of view, focusing mainly on the singular behaviour of the Green's function rather than conformal geometry) were treated in several works in the literature because of their physical significance: for instance, we refer the reader to \cite{BGDB,DWX,ES,GG,Enno,Lieb1,Lieb2,MS} and the references therein.\\
At this point, one way of extending $(\ref{eqo})$ to higher dimensions, preserving the conformal invariance (and hence the geometric nature) of the equation, was to study the following problem:

\begin{equation}\label{eq}
D_g\psi=(G_g^{s}*|\psi|^{2})\psi,
\end{equation}
where we denoted by $$(G_g^{s}*f)(x) =\int_{M}G_g^{s}(x,y)f(y) \; dv_{g}(y) \; $$
the convolution of a given function $f$ with the Green's function $G_g^{s}$ of the conformal fractional Laplacian  $P_{g}^{s}$, of order $2s=n-2$, see for example \cite{CC, Mar, GJMS, GZ}.\\
Since the equation has a variational structure, the existence of solutions was investigated from that perspective, by studying the energy functional
\begin{align}\label{energyfunctional}
J_{g}(\psi) & = \frac{1}{2}\int_{M}\langle D_g\psi,\psi\rangle \ dv_{g}-\frac{1}{4}\int_{M} (G_g^{s}*|\psi|^{2})|\psi|^{2}\; dv_{g} \notag \\
& = \frac{1}{2}\int_{M}\langle D_g\psi,\psi\rangle \ dv_{g}-\frac{1}{4}\int_{M\times M} G_g^{s}(x,y)|\psi(y)|^{2} \; |\psi(x)|^{2}\; dv_{g}(y)\; dv_{g}(x) \;,
\end{align}
where $\langle \cdot ,\cdot\rangle$ is the canonical Hermitian metric defined on  $\Sigma_g M$.\\
The main focus in \cite{MMM} was on the study of the non-compactness of this variational problem and the bubbling phenomena that manifests through this lack of compactness, which led to the following theorem:

\begin{theorem}[\cite{MMM}]\label{thm1}
Let $(M,g)$ be a closed Riemannian spin manifold of dimension $n \geq 3$. Let us assume that $(M,[g])$ has a positive s-Yamabe constant $Y_s(M,[g])$ (see below) and let $(\psi_{k})_{k\in \N}$ be a Palais-Smale sequence for $J_{g}$ at level $c\geq 0$. Then there exist $\psi_{\infty}\in C^{\infty}(M,\Sigma_{g} M)$, a solution of $(\ref{eq})$, $m$ sequences of points $x_{k}^{1},\cdots, x_{k}^{m} \in M$ such that $\lim_{k\to \infty}x_{k}^{j}= x^{j}\in M$, for $j=1,\dots,m$ and $m$ sequences of real numbers $R_{k}^{1},\cdots, R_{k}^{m}$ converging to zero, such that:
\begin{itemize}
\item[i)]  $\displaystyle \psi_{k}=\psi_{\infty}+\sum_{j=1}^{m}\phi_{k}^{j}+o(1)$ in $H^{\frac{1}{2}}(\Sigma M)$,
\item[ii)] $\displaystyle J_{g}(\psi_{k})=J_{g}(\psi_{\infty})+ 
    \sum_{j=1}^{m}J_{g_{\mathbb{R}^{n}}}(\Psi_\infty^{j})+o(1)$,
\end{itemize}
where
$$\phi_{k}^{j}=(R_{k}^{j})^{-1}\beta_{j}\sigma_{k,j}^{*}(\Psi_\infty^{j}) ,$$
with $\sigma_{k,j}=(\rho_{k,j})^{-1}$ and $\rho_{k,j}(\cdot)=exp_{x_{k}^j}(R_{k}^j \cdot)$ is the exponential map defined in a suitable neighborhood of $\R^{n}$.
Also, here $\beta_{j}$ is a smooth compactly supported function, such that $\beta_{j}=1$ on $B_{1}(x^{j})$ and $supp(\beta_{j})\subset B_{2}(x^{j})$ and $\Psi_\infty^{j}$ is the solution to our equations (\ref{eq}) on $\mathbb{R}^n$ with its Euclidian metric $g_{\R^n}$.
\end{theorem}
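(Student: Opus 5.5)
The plan is the standard concentration-compactness (Struwe-type) decomposition, adapted to the nonlocal Hartree-type nonlinearity. \textbf{Step 1: boundedness.} For a Palais--Smale sequence I combine $J_g(\psi_k)\to c$ and $J_g'(\psi_k)[\psi_k]=o(\|\psi_k\|)$:
$$J_g(\psi_k)-\tfrac12 J_g'(\psi_k)[\psi_k]=\tfrac14\int_M (G_g^s*|\psi_k|^2)|\psi_k|^2\,dv_g .$$
This bounds the quartic term by $C+o(\|\psi_k\|)$. To bound the $H^{1/2}$ norm I split $\psi_k=\psi_k^++\psi_k^-+\psi_k^0$ along the spectrum of $D_g$. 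I test $J_g'(\psi_k)$ against $\psi_k^\pm$ and estimate the nonlinear term by Hardy--Littlewood--Sobolev. Positivity of $Y_s$ gives $G_g^s>0$, and $G_g^s(x,y)\sim d(x,y)^{-2}$ is a Riesz kernel of order $n-2$, so HLS applies with $L^{\frac{2n}{n-1}}$ norms; this is compatible with $H^{1/2}\hookrightarrow L^{\frac{2n}{n-1}}$. Hence $\|\psi_k^\pm\|^2\le C\,\|(G^s_g*|\psi_k|^2)\psi_k\|_{H^{-1/2}}\|\psi_k\|$, and an interpolation argument gives a uniform bound.

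\textbf{Step 2: weak limit and first remainder.} Passing to a subsequence, $\psi_k\rightharpoonup\psi_\infty$ weakly in $H^{1/2}$. The limit $\psi_\infty$ solves \eqref{eq}, because the nonlinear term converges weakly, using local compactness of the embedding below the critical exponent and the HLS structure. Elliptic regularity (a bootstrap with the Green kernel) makes $\psi_\infty$ smooth. I then set $\psi_k^1=\psi_k-\psi_\infty$. A Brezis--Lieb lemma for the nonlocal quartic term shows that $\psi_k^1$ is a Palais--Smale sequence at level $c-J_g(\psi_\infty)$ and that it converges weakly to $0$.

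\textbf{Step 3: extraction of bubbles.} If $\psi_k^1\to0$ strongly I stop. Otherwise the quartic term of $\psi_k^1$ stays bounded below, so there is concentration. I choose $x_k^1$ and $R_k^1\to0$ by fixing a small amount $\delta$ of the energy density $\int_{B_{R}(x)}(G^s*|\psi|^2)|\psi|^2$, using a Lévy concentration function. I then rescale via $\rho_{k,1}$: the pulled-back spinors $R_k\,\rho_{k,1}^*\psi_k^1$ live on a ball of radius $\sim 1/R_k$ with metric converging to the Euclidean one. Conformal covariance of $D_g$ and of $G_g^s$ shows that the rescaled sequence is Palais--Smale for the Euclidean functional on expanding domains, and that $R^{-1}$ is exactly the right weight. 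The spinor identification between $\Sigma_gM$ and $\Sigma\R^n$ is done through Bourguignon--Gauduchon. The rescaled sequence converges weakly to $\Psi^1_\infty\ne0$; nontriviality follows from the choice of $\delta$ together with an $\varepsilon$-regularity estimate. The limit solves \eqref{eq} on $\R^n$, and it extends to $S^n$ with finite energy.

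\textbf{Step 4: iteration.} I subtract $\phi_k^1$ and check, again via a Brezis--Lieb splitting and the fact that the cross terms vanish in the scaling limit, that the energy decreases by $J_{\R^n}(\Psi^1_\infty)$. Every nontrivial Euclidean solution has energy at least a fixed $c_*>0$, by the Sobolev--HLS constant, so the procedure ends after finitely many steps.

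The main obstacle is the nonlocality in Steps 2 and 4. The Brezis--Lieb splitting and the decoupling of cross terms between a bubble and $\psi_\infty$, or between two bubbles, involve integrals $\iint G^s(x,y)|\phi^i(x)|^2|\phi^j(y)|^2$ that couple distant regions. My first attempt would be to control these by HLS together with the fact that the rescaled kernel converges to $|x-y|^{-2}$, and to use $R^i_k/R^j_k+R^j_k/R^i_k+|x^i_k-x^j_k|^2/(R^i_kR^j_k)\to\infty$ to force the cross terms to vanish. The other delicate point is Step 1, since the functional is strongly indefinite.
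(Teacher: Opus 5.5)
The paper gives no proof of this theorem. It quotes it from \cite{MMM} and records only the kernel properties the argument needs: the local expansion $G^s_g=h_n\,d(x,y)^{-2}+O(d(x,y)^{-1})$, conformal invariance of $J_g$, and $G^s_{\R^n}=h_n|x-y|^{-2}$. Your Struwe-type scheme uses exactly these and is the expected route, but two of your steps do not go through as written.

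\textbf{The blow-up weight.} You claim that conformal covariance singles out $R^{-1}$ as the right weight; for $n\ge4$ it does not. If $\psi(x)=R^{-a}\Psi(x/R)$ on $\R^n$, then $\int\langle D\psi,\psi\rangle\,dx=R^{\,n-1-2a}\int\langle D\Psi,\Psi\rangle\,dz$. The quartic term $\iint|x-y|^{-2}|\psi(x)|^2|\psi(y)|^2\,dx\,dy$ picks up the factor $R^{\,2n-2-4a}$. So $J$ and the $\dot H^{1/2}$-norm are dilation invariant only for $a=\frac{n-1}{2}$. Equivalently, the paper's rule $\widetilde\psi=u^{\frac{1-n}{n-2}}\psi$, applied with the constant factor $u^{\frac{4}{n-2}}=R^{-2}$ that your rescaling amounts to, multiplies spinors by $R^{\frac{n-1}{2}}$. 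This equals $R$ only for $n=3$. The exponent in the statement appears to be inherited from the three-dimensional case, and the paper's own bubble (\ref{psiepsilon}) carries the factor $\varepsilon^{-\frac{n-1}{2}}$.

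With your rescaling $R_k\,\rho_{k,1}^*\psi^1_k$, the rescaled spinors have $\dot H^{1/2}$-energy equal to $R_k^{3-n}$ times that of $\psi^1_k$ on the concentration region. This blows up, so there is no bounded sequence and no finite-energy limit $\Psi^1_\infty$. Dually, profiles $R_k^{-1}\beta_j\sigma_{k,j}^*\Psi$ have $H^{1/2}$-norm $O(R_k^{(n-3)/2})\to0$ and cannot carry the energies in ii). Steps 3 and 4 must be run with $R_k^{\frac{n-1}{2}}\rho_{k,j}^*\psi_k$ and $\phi^j_k=(R^j_k)^{-\frac{n-1}{2}}\beta_j\sigma^*_{k,j}\Psi^j_\infty$.

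\textbf{Boundedness and regularity.} Test $J_g'(\psi_k)$ against $\psi_k^+-\psi_k^-$ and set $Q_k=\int_M(G^s_g*|\psi_k|^2)|\psi_k|^2\le 4c+o(1)+o(\|\psi_k\|)$. Hardy--Littlewood--Sobolev and H\"older give only $\|(G^s_g*|\psi_k|^2)\psi_k\|_{L^{\frac{2n}{n+1}}}\le C\,Q_k^{1/2}\|\psi_k\|$. This yields $\|\psi_k\|^2\le o(\|\psi_k\|)+C\,Q_k^{1/2}\|\psi_k\|^2$, which does not close, and no interpolation between these two facts repairs it. What you need is a bound of the form $C\,Q_k^{\theta}\|\phi\|$ with $\theta<1$.

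The missing input is that $G^s_g$ is the kernel of $(P^s_g)^{-1}$ with $2s=n-2$. Hence $w_k=G^s_g*|\psi_k|^2$ satisfies $\int_M w_k\,P^s_g w_k=Q_k$. Coercivity of $P^s_g$ and $H^s\hookrightarrow L^n$ then give $\|w_k\|_{L^n}\le C\,Q_k^{1/2}$. If also $w_k\ge0$,
\[
\int_M w_k|\psi_k||\phi|\le Q_k^{1/2}\Big(\int_M w_k|\phi|^2\Big)^{1/2}\le C\,Q_k^{3/4}\|\phi\|,
\]
whence $\|\psi_k\|\le o(1)+C\big(4c+o(\|\psi_k\|)\big)^{3/4}$, which is bounded.

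This argument uses coercivity of $P^s_g$ and $G^s_g\ge0$. For $n\ge5$ the operator has order $n-2>2$, and neither property is a formal consequence of $Y_s(M,[g])>0$, which is an infimum over positive functions only. Your assertion that positivity of $Y_s$ gives $G^s_g>0$ therefore needs a reference or an extra hypothesis.

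Likewise, a plain bootstrap does not start for $\psi_\infty$. The potential $G^s_g*|\psi_\infty|^2\in L^n$ is exactly critical for $D_g$, so the iteration gains nothing. You first need a Brezis--Kato type step that exploits local smallness of that potential in $L^n$.
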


\noindent
Here we summarize the main ingredients that were needed in the proof of the previous result:
\begin{itemize}
\item[i)] $G_{g}^{s}$ is symmetric and there exists $C > 0$ such that in some small neighborhood in normal coordinates 
$$G_{g}^{s}(x,y)=\frac{h_{n}}{|x-y|^{2}}+r(x,y) \;, $$
where
$$h_{n}=\frac{1}{2^{n-2}\pi^{\frac{n}{2}}\Gamma(\frac{n}{2}-1)}\; , \qquad |r(x,y)|\leq \frac{C}{|x-y|} \; .$$
\item[ii)] $G_{g}^{s}$ is conformally invariant in the sense that the functional $J_{g}$ is conformally invariant.
\item[iii)] On $\R^{n}$, $G_{\R^{n}}^{s}(x,y)=\frac{h_{n}}{|x-y|^{2}}.$
\end{itemize}
Therefore, it is equally reasonable to think about a different extension of $(\ref{eqo})$ consisting of replacing $G^{s}_{g}$ with any potential $V_g$ satisfying the points i)-ii)-iii) above as long as, in dimension 4, $V_g=G_{g}$. Hence one can consider the potential $V_{g}$ defined by 
$$V_{g}=d_{n} \, (G_{{g}})^{\frac{2}{n-2}}, \quad d_{n}=\frac{h_{n}}{b_{n}^{\frac{2}{n-2}}}, \quad b_{n}=\frac{1}{n(n-2)\omega_{n}}=\frac{\Gamma(\frac{n}{2}+1)}{n(n-2)\pi^{\frac{n}{2}}} \; .$$
This choice obviously satisfies the points i) and iii). Moreover, it is compatible with the models in \cite{BGDB,DWX,ES,GG,Enno,Lieb1,Lieb2,MS}. The last point that we need to check is the conformal invariance in ii), with respect to the related functional $J_{g}$, with the potential $V_g$:
\begin{equation}\label{modifiedJg}
  J_{g}(\psi)=\frac{1}{2}\int_{M}\langle D \psi,\psi\rangle   \ dv_{g} -\frac{1}{4}\int_{M\times M} V_{g}(x,y)|\psi(x)|^{2} |\psi(y)|^{2} \ d_{g}v(x) dv_{g}(y) \; .
\end{equation}
So let us consider a conformal change of the metric
\begin{equation}\label{changemetric}
  \widetilde{g} = u^{\frac{4}{n-2}}g,  \; 0<u\in C^{\infty}(M) \; .
\end{equation}
Given a spinor $\psi \in \Sigma_{g} M$, we set 
$$\widetilde{\psi} = u^{\frac{1-n}{n-2}} \psi \in \Sigma_{\widetilde{g}} M \; ,$$
where we implicitly understand the action of a canonical isometric isomorphism between the spinor bundles $\Sigma_{\widetilde{g}} M$ and $\Sigma_{g} M$ (see Section 2 below). In this way, we have the conformal change of the Dirac operator
$$
D_{ \widetilde{g}}\widetilde{\psi}  = u^{-\frac{n+1}{n-2}} D_{g} \psi \; .
$$
On the other hand, the potential $V_{g}$ satisfies the following transformation under conformal change:
$$
V_{\widetilde{g}}(x,y)=u(x)^{-\frac{2}{n-2}} u(y)^{-\frac{2}{n-2}}V_{g}(x,y) \; .
$$
Now, taking into account the change of the volume form
$$dv_{\widetilde{g}}  = u^{\frac{2n}{n-2}}  dv_{g} \; ,$$
we substitute in the modified functional $J_{g}$ and find
\begin{align*}
J_{\widetilde{g}}(\widetilde{\psi}) & := \frac{1}{2}\int_{M}\langle D_{\widetilde{g}}\widetilde{\psi},\widetilde{\psi}\rangle \ dv_{\widetilde{g}}-\frac{1}{4}\int_{M\times M} V_{\widetilde{g}}(x,y)|\widetilde{\psi}(y)|^{2} \; |\widetilde{\psi}(x)|^{2}\; dv_{\widetilde{g}}(y)\; dv_{\widetilde{g}}(x) \;,\\
& = \frac{1}{2}\int_{M}\langle D_{g}\psi,\psi\rangle \ dv_{g}-\frac{1}{4}\int_{M\times M} V_g(x,y)|\psi(y)|^{2} \; |\psi(x)|^{2} \;  u^{\frac{2+2n-2-2n}{n-2}} \; dv_{g}(y)\; dv_{g}(x) \; .
\end{align*}
Therefore $J_{\widetilde{g}}(\widetilde{\psi})=J_{g}(\psi)$ and the problem is conformally invariant.

\noindent
In this paper we will focus on proving the existence of non trivial critical points of the energy functional $J_{g}$ in (\ref{modifiedJg}); Namely, we want to exhibit solutions to the problem:
\begin{equation}\label{eqv}
D_{g}\psi=\left( V_{g}*|\psi|^{2}\right) \psi.
\end{equation}
In \cite{MMM}, the authors provided an Aubin-type inequality that would ensure the existence of least energy solutions to our problem. This can be summarized in the following corollary:
\begin{corollary}[\cite{MMM}]\label{cor1}
Under the assumptions of Theorem \ref{thm1}, there exists a conformally invariant constant $\overline{Y}(M,[g])>0$ with the following properties:
\begin{itemize}
\item[i)] $\overline{Y}(M,[g])$ corresponds to the energy of the non-trivial ground state solution of the equation (\ref{eqv}).
\item[ii)] $\overline{Y}(M,[g])\leq \overline{Y}(S^{n},[g_{0}])=\frac{\lambda^{+}(S^{n}, [g_{0}])^{2} Y_{\frac{n-2}{2}}(S^{n}, [g_{0}])}{4}$.
\item[iii)] If $\overline{Y}(M,[g])< \overline{Y}(S^{n},[g_{0}])$ then the problem $(\ref{eqv})$ has a non-trivial solution.
\end{itemize}
\end{corollary}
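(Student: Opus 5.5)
This corollary comes out of the compactness analysis of Theorem \ref{thm1} combined with a Nehari-type minimax and a test-spinor computation. We define
\[
\overline{Y}(M,[g])=\inf_{\psi\in H^{\frac12},\;\int_M\langle D_g\psi,\psi\rangle>0}\ \sup_{h\in H^-}\ \sup_{t>0} J_g\big(t\psi+h\big),
\]
where $H^{\pm}$ are the positive and negative spectral subspaces of $D_g$ in $H^{\frac12}(\Sigma_g M)$. Equivalently, $\overline{Y}$ is the infimum of $J_g$ over the generalized Nehari manifold $\mathcal N=\{\psi\notin H^-:\ dJ_g(\psi)\psi=0,\ dJ_g(\psi)[H^-]=0\}$.

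\textbf{Invariance and positivity.} The conformal invariance $J_{\widetilde g}(\widetilde\psi)=J_g(\psi)$ shown above is induced by the linear isometric bijection $\psi\mapsto\widetilde\psi$, which maps the splitting $H^+\oplus H^-$ for $g$ to a splitting that gives the same minimax value. This yields invariance of $\overline{Y}$. Positivity follows from two facts:
\begin{itemize}
\item by the Hardy--Littlewood--Sobolev inequality (using point i) for $V_g$), the quartic term is bounded by $C\|\psi\|_{H^{1/2}}^4$;
\item on $H^+$ the quadratic term is coercive, so $\overline Y\ge c>0$.
\end{itemize}

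\textbf{Existence of a minimizer, i.e.\ iii) and i).} We run the reduction: for fixed $\psi\in H^+$, maximize over $H^-\oplus\R^+\psi$. This is a strictly concave problem in $h$, since the quartic term is convex and enters with a minus sign. The reduction produces a Palais--Smale sequence for $J_g$ at level $\overline{Y}$, obtained for instance by Ekeland on $\mathcal N$. Theorem \ref{thm1} then gives the decomposition
\[
\overline{Y}=J_g(\psi_\infty)+\sum_j J_{\R^n}(\Psi^j_\infty).
\]
Each bubble is a nontrivial solution on $\R^n\cong S^n$, so $J_{\R^n}(\Psi^j_\infty)\ge \overline{Y}(S^n)$. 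Also $J_g(\psi_\infty)\ge 0$, since for solutions $J=\frac14\int\langle D\psi,\psi\rangle=\frac14\iint V|\psi|^2|\psi|^2\ge0$. Hence, if $\overline Y<\overline Y(S^n)$, no bubble occurs and $\psi_\infty\neq0$. Since $\psi_\infty$ is a solution, it lies on $\mathcal N$, so $J_g(\psi_\infty)=\overline Y$ and $\psi_\infty$ is a ground state. This gives i) in the strict case; in general, i) is the definitional statement that $\overline Y$ is the least energy level on $\mathcal N$.

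\textbf{Upper bound ii).} We transplant a ground state $\Psi$ on $\R^n$ via a cutoff and blow-up in normal coordinates at a point $x_0$, setting $\psi_\varepsilon=\beta\,\sigma_\varepsilon^*\Psi$ as in Theorem \ref{thm1}. By point i), $V_g=h_n|x-y|^{-2}+O(|x-y|^{-1})$, and the decay of $\Psi$ is like $|x|^{1-n}$. From these we show that both the quadratic and quartic terms converge to their Euclidean values, and likewise for the $H^-$-correction. The main obstacle is controlling the $\sup_{h\in H^-}$: $\psi_\varepsilon$ is not in $H^+$, so one must show that the maximizing $h_\varepsilon$ tends to $0$ in $H^{1/2}$. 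We argue by concavity: the maximizer solves an equation whose data is $o(1)$. This gives $\overline Y(M)\le J_{\R^n}(\Psi)+o(1)=\overline Y(S^n)$.

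\textbf{Sphere constant.} Finally, the value $\overline Y(S^n)=\lambda^+(S^n)^2Y_{\frac{n-2}{2}}(S^n)/4$ is computed as follows. We combine Hijazi-type inequality $\lambda^+\,\|\psi\|^2_{L^{2n/(n-1)}}\le\int\langle D\psi,\psi\rangle$ with the sharp HLS inequality for the kernel $|x-y|^{-2}$, whose optimizer $|\psi|^2\propto u^{\frac{2n}{n-2}}$ relates to $Y_{\frac{n-2}{2}}$. Equality is attained by Killing spinors, so optimizing over $t$ in $\frac t2 A-\frac{t^2}4B$ gives $A^2/(4B)$, which yields the stated formula.
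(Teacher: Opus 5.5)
Your architecture matches the scheme the paper recalls in Section~3: $\overline{Y}(M,[g])=\delta_0$ through the reduction $\tau$, compactness below the sphere level from Theorem~\ref{thm1}, and an upper bound from a transplanted bubble. Your compactness argument for iii) and i) is fine. The clear error is in the sphere constant. The inequality $\lambda^{+}\|\psi\|^{2}_{L^{2n/(n-1)}}\le\int_M\langle D\psi,\psi\rangle$ is false for general $\psi$: it fails for every $\psi\in H^{-}\setminus\{0\}$, where the right-hand side is negative, because $\lambda^{+}$ controls the quadratic form only in an inf--sup sense. So your computation proves only $\overline{Y}(S^{n},[g_0])\le \frac14\lambda^{+}(S^n,[g_0])^{2}Y_{\frac{n-2}{2}}(S^n,[g_0])$. (This bound is correct: the Killing spinor is an exact critical point lying in $H^{+}$, so $\tau=0$ along its ray, and it is extremal for both inequalities.) The reverse bound needs a different argument. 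One route is to show first that $\overline{Y}(S^n,[g_0])$ equals the least energy $m$ of nontrivial solutions. The inequality $\le$ holds because solutions lie on $\mathcal N$. If the inequality were strict, a PS sequence at that level could not bubble, by the decomposition, and would converge to a solution of energy $<m$. Then, for a solution $\psi$, set $W=V*|\psi|^{2}>0$, so $D\psi=W\psi$. Hijazi's trick with $\hat g=W^{2}g$ makes $W^{-\frac{n-1}{2}}\psi$ an eigenspinor with eigenvalue $1$, whence $\lambda^{+}\le\|W\|_{L^{n}}$. Since $V=G^{s}$ on $S^n$ with $s=\frac{n-2}{2}$, the sharp Sobolev inequality for $P^{s}$ applied to $W$ gives $\|W\|_{L^{n}}^{2}\le Y_{s}^{-1}\int W|\psi|^{2}=4Y_{s}^{-1}J(\psi)$, hence $J(\psi)\ge\frac14(\lambda^{+})^{2}Y_{s}$.

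Two further steps are asserted rather than proved. The first is conformal invariance. The map $\psi\mapsto\widetilde\psi$ is not an $H^{\frac12}$-isometry; it is an $L^{\frac{2n}{n-1}}$-isometry preserving $Q$ and the quartic term. It also does not carry the spectral splitting of $D_g$ onto that of $D_{\widetilde g}$, so the claim that the transported splitting ``gives the same minimax value'' is exactly what must be shown. You can obtain it from your other steps. Nontrivial critical points lie on $\mathcal N$, so $\overline{Y}\le m_M$, the least energy of nontrivial solutions on $M$. Together with ii)--iii) this gives $\overline{Y}(M,g)=\min\{m_M,\overline{Y}(S^n,[g_0])\}$, and $m_M$ is conformally invariant because $\psi\mapsto\widetilde\psi$ maps solutions to solutions with the same energy. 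The second is the upper bound in ii), where concavity handles only the $H^{-}$-directions. Along $t$ the functional is not concave, and $\|P^{-}\varphi_\varepsilon\|_{\frac12}$ is not small: already on $\R^n$ the bubble has a nonzero negative spectral part, and that splitting is dilation invariant. Hence the bound $\sup_{h}J(t\varphi_\varepsilon+h)\le J(t\varphi_\varepsilon)+\frac12\|P^{-}\nabla J(t\varphi_\varepsilon)\|^{2}$ is useless away from $t=1$. You need the Nehari rescaling $t_\varepsilon\to1$ of Proposition~\ref{prop diff} iii). Combined with part i) there, this yields Proposition~\ref{prop exp}, $\delta_0\le J_g(\varphi_\varepsilon)+O(\|\nabla J_g(\varphi_\varepsilon)\|^{2})$, which is how the paper controls the test spinor.
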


\noindent
Here $(S^{n}, [g_{0}])$ denotes the sphere with its standard round metric. Also, the constant $\lambda^{+}(M,[g])$ is the B\"{a}r-Hijazi-Lott invariant (see for instance \cite{Am1,Am2}), defined by
$$\lambda^{+}(M,[g])=\inf\left\{\lambda_{1}(D_{h})Vol(h)^{\frac{1}{n}}; h\in [g]\right\},$$
and $Y_{s}(M,[g])$ is the $s$-Yamabe constant corresponding to the continuous embedding of $L^{\frac{2n}{n-2s}}(M)$ in $H^{s}(M)$ defined by 
$$Y_{s}(M,[g])=\inf\left\{\frac{\int_{M}uP_{g}^{s}u\ dv_{g}}{\Big(\int_{M}u^{\frac{2n}{n-2s}}\ dv_{g}\Big)^{\frac{n-2s}{n}}}; u>0 \text{ and } u\in H^{s}(M)\right\},$$
where $P_{g}^{s}$ is the GJMS operator of order $2s$ (\cite{GJMS,GZ}).

\noindent
Our main result in the present paper can then be expressed as follows:
\begin{theorem}\label{main}
Let $(M,g)$ be a closed Riemannian spin manifold of dimension $n \geq 4$, with positive Yamabe invariant $Y(M,[g])>0$. Then $$\overline{Y}(M,[g])<\overline{Y}(S^{n},[g_{0}]),$$ 
unless $(M,[g])$ is conformal to the round sphere $(S^{n},[g_{0}])$ and in that case $$\overline{Y}(M,[g])=\overline{Y}(S^{n},[g_{0}]).$$ In particular, The equation $(\ref{eqv})$ always has a non-trivial solution.
\end{theorem}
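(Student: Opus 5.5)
We will construct a test spinor whose energy lies strictly below $\overline{Y}(S^{n},[g_{0}])$, in the spirit of Aubin and Schoen for the Yamabe problem. By Corollary \ref{cor1}, $\overline{Y}(M,[g])$ is a mountain-pass, or Nehari-type, level. So it is enough to find a spinor $\psi$ such that
\[
\sup_{t>0}J_g(t\psi)=\frac{1}{4}\frac{\Big(\int_M\langle D_g\psi,\psi\rangle\,dv_g\Big)^2}{\int_{M\times M}V_g(x,y)|\psi(x)|^2|\psi(y)|^2\,dv_g\,dv_g}<\overline{Y}(S^n,[g_0]),
\]
where we restrict to spinors with $\int\langle D\psi,\psi\rangle>0$. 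Since $Y(M,[g])>0$, the Green's function $G_g$ of $L_g$ exists and is positive, so $V_g$ is well defined. If $(M,g)$ is conformal to the round sphere, conformal invariance gives equality directly. Otherwise we use the conformal normal form centered at a point $p$. We then pass to the stereographic (inverted) picture, using $G_g(p,\cdot)^{4/(n-2)}g$, which is an asymptotically flat, scalar-flat metric on $M\setminus\{p\}$ with mass $m$. By the positive mass theorem (the spin case, Witten, which applies since $M$ is spin), $m>0$ unless $M$ is conformally the sphere.

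\textbf{Step 1 (test spinor).} Take the Euclidean ground state $\Psi_\varepsilon$ of (\ref{eqv}) on $\R^n$, i.e.\ the pullback of a Killing spinor on $S^n$, scaled at $\varepsilon$. In the spirit of Ammann--Humbert--Morel for the Bär--Hijazi--Lott invariant, glue $\Psi_\varepsilon$ in a small ball around $p$, and glue outside it to a multiple of the Green's function of the Dirac operator $D_g$ at $p$, or equivalently to a parallel-spinor-type expansion in the asymptotically flat end. The advantage of working in the asymptotically flat picture is that the potential $V$ there is exactly $d_n G^{2/(n-2)}$ transformed. The two-point Green's function $G_{\tilde g}(x,y)$ for the inverted metric then equals $|x-y|^{2-n}$ plus corrections governed by the mass, because $G_g(x,y)$ has expansion $b_n|x-y|^{2-n}+A+\dots$ near $p$, with $A$ proportional to $m$.

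\textbf{Step 2 (expansion of the two terms).} Expand the numerator $\int\langle D\psi,\psi\rangle$ and the denominator $\iint V|\psi|^2|\psi|^2$ in $\varepsilon$. The numerator matches the sphere value up to $O(\varepsilon^{n-2})$ terms from the spinorial mass. For $n\ge4$ this may vanish or contribute with the good sign, and the conformal normal coordinates remove lower-order curvature terms. The denominator gains a positive correction: $V_g(x,y)=h_n|x-y|^{-2}\big(1+\tfrac{2}{n-2}\tfrac{A}{b_n}|x-y|^{n-2}+\dots\big)$. Since $A>0$, integrating this against the bubble densities increases the denominator by $c\,m\,\varepsilon^{n-2}$ with $c>0$. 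Collecting terms yields
\[
\frac{(\text{num})^2}{4\,\text{den}}\le\overline{Y}(S^n,[g_0])\big(1-c'\,m\,\varepsilon^{n-2}+o(\varepsilon^{n-2})\big),
\]
which gives strict inequality for small $\varepsilon$.

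\textbf{Main obstacle.} The hard step is controlling the Dirac (numerator) term to order $o(\varepsilon^{n-2})$, or showing that its leading correction has a favourable sign, while extracting the positive contribution from the constant term $A$ of $G_g$ in the nonlocal term. The nonlocal term is the new feature: its expansion needs precise asymptotics of $G_g(x,y)$ for both $x$ and $y$ near $p$, not only for $G_g(p,\cdot)$. The first approach to try is to write $G_g(x,y)=b_n\,\mathrm{dist}(x,y)^{2-n}+A+O(|x|+|y|)$ in conformal normal coordinates, using the symmetry and regularity of the remainder. For the numerator, I would adapt the Ammann--Humbert--Morel estimate. If the spinorial mass term turns out to be negative, we instead choose the gluing region so that the scalar Green's mass dominates, which requires $n\ge4$ so that the errors are of order $\varepsilon^{n-1}$ or are $o(\varepsilon^{n-2})$.
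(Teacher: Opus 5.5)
The decisive gap is your opening reduction. Since $D_g$ has infinitely many negative eigenvalues, $J_g$ is strongly indefinite, and $\overline{Y}(M,[g])$ is \emph{not} bounded above by $\sup_{t>0}J_g(t\psi)=a^2/(4b)$, where $a=\int_M\langle D_g\psi,\psi\rangle\,dv_g$ and $b=\iint V_g|\psi(x)|^2|\psi(y)|^2$. To see this, take $\psi_s=\varphi_1+s\varphi_{-1}$ with orthonormal eigenspinors and let $s\uparrow\sqrt{\lambda_1/|\lambda_{-1}|}$. Then $a=\lambda_1-s^2|\lambda_{-1}|\to0^+$, while $b$ stays bounded away from $0$ because $V_g>0$. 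So your quotient tends to $0$, and your reduction would give $\overline{Y}(M,[g])\le 0$.

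The ground-state level is $\delta_0=\inf_{\psi\in H^+\setminus\{0\}}\max_{t>0}\tilde J(t\psi)$, with $\tilde J(\psi^+)=\max_{h\in H^-}J_g(\psi^++h)\ge J_g(\psi^++\psi^-)$. A bound on $J_g$ along a ray therefore controls $\delta_0$ from the wrong side. The paper gets around this with Proposition \ref{prop exp}, namely $\delta_0\le J_g(\varphi_\varepsilon)+O(\|\nabla J_g(\varphi_\varepsilon)\|^2_{H^{-1/2}})$. This holds because, for an approximate critical point, the $H^-$-component is close to $\tau(\varphi_\varepsilon^+)$ and the Nehari rescaling is close to $1$ (Proposition \ref{prop diff}). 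Hence the test spinor must nearly solve (\ref{eqv}) at a quantified rate. A substantial part of the proof, Proposition \ref{lemma1 exp}, establishes $\|\nabla J_g(\varphi_\varepsilon)\|_{H^{-1/2}}=O(\varepsilon^{\frac{n-1}{2}})$ for $4\le n\le 6$ and in the locally conformally flat case, and $O(\varepsilon^3)$ up to a logarithm for $n\ge 7$. Its square is $o$ of the energy gain. That is why the paper uses the exact Euclidean solution $\Psi_\varepsilon$, only cut off by $\eta$ and grafted in conformal normal coordinates. Your proposal has no gradient estimate. A spinor glued to the Dirac Green's function would first have to be shown to be such an approximate solution.

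There are two further problems. First, the mass argument does not cover $n\ge 6$ with $M$ not locally conformally flat. There your expansion of $V_g$ is wrong: by Proposition \ref{propgreen}, $G_g(p,\cdot)$ contains terms proportional to $|W(p)|^2\ln r$ ($n=6$) or $|W(p)|^2 r^{4-n}$ ($n\ge 7$), which dominate any constant term. Moreover, the blown-up metric generally decays too slowly for its mass to be defined. The paper handles this case with an Aubin-type argument at a point where $W(p)\neq 0$. The Dirac term ($F_3$, using Lemma \ref{lemyx}) contributes $-C|W(p)|^2\varepsilon^4$, and the nonlocal term ($H_2$) contributes $+C|W(p)|^2\varepsilon^4$ (resp.\ $\varepsilon^4|\ln\varepsilon|$ when $n=6$); both lower the energy.

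Second, the ``main obstacle'' you identify in the numerator is self-inflicted. Because $|\Psi_\varepsilon(x)|^2\sim\varepsilon^{n-1}|x|^{2-2n}$ for $|x|\gg\varepsilon$, the plain cut-off bubble already gives Dirac-term and tail errors of order $O(\varepsilon^{n-1})=o(\varepsilon^{n-2})$ when $n=4,5$ or $M$ is locally conformally flat. No Dirac Green's function or spinorial mass is needed. In those cases your gain proportional to $A\,\varepsilon^{n-2}$ from the nonlocal term is the right mechanism, and it matches the paper's estimate of $H_2$.
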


\noindent
This result is similar to the one for the Yamabe problem, following \cite{LP}. Indeed, in the case of the Yamabe problem, the strategy of the proof consists in expanding the energy functional around an adequate test function. By doing so, two situations occur.
\begin{itemize}
\item The manifold is not locally conformally flat and the dimension $n\geq 6$: in this case the dominant term in the energy expansion is related to the Weyl tensor, bringing the energy below the critical energy level \cite{Au}.
\item The manifold is locally conformally flat or the dimension is below 6: this case relies on the Positive Mass Theorem \cite{S,W}; a more involved test function need to be manufactured and in the expansion of the energy, the mass term will be the dominant one, bringing the energy below the critical level.
\end{itemize}
We also refer to the work \cite{NSS} for a more comprehensive presentation on the influence of the different terms that appear in the asymptotic expansion of the energy functional.

\noindent
As a corollary of Theorem \ref{main}, we have:
\begin{corollary}
The conformal Dirac-Einstein system has a solution in dimension $n=4$ for manifolds with positive Yamabe invariant.
\end{corollary}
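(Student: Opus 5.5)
We follow the scheme of Lee–Parker for the Yamabe problem. By Corollary \ref{cor1}, it is enough to exhibit one spinor $\psi$ whose energy, computed through the natural minimax characterization of $\overline{Y}(M,[g])$, lies strictly below $\overline{Y}(S^n,[g_0])$. That characterization is the Nehari-type quotient
$$\overline{Y}(M,[g])=\inf_{\psi}\ \frac{1}{4}\,\frac{\big(\int_M\langle D_g\psi,\psi\rangle\,dv_g\big)^2}{\int_{M\times M}V_g(x,y)|\psi(x)|^2|\psi(y)|^2\,dv_g(x)\,dv_g(y)},$$
taken over $\psi$ with positive Dirac energy. Both the numerator and the denominator are conformally invariant. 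Since $Y(M,[g])>0$, the Green's function $G_g$ exists and is positive, so we may pass to the conformally blown-up metric $\hat g=G_{p}^{4/(n-2)}g$ at a chosen point $p$. This metric is asymptotically flat with mass $m$. The positive mass theorem in its spin version (Witten), which applies because $M$ is spin, gives $m\ge0$, with equality only when $(M,g)$ is conformal to $S^n$.

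\textbf{Test spinor.} Work in conformal normal coordinates at $p$. There $G_p=b_n|x|^{2-n}(1+A|x|^{n-2}+O'(|x|^{n-1}))$, where $A$ is proportional to the mass. This holds in the locally conformally flat case and for $n\le5$; in the remaining cases, the Weyl term already supplies the strict inequality, as in Aubin. Take as model the standard bubble $\Psi_\varepsilon$ on $\R^n$, which solves (\ref{eqv}) with the kernel $h_n|x-y|^{-2}$. It is obtained from a Killing spinor on $S^n$ via stereographic projection, with $|\Psi_\varepsilon|\sim(\varepsilon/(\varepsilon^2+|x|^2))^{(n-1)/2}$. Transplant it to $M$ in the manner of Ammann–Humbert–Morel for $\lambda^+$: use the Bourguignon–Gauduchon identification of spinor bundles and glue, using a cutoff, to $\varepsilon^{(n-1)/2}$ times the Green's function of the Dirac operator (equivalently, to a parallel-like spinor for $\hat g$ near infinity). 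Then expand the numerator. Following Ammann–Humbert–Morel, we expect
$$\int\langle D\psi_\varepsilon,\psi_\varepsilon\rangle=\lambda^+(S^n)\text{-term}-c\,m\,\varepsilon^{n-2}+o(\varepsilon^{n-2}).$$

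\textbf{Denominator.} Here the particular choice $V_g=d_n G_g^{2/(n-2)}$ matters. In the coordinates above, $V_g(x,y)=h_n|x-y|^{-2}\big(1+\frac{2}{n-2}\,\text{(regular part of }G\text{ in the }(x,y)\text{ variables)}+\dots\big)$. The regular part at the base point contributes $+c'\,A\,\varepsilon^{n-2}$ to the double integral. This increases the denominator when $m>0$, which is favourable. Combining the two expansions gives $\overline{Y}(M,[g])\le\overline{Y}(S^n)-c''m\varepsilon^{n-2}+o(\varepsilon^{n-2})<\overline{Y}(S^n)$ when $m>0$. When $m=0$, the manifold is conformal to the sphere, and there equality holds by conformal invariance and Corollary \ref{cor1}. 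Existence of a nontrivial solution then follows from Corollary \ref{cor1}(iii).

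\textbf{Main obstacle.} The hardest step is the precise expansion of the nonlocal term. We need the next-order expansion of $G_g(x,y)$ jointly in both variables near $p$, not only of $G_p(\cdot)$. We also need to check that the error terms in the double integral, which involve the kernel $|x-y|^{-2}$ multiplied against the bubble profiles, are genuinely $o(\varepsilon^{n-2})$. Finally, the signs of the mass contributions from the Dirac part and the potential part must add rather than cancel. The first thing I would try is to avoid the joint expansion altogether. I would compute directly in the metric $\hat g$, where $V_{\hat g}$ is the $\frac{2}{n-2}$ power of the Green's function of an asymptotically flat manifold, and compare it with the Euclidean kernel using the maximum principle. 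The conformal invariance of $J$ then reduces the problem to estimating $V_{\hat g}-h_n|x-y|^{-2}$ on the region where the test spinor concentrates.
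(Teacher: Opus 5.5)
There is a genuine gap at the very first step: the ``Nehari-type quotient'' you write down is not $\overline{Y}(M,[g])$. For $\psi$ with $\int_M\langle D_g\psi,\psi\rangle\,dv_g>0$ your quotient equals $\max_{t>0}J_g(t\psi)$. Because $J_g$ is strongly indefinite, its infimum is $0$. To see this, take $\psi=\varphi_1+b\,\varphi_{-1}$ with $b^2|\lambda_{-1}|\uparrow\lambda_1$: the numerator tends to $0$, while the denominator stays bounded away from $0$ since $V_g>0$.

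The ground-state level is instead $\delta_0=\inf_{\psi\in H^+\setminus\{0\}}\max_{t>0}\tilde J(t\psi)$, with $\tilde J(\psi)=J_g(\psi+\tau(\psi))$. Since $\tau$ maximizes $J_g$ in the $H^-$ directions (Proposition \ref{proptau}), you have $\max_{t>0}J_g(t\psi)\le\max_{t>0}\tilde J(tP^+\psi)$. So the quotient of a test spinor is only a \emph{lower} bound for a quantity that bounds $\delta_0$ from above. Showing that it lies below $\overline{Y}(S^n,[g_0])$ therefore proves nothing. This is exactly why the paper says a direct test-function expansion does not estimate the critical level.

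What is missing is control of the $H^-$-correction $\tau(P^+\varphi_\varepsilon)$. The paper obtains it through Proposition \ref{prop exp}, $\delta_0\le J_g(\varphi_\varepsilon)+O(\|\nabla J_g(\varphi_\varepsilon)\|^2)$. This requires a quantitative bound on $\|\nabla J_g(\varphi_\varepsilon)\|_{H^{-\frac{1}{2}}}$ whose square is negligible against the mass gain. For $n=4$ this is the estimate $O(\varepsilon^{3/2})$ of Proposition \ref{lemma1 exp}, which gives an error $O(\varepsilon^3)$ against the gain $-CA(p)\varepsilon^2$ of Proposition \ref{lemma 2 exp}. Your plan contains no such gradient estimate. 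Without one, no expansion of $J_g$ or of your quotient at the test spinor gives $\overline{Y}(M,[g])<\overline{Y}(S^n,[g_0])$.

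Three further points.

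\textbf{Dirac energy.} Your expected expansion of the Dirac energy is unjustified. In Ammann--Humbert--Morel the correction comes from the mass endomorphism of the Green's function of $D_g$. That is a different object from the ADM mass of $G_p^{4/(n-2)}g$: it enters at order $\varepsilon^{n-1}$ and gets no sign from the positive mass theorem. The paper instead uses the plain cut-off bubble in conformal normal coordinates. In dimension four its Dirac energy is bounded by the model value plus $O(\varepsilon^3)$.

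\textbf{Source of the gain.} The whole gain comes from the nonlocal term. There $V_g=d_4G_g$ has regular part proportional to $A(p)$ near $p$, which gives $H_2\ge kA(p)\varepsilon^2$. Your denominator paragraph agrees with this, so your worry about the two mass contributions cancelling does not arise.

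\textbf{From the equation to the system.} To prove the corollary as stated, you must still pass from (\ref{eqv}) to the system. For $n=4$ one has $\frac{2}{n-2}=1$, so $V_g$ is a constant multiple of $G_g$. Then $u:=V_g*|\psi|^2>0$ satisfies $L_gu=d_4|\psi|^2$ and $D_g\psi=u\psi$, which is (\ref{DE4}) after rescaling $\psi$. Also, when $(M,[g])$ is conformal to $(S^4,[g_0])$, existence comes from the explicit bubble, not from Corollary \ref{cor1}(iii).
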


\noindent
To the best of our knowledge, aside from the perturbation result in \cite{GuidiMaalaouiMartino2021} (in dimension 3), this Corollary provides the only definite existence result for a solution to the conformal Dirac-Einstein system.

\noindent
Since the functional $J_{g}$ is strongly indefinite, classical minimization techniques do not apply and hence, a direct expansion of a test function would not lead to the existence of a critical point or to the estimate of a critical level. This constitutes a major difference from the classical scalar case. In fact, we will be following closely the functional framework introduced in \cite{YX1}. \\
The paper is organized as follows: in Section 2, we present the necessary preliminaries and tools that we will require to deal with the problem. In Section 3 we recall the functional setting that would allow us to estimate the energy level of the ground state solution. In Section 4, we focus on the construction of the test spinor including its gradient and energy estimates. We will distinguish two cases here, similar to those mentioned above, in particular: \\
- the case $n=4,5$ or $n\geq 6$ with $M$ not locally conformally flat;\\
- the case $n\geq 6$ with $M$ locally conformally flat.

\vspace{5mm}

{\noindent\bf Acknowledgment}

\noindent
The first author is supported by the AMS-Simons Research Enhancement Grant for PUI faculty under the project "Conformally Invariant
Non-Local Equations on Spin Manifolds".

\section{Preliminaries}

\noindent
In this section we recall some definitions and basic formulas that we will need in the sequel (see for instance \cite{KimFriedrich2000, MaalaouiMartino2022, MMM}). A spin structure on a riemannian manifold $(M,g)$ is a pair $(P_{Spin}(M,g),\sigma)$, where $P_{Spin}(M,g)$ is a $Spin(n)$-principal bundle and $\sigma : P_{Spin}(M,g)\to P_{SO}(M,g)$ is a 2-fold covering map, which restricts to a non-trivial covering $\kappa: Spin(n)\to SO(n)$ on each fiber. That is, the quotient of each fiber by $\Z_{2}$ is isomorphic to the frame bundle of $M$ and hence, the following diagram commutes:
\begin{center}

\tikzset{every picture/.style={line width=0.75pt}} 

\begin{tikzpicture}[x=0.75pt,y=0.75pt,yscale=-1,xscale=1]

\draw    (173,59) -- (380,59) ;
\draw [shift={(382,59)}, rotate = 180] [color={rgb, 255:red, 0; green, 0; blue, 0 }  ][line width=0.75]    (10.93,-3.29) .. controls (6.95,-1.4) and (3.31,-0.3) .. (0,0) .. controls (3.31,0.3) and (6.95,1.4) .. (10.93,3.29)   ;
\draw    (150,80) -- (247.36,147.86) ;
\draw [shift={(249,149)}, rotate = 214.88] [color={rgb, 255:red, 0; green, 0; blue, 0 }  ][line width=0.75]    (10.93,-3.29) .. controls (6.95,-1.4) and (3.31,-0.3) .. (0,0) .. controls (3.31,0.3) and (6.95,1.4) .. (10.93,3.29)   ;
\draw    (415,79) -- (314.63,149.85) ;
\draw [shift={(313,151)}, rotate = 324.78] [color={rgb, 255:red, 0; green, 0; blue, 0 }  ][line width=0.75]    (10.93,-3.29) .. controls (6.95,-1.4) and (3.31,-0.3) .. (0,0) .. controls (3.31,0.3) and (6.95,1.4) .. (10.93,3.29)   ;

\draw (86,43) node [anchor=north west][inner sep=0.75pt]    {$P_{Spin}( M,g)$};
\draw (388,43) node [anchor=north west][inner sep=0.75pt]    {$P_{SO}( M,g)$};
\draw (257,151) node [anchor=north west][inner sep=0.75pt]    {$( M,g)$};
\draw (275,30) node [anchor=north west][inner sep=0.75pt]    {$\sigma $};

\end{tikzpicture}
\end{center}

\noindent
We denote by $\mathbb{S}_{n}$ the unique (up to isomorphism) irreducible complex $Cl_{n}$-module such that $Cl_{n}\otimes \C \equiv End_{\C}(\mathbb{S}_{n})$ as a $\C$-algebra, where $Cl_{n}$ denotes the Clifford algebra of $\R^{n}$. This allows us to define the spinor bundle $\Sigma_{g} M$ as
$$\Sigma_{g} M:=P_{Spin}(M,g)\times_{\sigma} \mathbb{S}_{n}.$$
In fact, $\Sigma_{g} M$ is a Hermitian bundle equipped with a metric connection induced by the Levi-Civita connection on $TM$, that we will denote by $\nabla$. Moreover, there is a natural Clifford multiplication defined by the action of $TM$ on $\Sigma_{g} M$. We can summarize the main properties of the spinor bundle in the following few points (here ``$\cdot$'' denotes the Clifford multiplication):

\begin{itemize}
\item For all $X, Y \in C^{\infty}(M,TM)$ and $\psi \in C^{\infty}(M,\Sigma_{g} M)$ we have $X\cdot Y \cdot \psi +Y\cdot X \cdot \psi=-2g(X,Y)\psi$. 
\item If $(\cdot,\cdot)$ denotes the Hermitian metric on $\Sigma_{g} M$, then for all $X \in C^{\infty}(M,TM)$ and $\psi, \phi \in C^{\infty}(M,\Sigma_{g} M)$ we have $(X\cdot \psi , \phi)=-(\psi, X\cdot \phi)$.
\item For all $\psi, \phi \in C^{\infty}(M,\Sigma_{g} M)$ and $X\in C^{\infty}(M,TM)$, then $X(\psi,\phi)=(\nabla_{X}\psi,\phi)+(\psi,\nabla_{X} \phi)$.
\item For all $X, Y \in C^{\infty}(M,TM)$ and $\psi \in C^{\infty}(M,\Sigma_{g} M)$ we have $\nabla_{X} (Y\cdot \psi)=(\nabla_{X}Y)\cdot \psi+Y\cdot \nabla_{X}\psi$.
\end{itemize}

\noindent
For the rest of the paper, we let $\langle \cdot, \cdot \rangle:= Re (\cdot,\cdot)$. Then $\langle \cdot, \cdot \rangle$ defines a metric on $\Sigma_{g} M$.
The Dirac operator $D_{g}$ is then defined on $C^{\infty}(M,\Sigma_{g} M)$ as the composition of the Clifford multiplication and the connection $\nabla$. Indeed, if $(e_{1}, \cdots, e_{n})$ is a local orthonormal frame around a point $p\in M$ and $\psi \in C^{\infty}(M,\Sigma_{g} M)$ then one can locally define $D_{g}$ by  
$$D_{g}\psi :=\sum_{i=1}^{n}e_{i}\cdot \nabla_{e_{i}} \psi.$$
The Dirac operator is a natural first order operator acting on smooth sections of $\Sigma_{g} M$. Moreover, if $M$ is compact, then $D_{g}$ is essentially self-adjoint on $L^{2}(\Sigma_{g} M):=L^{2}(M,\Sigma_{g} M)$, with compact resolvent. In particular, there exists a complete orthonormal basis $(\varphi_{k})_{k\in \Z}$ of $L^{2}(\Sigma_{g} M)$ consisting of eigenspinors of $D_{g}$. That is $D_{g}\varphi_{k}=\lambda_{k}\varphi_{k}$, with $\lambda_{k}\to \pm \infty$ when $k\to \pm \infty$. We will use the convention that $\lambda_{k}>0$ (resp. $\lambda_{k}<0$) when $k>0$ (resp. $k<0)$.

\begin{proposition}[\cite{Bour, Friedrich2000}]
Consider a compact spin manifold $(M,g,\Sigma_{g}M)$, then
\begin{itemize}
\item[i)] The Dirac operator $D_{g}$ is conformally invariant. That is, if $\hat{g}:=e^{2u}g$, then there exists a unitary isomorphism $F_{g,\hat{g}}:\Sigma_{g}M \to \Sigma_{\hat{g}}M$ so that for $\varphi \in C^{\infty}(M,\Sigma_{g}M)$, $$D_{\hat{g}}(e^{-\frac{n-1}{2}u}F_{g,\hat{g}}(\varphi))=e^{-\frac{n+1}{2}u}F_{g,\hat{g}}(D_{g}\varphi).$$
\item[ii)] For $\varphi \in C^{\infty}(M,\Sigma_{g}M)$, $D_{g}^{2}\varphi =-\Delta_{g}\varphi +\frac{R_{g}}{4}\varphi$, where $R_{g}$ is the scalar curvature. 
\end{itemize}
\end{proposition}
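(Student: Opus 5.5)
For part i), I would realize $F_{g,\hat g}$ at the level of frame bundles. The map $P_{SO}(M,g)\to P_{SO}(M,\hat g)$, $(e_1,\dots,e_n)\mapsto (e^{-u}e_1,\dots,e^{-u}e_n)$, is an $SO(n)$-equivariant diffeomorphism. It therefore lifts to a $Spin(n)$-equivariant map between the spin structures, and hence induces a fibrewise unitary isomorphism $F=F_{g,\hat g}$ of the associated bundles. By construction $F(X\cdot_g\varphi)=\hat X\cdot_{\hat g}F(\varphi)$, where $\hat X=e^{-u}X$.

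The key step is to compare the spinor connections. I would start from the Koszul formula for the Levi-Civita connection of $\hat g=e^{2u}g$:
$$\hat\nabla_XY=\nabla_XY+X(u)Y+Y(u)X-g(X,Y)\,\mathrm{grad}_g u .$$
The spinor connection in a local frame is $\nabla\varphi=d\varphi+\frac14\sum_{k,l}g(\nabla e_k,e_l)\,e_k\cdot e_l\cdot\varphi$. Inserting the formula above and rewriting the new terms via the Clifford relations, I expect
$$\hat\nabla_X F\varphi=F\Big(\nabla_X\varphi-\tfrac12 X\cdot\mathrm{grad}\,u\cdot\varphi-\tfrac12 X(u)\varphi\Big).$$
Summing $\hat e_i\cdot\hat\nabla_{\hat e_i}$ with $\hat e_i=e^{-u}e_i$, and using $\sum_i e_i\cdot e_i=-n$ and $\sum_i e_i\cdot X\cdot e_i=(n-2)X$, gives
$$D_{\hat g}F\varphi=e^{-u}F\Big(D_g\varphi+\tfrac{n-1}{2}\,\mathrm{grad}\,u\cdot\varphi\Big).$$
I would then apply this to $e^{-\frac{n-1}{2}u}\varphi$. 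The Leibniz rule $D(f\varphi)=\mathrm{grad} f\cdot\varphi+fD\varphi$ produces the term $-\frac{n-1}{2}e^{-\frac{n-1}{2}u}\,\mathrm{grad}\,u\cdot\varphi$, which cancels the gradient term exactly. What remains is $e^{-\frac{n+1}{2}u}F(D_g\varphi)$, as claimed.

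For part ii), I would work at a point $p$ with a local orthonormal frame satisfying $(\nabla e_i)(p)=0$. At $p$ this gives
$$D^2\varphi=\sum_{i,j}e_i\cdot e_j\cdot\nabla_{e_i}\nabla_{e_j}\varphi=-\sum_i\nabla_{e_i}\nabla_{e_i}\varphi+\tfrac12\sum_{i\ne j}e_i\cdot e_j\cdot\mathcal R(e_i,e_j)\varphi .$$
The first sum is $-\Delta_g\varphi$ at $p$, i.e. the rough Laplacian, written with the sign convention of the statement. The second sum comes from antisymmetrizing and uses the spinor curvature formula $\mathcal R(X,Y)\varphi=\frac14\sum_{k,l}g(R(X,Y)e_k,e_l)\,e_k\cdot e_l\cdot\varphi$. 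This leaves a quadruple sum $\frac18\sum R_{ijkl}\,e_i e_j e_k e_l\cdot\varphi$, which I would reduce in three steps:
\begin{itemize}
\item the first Bianchi identity kills the totally antisymmetric part in $i,j,k$;
\item the remaining contractions yield $\frac14\sum_{j,l}\mathrm{Ric}_{jl}\,e_j\cdot e_l\cdot\varphi$;
\item the symmetry of $\mathrm{Ric}$ together with the Clifford relation turns this into $\frac{R_g}{4}\varphi$.
\end{itemize}

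The only genuinely delicate steps are the two pieces of bookkeeping: the Clifford contraction in i) and the Bianchi reduction in ii). Sign conventions for the Clifford relation and for $R$ must be tracked carefully, since they decide the constant $\frac{n-1}{2}$ and the sign of $R_g/4$. I would fix these by checking the result on the round sphere, where $D^2$ has spectrum bounded below by $\frac{n^2}{4}=\frac{n}{4(n-1)}R$, which is consistent with $R_g/4$ plus a nonnegative rough Laplacian term.
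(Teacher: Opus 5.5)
The paper gives no proof of this proposition; it cites Bourguignon--Gauduchon and Friedrich. Your strategy (rescaled orthonormal frames, the conformal change of the spin connection, and the local-frame Bochner computation) is the standard one from those sources. Part i) is correct as written. With $\hat e_i=e^{-u}e_i$ one finds $\hat g(\hat\nabla_X\hat e_i,\hat e_j)=g(\nabla_Xe_i,e_j)+e_i(u)g(X,e_j)-g(X,e_i)e_j(u)$, which gives exactly your formula for $\hat\nabla_X F\varphi$. The Dirac computation then needs only $\sum_i e_i\cdot e_i=-n$; the identity $\sum_i e_i\cdot X\cdot e_i=(n-2)X$ is not used.

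Part ii) has a sign error: as written, your second and third bullets produce $-\frac{R_g}{4}\varphi$, not $+\frac{R_g}{4}\varphi$. With the Clifford relation $e_j\cdot e_l+e_l\cdot e_j=-2\delta_{jl}$ and $\mathrm{Ric}$ symmetric, $\sum_{j,l}\mathrm{Ric}_{jl}\,e_j\cdot e_l=-R_g$. Hence $\frac14\sum_{j,l}\mathrm{Ric}_{jl}\,e_j\cdot e_l\cdot\varphi=-\frac{R_g}{4}\varphi$.

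The error is in the contraction. Write $R_{ijkl}=g(R(e_i,e_j)e_k,e_l)$ and $\mathrm{Ric}_{jk}=\sum_i R_{ijki}$. After Bianchi removes the terms with $i,j,k$ pairwise distinct, the terms with $k=i$ and with $k=j$ each contribute $\sum_{i,j}R_{ijil}\,e_j$. So $\sum_{i,j,k}R_{ijkl}\,e_i\cdot e_j\cdot e_k=2\sum_{i,j}R_{ijil}\,e_j=-2\sum_j \mathrm{Ric}_{jl}\,e_j$. The curvature term is therefore $-\frac14\sum_{j,l}\mathrm{Ric}_{jl}\,e_j\cdot e_l\cdot\varphi=+\frac{R_g}{4}\varphi$.

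Your proposed sphere check would not catch this. Since $D^2\ge 0$, the spectral bound $\frac{n^2}{4}$ is compatible with either sign of the zeroth-order term. A check that does discriminate uses a Killing spinor on the unit sphere, $\nabla_X\psi=\frac12 X\cdot\psi$. Then $-\Delta_g\psi=\frac n4\psi$ and $D^2\psi=\frac{n^2}{4}\psi$, which forces the zeroth-order term to be $+\frac{n(n-1)}{4}=+\frac{R}{4}$.
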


\noindent
In what follows, we will identify spinors $\varphi \in \Sigma_{g}M$ with their isomorphic image $F_{g,\hat{g}}(\varphi)$, unless there is a specific distinction. Notice that as a result of the previous Proposition we have that $D_{g}$ is invertible if the Yamabe invariant of $(M,g)$ is positive: in fact, by the conformal invariance of the Dirac operator, point $i)$, the vanishing of the kernel is preserved by a conformal change, therefore if the Yamabe constant is positive, then
the conformal class of the metric contains a metric with positive scalar curvature, hence point $ii)$ implies the vanishing of the kernel of the Dirac operator on this metric.\\ 
We can define now the (unbounded) operator $|D_{g}|^{s}:L^{2}(\Sigma_{g}M)\to L^{2}(\Sigma_{g}M)$, for $s>0$ by $$|D_{g}|^{s}\psi =\sum_{k\in \Z} |\lambda_{k}|^{s}a_{k}\varphi_{k},$$
for $\psi=\sum_{k\in \Z}a_{k}\varphi_{k}$. The Sobolev space $H^{\frac{1}{2}}(\Sigma_{g} M)$ is then defined by
$$H^{\frac{1}{2}}(\Sigma_{g} M):=\{\psi \in L^{2}(\Sigma_{g} M); |D_{g}|^{\frac{1}{2}}\psi \in L^{2}(\Sigma_{g} M)\}.$$
This function space is equivalent to the classical $H^{\frac{1}{2}}$-Sobolev space and will be endowed with the inner product $\langle \cdot, \cdot \rangle_{\frac{1}{2}}$ defined by
$$\langle \psi,\phi \rangle_{\frac{1}{2}}:=\int_{M}\langle |D_{g}|^{\frac{1}{2}}\psi,|D_{g}|^{\frac{1}{2}}\phi \rangle \ dv_{g}, \forall \psi, \phi \in H^{\frac{1}{2}}(\Sigma_{g} M).$$
Notice that this inner product defines a natural semi-norm on $H^{\frac{1}{2}}(\Sigma_{g} M)$ by setting
$$\|\psi \|_{\frac{1}{2}}:=\||D_{g}|^{\frac{1}{2}}\psi\|_{L^{2}}.$$
This semi-norm becomes a norm when $D_{g}$ is invertible. Using the spectral resolution of $D_{g}$, we can split the space $H^{\frac{1}{2}}(\Sigma_{g} M)$ in a convenient way that fits our analysis. That is, we can write 
\begin{equation}\label{split}
H^{\frac{1}{2}}(\Sigma_{g} M)=H^{-}\oplus H^{0}\oplus H^{+} \; ,
\end{equation}
with
$$H^{-}:=\overline{\text{span}\{\varphi_i\}_{i<0}},\quad
H^{0}:=\ker D_{g}, \quad
H^{+}:=\overline{\text{span}\{\varphi_i\}_{i>0}}.$$
In particular, if the manifold $(M,g)$ has a positive Yamabe invariant, then $H^{0}=\{0\}$; therefore, we have
$$H^{\frac{1}{2}}(\Sigma_{g} M)=H^{-}\oplus H^{+} \; .$$
We will denote by $P^{-}$ (resp. $P^{+}$) the $L^{2}$-orthogonal projection onto $H^{-}$ (resp. $H^{+}$). We finish this section with the following proposition providing an expansion of the Green's function of the conformal Laplacian $L_{g}$ (Lemma 6.4 in \cite{LP}).

\begin{proposition}\label{propgreen}
We consider a compact Riemannian manifold $(M,g)$ as above with positive Yamabe constant $Y(M,[g])>0$, then the Green's function $G_{g}$ of the conformal Laplacian $L_{g}$ has the following asymptotic expansion, in a conformal normal coordinate system centered at a point $p$, that we can identify with the origin, with $r=|x|$.
\begin{itemize}
\item[i)] For $n=3,4,5$ or $(M,g)$ locally conformally flat around a point $p$: 
$$G_{g}(p,x)=b_{n}\Big(\frac{1}{r^{n-2}}+A(p)\Big)+o(1).$$
\item[ii)] For $n=6$
$$G_{g}(p,x)=b_{n}\Big(\frac{1}{r^{4}}-\frac{1}{1440}|W(p)|^{2}\ln(r)\Big)+O(1).$$
\item[iii)] For $n\geq 7$
$$G_{g}(p,x)=\frac{b_{n}}{r^{n-2}}\Big(1+\frac{(n-2)}{48(n-1)(n-4)}\Big(\frac{r^{4}}{12(n-6)}|W(p)|^{2}-R_{,ij}x^{i}x^{j}r^{2}\Big)\Big)+O(r^{7-n}).$$
where $b_{n}=\frac{1}{(n-2)\omega_{n-1}}$, $W(p)$ is the Weyl tensor at $p$ and $R_{,ij}$ is the covariant derivative of the scalar curvature $R=R_g$ with respect to $\partial_{x_{i}}$ and $\partial_{x_{j}}$.
\end{itemize}
\end{proposition}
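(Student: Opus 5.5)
The plan is to build a parametrix for $L_{g}=-\frac{4(n-1)}{n-2}\Delta_{g}+R_{g}$ at $p$ in conformal normal coordinates, and then to read off the regular part. Fix $p$ and choose, by the Lee--Parker conformal normal coordinate theorem, a metric in $[g]$ with $\det g_{ij}=1+O(r^{N})$ for $N$ large. Since $G_{g}$ transforms conformally, it suffices to prove the expansion for this representative. Two consequences of these coordinates drive everything:
\begin{itemize}
\item For a radial function $f(r)$ we have $\Delta_{g}f=f''+\frac{n-1}{r}f'+O(r^{N-2})f'$, because $\partial_{i}(\sqrt{\det g}\,g^{ij}x_{j})=\partial_{i}x_{i}+O(r^{N})$ in these coordinates.
\item The scalar curvature satisfies $R(p)=0$, $\nabla R(p)=0$ and $\Delta R(p)=-\frac{1}{6}|W(p)|^{2}$.
\end{itemize}
Hence $L_{g}\big(b_{n}r^{2-n}\big)=\delta_{p}+b_{n}R\,r^{2-n}+O(r^{N-n})$, with $R=\frac12 R_{,ij}x^{i}x^{j}+O(r^{3})$. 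So $G_{g}-b_{n}r^{2-n}$ solves an equation whose right-hand side is $-b_{n}Rr^{2-n}$ plus a small error, and the three cases of the Proposition follow from how singular this right-hand side is.

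\textbf{Case i).} For $n=3,4,5$, the right-hand side is $O(r^{4-n})$, which lies in $L^{q}$ for some $q>n/2$. Elliptic regularity for $L_{g}$, which is invertible because $Y>0$, then makes $G_{g}-b_{n}r^{2-n}$ continuous at $p$. Its value there defines $A(p)$, and this gives $G_{g}=b_{n}(r^{2-n}+A(p))+o(1)$. In the locally conformally flat case I would instead take the representative that is flat near $p$. Then $R\equiv0$ there, so $G_{g}-b_{n}r^{2-n}$ is harmonic near $p$ and hence smooth, and the same conclusion holds.

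\textbf{Cases ii) and iii).} For $n\ge6$ we must subtract further terms. Split the quadratic form $R_{,ij}x^{i}x^{j}$ into its trace part $\frac{\Delta R(p)}{n}r^{2}=-\frac{|W|^{2}}{6n}r^{2}$ and a traceless, i.e.\ harmonic, homogeneous quadratic $H(x)$. Then look for correctors of the form $\alpha\,|W|^{2}r^{4-n}$ and $\beta\,H(x)r^{2-n}$ (up to $O(r^{N-n})$, $\Delta_{g}$ acts on them as the flat Laplacian does, by the same coordinate property), and solve the flat equation
\[
\frac{4(n-1)}{n-2}\Delta\big(\alpha |W|^{2}r^{4-n}+\beta H r^{2-n}\big)=b_{n}R\,r^{2-n}
\]
at leading order. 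The identities $\Delta r^{4-n}=2(4-n)r^{2-n}$ and $\Delta(Hr^{2-n})=2(2-n)Hr^{-n}$, the latter valid because $H$ is harmonic of degree 2, determine $\alpha$ and $\beta$. Rewriting $H=R_{,ij}x^{i}x^{j}-\frac{\Delta R}{n}r^{2}$ and collecting terms should yield the coefficients $\frac{n-2}{48(n-1)(n-4)}\cdot\frac{1}{12(n-6)}$ and $-\frac{n-2}{48(n-1)(n-4)}$ of the statement. The remaining right-hand side is $O(r^{5-n})$, and inverting it gives an error $O(r^{7-n})$, which is iii). For $n=6$ the trace part gives $\Delta u=c\,r^{-2}$, which forces $u=\frac{c}{4}\cdot 2\ln r$ instead of a power. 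The traceless part and the remainders are then $O(1)$, which gives the $-\frac{1}{1440}|W|^{2}\ln r$ term of ii).

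The main obstacle is the bookkeeping of constants, namely getting exactly $\frac{1}{1440}$ and $\frac{n-2}{48(n-1)(n-4)}$. This requires careful use of $\Delta R(p)=-|W|^{2}/6$ and of the normalization $b_{n}$, which must be consistent with $L_{g}G=\delta$ for $L_{g}=-\frac{4(n-1)}{n-2}\Delta+R$. I would check these constants first on the single model quadratic $R=a\,x_{1}^{2}$.
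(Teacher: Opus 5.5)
Your overall plan is the right one, and it is essentially how Lee--Parker prove their Lemma 6.4, which the paper simply cites. You work in conformal normal coordinates so that $r^{2-n}$ is $g$-harmonic up to $O(r^{N-n})$, use $R(p)=0$, $\nabla R(p)=0$, $\Delta R(p)=-\frac{1}{6}|W(p)|^2$, subtract explicit correctors, and control the rest by elliptic theory. Your case i) is fine.

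\textbf{The corrector step for ii) and iii) fails as written.} The error term $b_nR\,r^{2-n}=\frac{b_n}{2}\big(\frac{\Delta R(p)}{n}r^{4-n}+H\,r^{2-n}\big)+O(r^{5-n})$ is homogeneous of degree $4-n$. So the correctors must be homogeneous of degree $6-n$, i.e.\ $\alpha|W|^2r^{6-n}$ and $\beta H r^{4-n}$. Your correctors $\alpha|W|^2 r^{4-n}$ and $\beta Hr^{2-n}$ have Laplacians of degree $2-n$, so your matching equation has no solution $(\alpha,\beta)$. They are also inconsistent with your own case ii): there $Hr^{2-n}=Hr^{-4}$ would not be $O(1)$, so in that paragraph you implicitly used the right degree.

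Two identities are also off by a factor $2$. For a harmonic homogeneous polynomial $P_k$ of degree $k$ one has $\Delta(P_k r^a)=a(a+n-2+2k)P_kr^{a-2}$, so $\Delta(Hr^{2-n})=4(2-n)Hr^{-n}$. For $n=6$, $\Delta\ln r=4r^{-2}$, so $\Delta u=c\,r^{-2}$ gives $u=\frac{c}{4}\ln r$, not $\frac{c}{2}\ln r$. Since the content of ii)--iii) is precisely these constants, this is not cosmetic.

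\textbf{The repaired computation does give the stated constants.} Put $\kappa=\frac{n-2}{4(n-1)}$ (only this ratio matters, not the normalization of $G$) and write $G=b_nr^{2-n}(1+\psi_4)+\dots$. At leading order you need $\Delta(r^{2-n}\psi_4)=\frac{\kappa}{2}R_{,ij}x^ix^j\,r^{2-n}$. Use $H=R_{,ij}x^ix^j+\frac{|W|^2}{6n}r^2$ and the identities $\Delta r^{6-n}=4(6-n)r^{4-n}$, $\Delta(Hr^{4-n})=6(4-n)Hr^{2-n}$. The ansatz $\psi_4=\alpha|W|^2r^4+\beta Hr^2$ then gives $\alpha=\frac{\kappa}{48n(n-6)}$ and $\beta=-\frac{\kappa}{12(n-4)}$. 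Since $\frac{1}{48n(n-6)}-\frac{1}{72n(n-4)}=\frac{1}{144(n-4)(n-6)}$, you get
\[
\psi_4=\frac{\kappa}{12(n-4)}\Big(\frac{r^4}{12(n-6)}|W|^2-R_{,ij}x^ix^jr^2\Big),\qquad \frac{\kappa}{12(n-4)}=\frac{n-2}{48(n-1)(n-4)},
\]
which is exactly iii). For $n=6$ the radial equation $\Delta u=-\frac{|W|^2}{360}r^{-2}$ gives $u=-\frac{1}{1440}|W|^2\ln r$, exactly ii), while $\beta Hr^{-2}=O(1)$.

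\textbf{Two smaller points also need attention.}
\begin{itemize}
\item $\Delta_g$ agrees with the flat Laplacian up to $O(r^{N-n})$ only on radial functions. On $\beta Hr^{4-n}$ the discrepancy is $O(r^{6-n})$, coming from $g^{ij}-\delta^{ij}=O(r^2)$. This is harmless, but it belongs in the remainder.
\item For $n=7$, the potential estimate applied to an $O(r^{-2})$ remainder only yields $O(|\ln r|)$, not $O(1)=O(r^{7-n})$. You must observe that the only degree $-2$ term left is $b_nR_3r^{-5}$, with $R_3$ the odd cubic Taylor term of $R$. Writing $R_3=P_3+r^2P_1$ with $P_k$ harmonic, this term is inverted by a bounded degree-$0$ function, since $\Delta(P_3r^{-3})=-24P_3r^{-5}$ and $\Delta(P_1r^{-1})=-6P_1r^{-3}$ when $n=7$. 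Only a radial component could produce a logarithm.
\end{itemize}
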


\noindent
The term $A(p)$ that appears in $i)$, in the Proposition above, is traditionally called the mass (since it is up to a multiplicative constant, equal to the Arnowitt-Deser-Misner (ADM) mass of the blown-up manifold, namely the stereographic projection of $M$ from $p$ using the Green's function of the conformal Laplacian). The sign of $A$ is very important and it has a deep meaning related to the positive mass theorem \cite{S,W} (we also refer the reader to \cite{LP} for a more detailed approach from the perspective of the Yamabe problem). Indeed, in the case of positive Yamabe invariant, one has $A(p)\geq 0$ and $A(p)=0$ if and only if $(M,[g])$ is conformal to the standard sphere $(S^{n},[g_{0}])$.

\section{Functional Setting}

\noindent
We recall here the expression of the energy functional (\ref{modifiedJg}) that we are dealing with
$$ J_{g}(\psi)=\frac{1}{2}\int_{M}\langle D \psi,\psi\rangle   \ dv_{g} -\frac{1}{4}\int_{M\times M} V_{g}(x,y)|\psi(x)|^{2} |\psi(y)|^{2} \ d_{g}v(x) dv_{g}(y) \; .$$
For $\psi\in H^{\frac{1}{2}}$ we will write $\psi = \psi^{+}+\psi^{-} \in H^{+}\oplus H^{-}$. We will state below the main tools that would allow us to estimate the ground state energy of the functional. This was proven in \cite{MMM} following the process introduced in \cite{YX1}.
\begin{proposition}\label{proptau}
There exits a $C^{1}$-map $\tau: H^{+}\to H^{-}$ such that for every $\psi \in H^{+}$ 
$$J_{g}(\psi+h)<J_{g}(\psi+\tau(\psi)), \forall h\in H^{-} , \; h\not=\tau(h).$$
Moreover, $\tau$ satisfies the following properties:
\begin{itemize}
\item[i)] $P^{-}\Big[D_{g}\tau(\psi)-\left(\displaystyle\int_{M}V_{g}(\cdot,y)|\tau(\psi)(y)+\psi(y)|^{2}\ dv(y) \right)(\psi+\tau(\psi))\Big]=0$.
\item[ii)] $\|\tau(\psi)\|_{\frac{1}{2}}^{2}\leq \displaystyle\frac{1}{2} \int_{M\times M}V_{g}(x,y) |\psi(x)|^{2} |\psi(y)|^{2}\ dv(y)dv(x)$.
\item[iii)] If $K(\psi):=\displaystyle\frac{1}{4} \int_{M\times M} V_{g}(x,y) |\psi(x)|^{2} |\psi(y)|^{2} \ dv(x)dv(y)$, then (here $\|\cdot\|_{Op}$ is the operator norm)
 $$\|\nabla \tau(\psi)\|_{Op}\leq \|\nabla^{2}K(\psi+\tau(\psi))\|_{Op}.$$
\item[iv)] Let $\tilde{J}:H^{+}\to \R$ defined by $\tilde{J}(\psi):=J_{g}(\psi+\tau(\psi))$. If $(\psi_{k})_{k}$ is a (PS) sequence of $\tilde{J}$, then $(\psi_{k}+\tau(\psi_{k}))_{k}$ is a (PS)-sequence for $J_{g}$ and
$$\|\nabla J_{g}(\psi)\|=\|\nabla J_{g}(\psi+\tau(\psi))\|, \forall \psi \in H^{+}.$$
\end{itemize}
\end{proposition}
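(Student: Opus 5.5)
The plan follows the reduction scheme of \cite{YX1}, adapted to the convolution nonlinearity. First I would establish the existence of $\tau$ as a maximizer. Fix $\psi\in H^{+}$ and consider the map $\Phi_\psi(h):=J_{g}(\psi+h)$ on $H^{-}$. Since $D_g$ is invertible, we have $\int_M\langle D_g h,h\rangle\,dv_g=-\|h\|_{\frac12}^2$ for $h\in H^-$. The nonlinear term $K(\psi+h)$ is a nonnegative quartic form because $V_g>0$, and it is convex in $h$. Convexity follows from the fact that $V_g=d_nG_g^{\frac{2}{n-2}}$ defines a positive definite kernel: on $\R^n$ the kernel $|x-y|^{-2}$ is a Riesz potential with positive Fourier transform, and on $M$ I would verify positivity of the bilinear form $(f,f')\mapsto\int\!\!\int V_g f f'$ on nonnegative densities, or at least convexity of $t\mapsto K(\psi+h+tk)$. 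If full positive definiteness fails on $M$, I would instead show directly that the second derivative is $\nabla^2K(\phi)[k,k]=\int\!\!\int V_g(|\phi|^2)(x)|k|^2(y)+2\int\!\!\int V_g\langle\phi,k\rangle(x)\langle\phi,k\rangle(y)$, and argue that the first term already dominates in the relevant sense. With convexity in hand, $\Phi_\psi$ is strictly concave, continuous, and coercive ($\to-\infty$) on $H^-$. Together with the Hardy--Littlewood--Sobolev type bound $K(\phi)\le C\|\phi\|_{\frac12}^4$ (from point i) of the kernel asymptotics and the embedding $H^{\frac12}\subset L^{\frac{2n}{n-1}}$), this gives a unique strict maximizer $\tau(\psi)$. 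This is exactly the first claim, read with $h\neq\tau(\psi)$.

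Property i) is then the Euler--Lagrange equation $P^-\nabla J_g(\psi+\tau(\psi))=0$, written out. For ii), I would compare with $h=0$: $J_g(\psi+\tau(\psi))\ge J_g(\psi)$ gives $\frac12\|\psi\|^2-\frac12\|\tau\|^2-K(\psi+\tau)\ge\frac12\|\psi\|^2-K(\psi)$. Since $K\ge0$, this yields $\|\tau(\psi)\|_{\frac12}^2\le 2K(\psi)=\frac12\int\!\!\int V_g|\psi|^2|\psi|^2$, which is the claimed bound.

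The regularity of $\tau$ and property iii) come from the implicit function theorem applied to $F(\psi,h)=P^-\nabla J_g(\psi+h)$. Its $h$-derivative is $-\mathrm{Id}-P^-\nabla^2K(\psi+h)|_{H^-}$, which is invertible, with inverse of norm at most $1$, because $\nabla^2K\ge0$ on $H^-$ by the convexity above. Differentiating the identity $F(\psi,\tau(\psi))=0$ gives $\nabla\tau=-(\mathrm{Id}+P^-\nabla^2K|_{H^-})^{-1}P^-\nabla^2K|_{H^+}$, and hence $\|\nabla\tau\|_{Op}\le\|\nabla^2K(\psi+\tau(\psi))\|_{Op}$. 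Since $K$ is a smooth quartic, $\tau$ is $C^1$.

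For iv), the chain rule together with i) gives $\nabla\tilde J(\psi)=P^+\nabla J_g(\psi+\tau(\psi))$, while the $H^-$ component of $\nabla J_g(\psi+\tau(\psi))$ vanishes. Therefore the two gradient norms coincide, and Palais--Smale sequences of $\tilde J$ lift to Palais--Smale sequences of $J_g$.

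The main obstacle is the convexity of $K$ along $H^-$, that is, positivity of the kernel $V_g$ in the quadratic sense. I would try to derive it from the positivity of the Riesz kernel locally, using conformal invariance to reduce to a metric where this is clear.
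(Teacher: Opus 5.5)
Your skeleton is the same reduction the paper relies on; the paper gives no proof of this proposition, only a pointer to \cite{MMM} and \cite{YX1}. The steps are: maximize $h\mapsto J_g(\psi+h)$ over $H^-$, read off i) as the Euler--Lagrange equation, get ii) by comparing with $h=0$, get $C^1$ regularity and iii) by implicit differentiation with $\|(\mathrm{Id}+P^-\nabla^2K|_{H^-})^{-1}\|\le 1$, and get iv) by the chain rule. You also read the typos correctly ($h\neq\tau(\psi)$, and $\nabla\tilde J(\psi)$ in iv)).

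Existence of a maximizer needs no convexity. $K$ is weakly lower semicontinuous by Fatou, since $V_g\ge 0$ and weakly convergent sequences in $H^{\frac12}$ converge a.e. along subsequences. Also, i) and ii) hold for any maximizer. But several steps rest on $\nabla^2K(\phi)[k,k]\ge 0$:
\begin{itemize}
\item uniqueness of the maximizer, hence the strict inequality and the very definition of $\tau$;
\item invertibility of $\mathrm{Id}+P^-\nabla^2K|_{H^-}$, hence $C^1$ regularity;
\item the bound in iii);
\item iv), through the differentiability of $\tau$.
\end{itemize}
You correctly single this positivity out as the crux, but you do not prove it, and neither of your remedies works.

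The fallback fails. Cauchy--Schwarz on the cross term only gives $2\bigl|\iint V_g\langle\phi,k\rangle(x)\langle\phi,k\rangle(y)\bigr|\le 2\iint V_g|\phi(x)|^2|k(y)|^2$, i.e.\ $\nabla^2K(\phi)[k,k]\ge-\iint V_g|\phi(x)|^2|k(y)|^2$, not $\ge 0$. For $k=f\phi$ with $f$ real, the cross term is exactly $2\iint V_g\,(f|\phi|^2)(x)\,(f|\phi|^2)(y)$. So negative directions of the kernel enter the Hessian directly, and nothing forces the first term to dominate.

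The conformal reduction fails too. Since $V_{\tilde g}\,dv_{\tilde g}(x)\,dv_{\tilde g}(y)=u(x)^{\frac{2n-2}{n-2}}u(y)^{\frac{2n-2}{n-2}}V_g\,dv_g(x)\,dv_g(y)$, positive semidefiniteness of the kernel is an invariant of $(M,[g])$, so no choice of metric in the class can help. Moreover, the local Riesz behaviour $h_n|x-y|^{-2}$ does not control the global quadratic form, whose low-frequency part is governed by the smooth remainder.

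What you actually need is $\iint V_g f(x)f(y)\,dv_g\,dv_g\ge 0$ for all real $f\in L^{\frac{n}{n-1}}(M)$. Together with $V_g\ge 0$, this makes both terms of your Hessian formula nonnegative, and the rest of your argument then goes through. For $n=4$ this is immediate: $V_g=d_4G_g$ is the kernel of $d_4L_g^{-1}$, and $L_g>0$ because $Y(M,[g])>0$. The same holds for the kernel $G_g^s$ of the introduction whenever $P^s_g>0$.

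For $n\ge 5$, however, $V_g=d_nG_g^{\frac{2}{n-2}}$ is a non-integer pointwise power of a positive definite kernel. Schur's product theorem only covers integer powers, so an argument specific to $G_g$ is required. Your proposal does not provide one, and the present paper does not discuss this point either; it only cites. As written, your proof is therefore complete only where this positivity is available, e.g.\ in dimension four.
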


\noindent
As pointed out in \cite{MMM}, the modified functional $\tilde{J}$ has a mountain-pass geometry since $\tilde{J}(0)=J_{g}(0)=0$ and for $\psi \in H^{+}$ with $\|\psi\|_{\frac{1}{2}}=1$, we have
$$\tilde{J}(t\psi)\geq J_{g}(t\psi)=\frac{t^{2}}{2}-\frac{t^{4}}{4}\int_{M} (V_{g}*|\psi|^{2})|\psi|^{2}\ dv.$$
In particular, there exists $t_{0}>0$ and $\nu_{0}>0$ such that 
$$\tilde{J}(t\psi)\geq 0, \forall \;  0\leq t\leq t_{0} \text{ and } \tilde{J}(t_{0}\psi)\geq \nu_{0}.$$
 This structure allows us to find a candidate critical level for the functional $\tilde{J}$ and hence a critical point for $J_{g}$, via the following min-max scheme:
$$\delta_{0}:=\inf_{\psi \in H^{+}\setminus\{0\}}\max_{t>0}\tilde{J}(t\psi).$$
Notice that $\delta_{0}$ is well-defined and $\delta_{0}\geq \nu_{0}>0$. Moreover, it corresponds to the ground-state solutions of $\tilde{J}_{g}$ (if they exist). This ground state can be further characterized on the Nehari manifold
$$\mathcal{M}=\left\{\psi \in H^{+}; \langle \nabla \tilde{J}(\psi),\psi\rangle=0 \right\}; $$
then
\begin{equation}\label{delta0}
\delta_{0}=\inf_{\psi\in \mathcal{M}} \tilde{J}(\psi) \; ,
\end{equation}
as long as $\mathcal{M}\not=\emptyset$. Hence, in order to obtain genuine critical points for $J_{g}$ via variational analysis, we need that $\mathcal{M}\not=\emptyset$ and $\tilde{J}$ satisfies the (PS) condition at the min-max level $\delta_{0}$. This will allow us to state that $\delta_{0}$ is indeed a critical value for $\tilde{J}$ and hence for $J_{g}$. On the other hand, we know from Theorem \ref{thm1} and Corollary \ref{cor1} that the (PS) condition for $\tilde{J}$ and $J_{g}$ is satisfied below the energy level $\overline{Y}(S^{n},[g_{0}])$. Hence we can achieve our objective if we can show that $\delta_{0}<\overline{Y}(S^{n},[g_{0}])$.

\noindent
In the case of the Yamabe problem, the process consists of grafting a standard bubble (Aubin-Talenti test function) and then expand its energy. But in our case, there are some additional subtleties that need to be addressed. Indeed, we are going to apply the process for the functional $\tilde{J}$, then the $\tau$-component of any potential test spinor needs to be controlled. Thus, as it was the case in \cite{MMM}, we will estimate the energy level of a (PS) sequence of $\tilde{J}$ in terms of the energy levels of $J_{g}$.\\
Therefore, we consider a $(PS)_{c}$ sequence $(\psi)_{k}$ for $J_{g}$, namely
$$J_{g}(\psi_{k})\to c>0, \quad \|\nabla J_{g}(\psi_{k})\|\to 0 \; .$$
One can easily verify that $\|\psi_{k}\|_{\frac{1}{2}}$ is bounded; moreover, we have the following properties (\cite{MMM}):
\begin{proposition}\label{prop diff}
Assume that $(\psi_{k})_{k}$ is a $(PS)_{c}$ sequence for $J_{g}$ with $c>0$. We have
\begin{itemize}
\item[i)] $\|\psi_{k}^{-}-\tau(\psi_{k})\|_{\frac{1}{2}}=O\Big(\|\nabla J_{g}(\psi_{k})\|\Big)$.
\item[ii)]$\nabla\tilde{J}(\psi_{k}^{+})\to 0$.
\item[iii)] There exists $t_{k}>0$ such that $t_{k}\psi_{k}^{+}\in \mathcal{M}$. Moreover, $|t_{k}-1|=O\Big(\|\nabla \tilde{J}(\psi_{k}^{+})\|\Big)$.
\end{itemize}
\end{proposition}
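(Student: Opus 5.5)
We need to prove Proposition \ref{prop diff}: properties i)–iii) for a $(PS)_c$ sequence of $J_g$ with $c>0$.

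\textbf{Step 0: boundedness.} Write $K(\psi)$ as in Proposition \ref{proptau}, so $J_g(\psi)=\frac12(\|\psi^+\|_{\frac12}^2-\|\psi^-\|_{\frac12}^2)-K(\psi)$. Since $K$ is homogeneous of degree four, $\langle\nabla K(\psi),\psi\rangle=4K(\psi)$. Hence
$$J_g(\psi_k)-\tfrac12\langle \nabla J_g(\psi_k),\psi_k\rangle=K(\psi_k),$$
which gives $K(\psi_k)\le C+o(1)\|\psi_k\|_{\frac12}$. Test $\nabla J_g(\psi_k)$ against $\psi_k^+-\psi_k^-$. Bound the cross term $\langle\nabla K(\psi_k),\psi_k^\pm\rangle$ using Hölder together with the Hardy–Littlewood–Sobolev inequality for the kernel $V_g\sim |x-y|^{-2}$ (from i)) and the embedding $H^{\frac12}\hookrightarrow L^{\frac{2n}{n-1}}$. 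Specifically, $|\langle\nabla K(\psi),\phi\rangle|\le C K(\psi)^{3/4}\|\phi\|_{\frac12}$. This yields $\|\psi_k\|^2_{\frac12}\le C+C\|\psi_k\|_{\frac12}^{3/2}+o(1)\|\psi_k\|_{\frac12}$, so the sequence is bounded.

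\textbf{Step i).} Fix $\phi=\psi_k^+$. The map $h\mapsto J_g(\phi+h)$ on $H^-$ is strictly concave, since $-\frac12\|h\|^2-K(\phi+h)$ with $K$ convex in $h$: $K$ is the integral of the positive-definite-kernel quadratic form applied to $|\psi|^2$, which is convex in $\psi$ because $V_g$ is a positive kernel, being a power of the Green's function up to conformal invariance. The maximizer is $\tau(\phi)$. Then use the strong concavity estimate
$$\langle \nabla_{H^-}J_g(\phi+h_1)-\nabla_{H^-}J_g(\phi+h_2),h_1-h_2\rangle\le -\|h_1-h_2\|^2_{\frac12}$$
with $h_1=\psi_k^-$ and $h_2=\tau(\psi_k^+)$. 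Note that $\nabla_{H^-}J_g(\phi+\tau(\phi))=0$ by Proposition \ref{proptau} i). This gives $\|\psi_k^--\tau(\psi_k^+)\|_{\frac12}\le\|P^-\nabla J_g(\psi_k)\|$.

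\textbf{Step ii).} By the chain rule, $\nabla\tilde J(\phi)=P^+\nabla J_g(\phi+\tau(\phi))$, because the $H^-$-component vanishes. Then
$$\|\nabla\tilde J(\psi_k^+)\|\le\|\nabla J_g(\psi_k)\|+\|\nabla J_g(\psi_k^++\tau(\psi_k^+))-\nabla J_g(\psi_k)\|.$$
The second term is $O(\|\psi_k^--\tau(\psi_k^+)\|)$, since $\nabla J_g$ is locally Lipschitz on bounded sets (the cubic HLS estimate again, using Step 0 and Proposition \ref{proptau} ii) to bound $\tau$). By Step i) both terms tend to zero.

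\textbf{Step iii).} Consider $f_k(t)=\tilde J(t\psi_k^+)$, so that $f_k'(t)=\langle\nabla\tilde J(t\psi_k^+),\psi_k^+\rangle$. The membership $t\psi_k^+\in\mathcal M$ is equivalent to $t f_k'(t)=0$. From the standard Nehari structure established in \cite{YX1,MMM}, $f_k'(t)/t$ is strictly decreasing on $t>0$ with a unique zero $t_k$. The main obstacle is a quantitative nondegeneracy: one needs $f_k''(t)\le -c<0$ near $t=1$ uniformly in $k$. To get this:

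- First, $\|\psi_k^+\|_{\frac12}\ge c_0>0$. Otherwise $J_g(\psi_k)\le\frac12\|\psi_k^+\|^2\to0$, contradicting $c>0$.
- Next, using $\frac{d}{dt}(t\psi+\tau(t\psi))$ and the Nehari identity, compute $f_k''(1)=-2\|\psi_k^+\|^2+o(1)$ or a similar negative quantity bounded away from zero. Here one uses that $K(\psi_k)\ge c+o(1)$ and the degree-four homogeneity.

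Then the mean value theorem, applied with $|f_k'(1)|\le\|\nabla\tilde J(\psi_k^+)\|\,\|\psi_k^+\|$, gives $|t_k-1|=O(\|\nabla\tilde J(\psi_k^+)\|)$.
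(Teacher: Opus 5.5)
\textbf{Steps i) and ii).} These are the standard reduction argument and are fine (the paper itself gives no proof and only cites \cite{MMM}). Two of your justifications need correcting.

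First, the bound $|\langle\nabla K(\psi),\phi\rangle|\le CK(\psi)^{3/4}\|\phi\|_{\frac12}$ does not follow from H\"older and Hardy--Littlewood--Sobolev alone, since these only bound $K$ from above. It follows from two applications of Cauchy--Schwarz:
\begin{itemize}
\item with respect to the weight $V_g*|\psi|^2\ge0$;
\item with respect to the bilinear form $(f,h)\mapsto\int\int V_g f h$, which requires $V_g$ to be a positive definite kernel.
\end{itemize}
Second, convexity of $K$ does not follow from ``$V_g$ being a power of the Green's function up to conformal invariance''. Fractional powers do not preserve positive definiteness in general, and conformal invariance only transports the property within the class. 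Convexity of $K$ is, however, exactly what underlies Proposition \ref{proptau} (the strict maximizer and the bound iii)), so state it as the structural input taken from that framework. Boundedness of $(\psi_k)$ is in any case asserted in the paper.

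\textbf{The gap is in iii).} The uniform bound $f_k''\le -c$ near $t=1$ is the whole content of the quantitative claim, and you neither prove it nor identify the right quantity. Your guess $f_k''(1)=-2\|\psi_k^+\|^2+o(1)$ would hold only if $\tau$ were absent. But $\tau(\psi_k^+)\approx\psi_k^-$ need not tend to zero, and the $t$-dependence of $\tau(t\psi)$ contributes positive terms.

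Set $\phi=\psi+\tau(\psi)$ and $A=I+P^-\nabla^2K(\phi)|_{H^-}$. Differentiating $P^-\nabla J_g(t\psi+\tau(t\psi))=0$ in $t$, and using $\tau(\psi)=-P^-\nabla K(\phi)$ and $\nabla^2K(\phi)[\phi]=3\nabla K(\phi)$, one finds
\[
\langle\nabla^2\tilde J(\psi)\psi,\psi\rangle=3\langle\nabla\tilde J(\psi),\psi\rangle-2\|\psi\|_{\frac12}^2+2\|\tau(\psi)\|_{\frac12}^2+4\langle\tau(\psi),A^{-1}\tau(\psi)\rangle_{\frac12}.
\]
The crude bound $\langle\tau,A^{-1}\tau\rangle\le\|\tau\|^2$ does not make this negative.

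What closes the argument is Cauchy--Schwarz for the nonnegative form $\nabla^2K(\phi)$. For $h\in H^-$ we have $\langle\tau,h\rangle_{\frac12}=-\frac13\nabla^2K(\phi)[\phi,h]$, and also $\nabla^2K(\phi)[\phi,\phi]=12K(\phi)$. These give $\langle\tau,A^{-1}\tau\rangle\le\frac43K(\phi)$, and hence
\[
\frac{d}{dt}\Big(\frac{f_k'(t)}{t}\Big)\le-\frac{8}{3t^{3}}\,K\big(t\psi_k^++\tau(t\psi_k^+)\big).
\]
This yields the monotonicity of $f_k'(t)/t$ that you only quote from the literature. It also yields a bound $\le -c'$ on a fixed interval $[1-\rho,1+\rho]$, for two reasons:
\begin{itemize}
\item $K(\psi_k^++\tau(\psi_k^+))\to c>0$, by your Step 0 identity together with i);
\item $t\mapsto K(t\psi_k^++\tau(t\psi_k^+))$ is uniformly Lipschitz on $[\frac12,2]$, by Proposition \ref{proptau} ii)--iii) and boundedness.
\end{itemize}
Since $|f_k'(1)|\le\|\nabla\tilde J(\psi_k^+)\|\,\|\psi_k^+\|\to0$, the unique zero $t_k$ lies in that interval, and the mean value theorem gives $|t_k-1|=O(\|\nabla\tilde J(\psi_k^+)\|)$. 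Without this estimate or an equivalent one, both the location of $t_k$ near $1$ and the linear rate remain unproved.
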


\noindent
This last Proposition tells us in particular that if we have a $(PS)_{c}$ sequence for $J_{g}$ then $\mathcal{M}\not= \emptyset$ and hence $\delta_{0}$ is well-defined. Next, we can exhibit an upper-bound for $\delta_{0}$ in terms of the energy of the 
$(PS)_{c}$ sequence $(\psi_{k})_{k}$ (\cite{MMM})

\begin{proposition}\label{prop exp}
Assume that $(\psi_{k})_{k}$ is a $(PS)_{c}$ sequence for $J_{g}$ with $c>0$. Then
$$\delta_{0}\leq J_{g}(\psi_{k})+O\Big(\|\nabla J_{g}(\psi_{k})\|^{2}\Big).$$
In particular, if $J_{g}$ satisfies the (PS) condition at the level $\delta_{0}$ then it has a critical point $\psi$ at that level.
\end{proposition}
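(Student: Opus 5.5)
The statement to prove is Proposition \ref{prop exp}: for a $(PS)_{c}$ sequence $(\psi_{k})_{k}$ of $J_{g}$ with $c>0$, we have $\delta_{0}\leq J_{g}(\psi_{k})+O(\|\nabla J_{g}(\psi_{k})\|^{2})$.

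The plan is to pass from $\psi_{k}$ to the point $t_{k}\psi_{k}^{+}\in\mathcal{M}$ given by Proposition \ref{prop diff} iii). Then we compare energies through the chain
$$\delta_{0}\leq \tilde{J}(t_{k}\psi_{k}^{+}) \le \tilde{J}(\psi_{k}^{+})+\ldots \le J_{g}(\psi_{k})+\ldots .$$
The first inequality is simply the characterization (\ref{delta0}) of $\delta_{0}$ as an infimum over $\mathcal{M}$. For the second, note that $\tilde{J}$ is $C^{1}$, indeed $C^{2}$, on bounded sets, with locally bounded second derivative by Proposition \ref{proptau} iii). A Taylor expansion around $\psi_{k}^{+}$ then gives
$$\tilde{J}(t_{k}\psi_{k}^{+})=\tilde{J}(\psi_{k}^{+})+(t_{k}-1)\langle\nabla\tilde{J}(\psi_{k}^{+}),\psi_{k}^{+}\rangle+O(|t_{k}-1|^{2}).$$
Since $\|\psi_{k}\|_{\frac12}$ is bounded and $|t_{k}-1|=O(\|\nabla\tilde{J}(\psi_{k}^{+})\|)$, both correction terms are $O(\|\nabla\tilde J(\psi_k^+)\|^{2})$.

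Next I need $\|\nabla\tilde{J}(\psi_{k}^{+})\|=O(\|\nabla J_{g}(\psi_{k})\|)$. By Proposition \ref{proptau} iv), this norm equals $\|\nabla J_{g}(\psi_{k}^{+}+\tau(\psi_{k}^{+}))\|$. Since $\nabla J_{g}$ is locally Lipschitz, this is at most $\|\nabla J_{g}(\psi_{k})\|+C\|\psi_{k}^{-}-\tau(\psi_{k}^{+})\|_{\frac12}$. By Proposition \ref{prop diff} i), the last term is $O(\|\nabla J_{g}(\psi_{k})\|)$.

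For the third link, use $\tilde{J}(\psi_{k}^{+})=J_{g}(\psi_{k}^{+}+\tau(\psi_{k}^{+}))$ and expand $J_{g}$ around $\psi_{k}$ in the direction $h_{k}=\tau(\psi_{k}^{+})-\psi_{k}^{-}\in H^{-}$:
$$J_{g}(\psi_{k}+h_{k})=J_{g}(\psi_{k})+\langle\nabla J_{g}(\psi_{k}),h_{k}\rangle+O(\|h_{k}\|^{2}).$$
Because $\|h_{k}\|_{\frac12}=O(\|\nabla J_{g}(\psi_{k})\|)$, the error is $O(\|\nabla J_{g}(\psi_{k})\|^{2})$. Combining the three links yields the claimed inequality.

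For the final assertion, suppose (PS) holds at level $\delta_{0}$. By Ekeland's principle, or a deformation argument on the mountain-pass structure of $\tilde{J}$ together with the Nehari characterization, we can produce a (PS) sequence for $\tilde{J}$ at level $\delta_{0}$. Proposition \ref{proptau} iv) turns it into a (PS) sequence $\psi_{k}+\tau(\psi_{k})$ for $J_{g}$ at level $\delta_{0}$. Its strong limit is then a critical point of $J_{g}$ at that level.

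The main obstacle is bookkeeping the uniformity of the $C^{2}$ bounds. One needs $\nabla^{2}K$, and hence $\nabla\tau$ and $\nabla^{2}\tilde J$, to be bounded on bounded subsets of $H^{\frac12}$. This follows from the Hardy–Littlewood–Sobolev inequality, since $V_{g}\lesssim|x-y|^{-2}$ makes $K$ a bounded quartic form on $L^{\frac{2n}{n-1}}$, which is continuously embedded in $H^{\frac12}$.
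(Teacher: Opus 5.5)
The paper does not prove this proposition itself (it quotes it from \cite{MMM}), but your argument is correct and is the standard one in the reduction framework of \cite{YX1}: pass to $t_k\psi_k^+\in\mathcal{M}$, expand $\tilde J$ along the ray using $|t_k-1|=O(\|\nabla\tilde J(\psi_k^+)\|)=O(\|\nabla J_g(\psi_k)\|)$, then compare $\tilde J(\psi_k^+)=J_g(\psi_k+h_k)$ with $J_g(\psi_k)$ to second order via Proposition \ref{prop diff} i), with the final clause following from the mountain-pass/Nehari structure of $\tilde J$ and Proposition \ref{proptau} iv). One small slip: the embedding your Hardy--Littlewood--Sobolev bound on $\nabla^2 K$ actually needs is $H^{\frac12}\hookrightarrow L^{\frac{2n}{n-1}}$, not the reverse.
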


\noindent
It is clear now that in order to have a precise upper-bound for $\delta_{0}$, we need a parameterized test function that behaves as a $(PS)_{c}$ sequence and such that the decay of its gradient is fast enough so that it does not interfere with the energy expansion. This last part is not present in the classical Yamabe problem and that is because the minimization is more direct and does not go through an alternative functional $\tilde{J}$.

\section{Test Spinor and Energy Estimates}

\noindent
In this section we will start by constructing an adequate test function and then estimating its gradient and energy. The test function that we will be using is similar to the one introduced in \cite{MMM}. But compared to \cite{MMM} a more precise expansion is required. Notice that in \cite{MMM}, we only needed to expand up to first order to have a signed quantity allowing us to go below the critical level and that is because of the presence of the quadratic Brezis-Nirenberg term in the functional. In \cite{YX1}, the authors needed to go up to order 4 in order to obtain a signed term. This will be our objective when the Weyl tensor does not vanish. In the locally conformally flat case, we need to expand to an even higher order. It is crucial here to keep track of the growth of the gradient term of $J_{g}$ so that it does not interfere with the signed quantities that we need. The construction of the test spinor is very similar to the one in \cite{MMM,YX1}, but for the sake of completion, we will write here its explicit form.

\noindent
Consider a constant spinor $\Psi_{0}$ on $\R^{n}$ such that $|\Psi_{0}|^{2}=a_{n}$, where $a_{n}$ is  a constant satisfying
$$a_{n}^{\frac{n}{n-1}}\omega_{n}=2^{n}\overline{Y}(S^{n},[g_{0}])c_{n}^{-\frac{1}{n-1}},$$
and 
\begin{equation}\label{cn}
c_{n}=\frac{\lambda^{+}(S^{n}, [g_{0}])^{n-2}}{ Y_{\frac{n}{2}-1}(S^{n}, [g_{0}])}.
\end{equation}
We define then the spinor 
\begin{equation}\label{psi}
\Psi=\Big(\frac{1}{1+|x|^{2}}\Big)^{\frac{n}{2}}(1-x)\cdot \Psi_{0},
\end{equation}
so that if $f(r)=\frac{1}{1+r^{2}}$, then $|\Psi(x)|^{2}=a_{n}\big(f(|x|)\big)^{n-1}$. Notice that 
$$D_{\R^{n}}\Psi=\frac{n}{2}f\Psi.$$
The spinor $\Psi$ is the standard bubble that appear in Theorem \ref{thm1}. It is also the same Spinor that appear in the spinorial Yamabe problem \cite{YX1}. We fix $\delta>0$ so that $2\delta<i(M)$, the injectivity radius of $M$. We let $\eta$ to be a smooth function on $\R^{n}$ with support in $B_{2\delta}(0)=:B_{2\delta}$ such that $\eta=1$ on $B_{\delta}(0)=:B_{\delta}$. Now, we can define the spinor 
\begin{equation}\label{psiepsilon}
\psi_{\varepsilon}(x)=\eta(x)\varepsilon^{-\frac{n-1}{2}}\Psi\left(\frac{x}{\varepsilon}\right)=\eta(x) \Psi_{\varepsilon}(x).
\end{equation}
Next, we use the Bourguignon-Gauduchon \cite{Bour} trivialization in order to graft the spinor $\psi_{\varepsilon}$ on $M$. Indeed,  we fix $p_{0}\in M$ and $(x_{1},\cdots,x_{n})$ local normal coordinates around $p_{0}$ provided by the exponential map $\exp_{p_{0}}$. That is, there exists a neighborhood $U\subset T_{p_{0}}M=\R^{n}$ and a neighborhood $V\subset M$, such that $\exp_{p_{0}}:U\to V$ is a diffeomorphism.

\noindent
Let $G(p)=(g_{ij}(p))_{ij}$ be the metric at $p$ and $B=G^{-\frac{1}{2}}$. Notice that $B$ is well defined since $G$ is symmetric and positive definite. With these notations, we have that $B^{*}g=g_{\R^{n}}$. Therefore, $B$ defines an isometry as a map $B(p):(T_{\exp^{-1}_{p_{0}}p}U,g_{\R^{n}})\to (T_{p}V,g(p))$. Hence, given an oriented frame $(y_{1},\cdots, y_{n})$ on $U$, we obtain a natural oriented frame on $V$ by taking $(By_{1},\cdots, B y_{n})$. Thus, one has an isomorphism of the $SO(n)$-principal bundle induced by the map $\Phi(y_{1},\cdots, y_{n})=(By_{1},\cdots, B y_{n})$ as described in the diagram below:

\begin{center}

\tikzset{every picture/.style={line width=0.75pt}} 

\begin{tikzpicture}[x=0.75pt,y=0.75pt,yscale=-1,xscale=1]

\draw    (211,120) -- (376,120.99) ;
\draw [shift={(378,121)}, rotate = 180.34] [color={rgb, 255:red, 0; green, 0; blue, 0 }  ][line width=0.75]    (10.93,-3.29) .. controls (6.95,-1.4) and (3.31,-0.3) .. (0,0) .. controls (3.31,0.3) and (6.95,1.4) .. (10.93,3.29)   ;
\draw    (212,212) -- (377,212.99) ;
\draw [shift={(379,213)}, rotate = 180.34] [color={rgb, 255:red, 0; green, 0; blue, 0 }  ][line width=0.75]    (10.93,-3.29) .. controls (6.95,-1.4) and (3.31,-0.3) .. (0,0) .. controls (3.31,0.3) and (6.95,1.4) .. (10.93,3.29)   ;
\draw    (182,133.5) -- (182.97,193.5) ;
\draw [shift={(183,195.5)}, rotate = 269.08] [color={rgb, 255:red, 0; green, 0; blue, 0 }  ][line width=0.75]    (10.93,-3.29) .. controls (6.95,-1.4) and (3.31,-0.3) .. (0,0) .. controls (3.31,0.3) and (6.95,1.4) .. (10.93,3.29)   ;
\draw    (395,134.5) -- (395.97,194.5) ;
\draw [shift={(396,196.5)}, rotate = 269.08] [color={rgb, 255:red, 0; green, 0; blue, 0 }  ][line width=0.75]    (10.93,-3.29) .. controls (6.95,-1.4) and (3.31,-0.3) .. (0,0) .. controls (3.31,0.3) and (6.95,1.4) .. (10.93,3.29)   ;

\draw (117,102) node [anchor=north west][inner sep=0.75pt]    {$P_{SO}( U,g_{\mathbb{R}^{n}})$};
\draw (384,102) node [anchor=north west][inner sep=0.75pt]    {$P_{SO}( V,g) \subset P_{SO}( M,g)$};
\draw (127,200) node [anchor=north west][inner sep=0.75pt]    {$U\subset T_{p_{0}} M$};
\draw (385,202) node [anchor=north west][inner sep=0.75pt]    {$V\subset M$};
\draw (281,94) node [anchor=north west][inner sep=0.75pt]    {$\Phi $};
\draw (268,181) node [anchor=north west][inner sep=0.75pt]    {$\exp_{p_{0}}$};

\end{tikzpicture}

\end{center}

\noindent
The map $\Phi$ commutes with the right action of $SO(n)$ and hence it induces an isomorphism of spin structures:

\begin{center}
\tikzset{every picture/.style={line width=0.75pt}} 

\begin{tikzpicture}[x=0.75pt,y=0.75pt,yscale=-1,xscale=1]

\draw    (211,120) -- (376,120.99) ;
\draw [shift={(378,121)}, rotate = 180.34] [color={rgb, 255:red, 0; green, 0; blue, 0 }  ][line width=0.75]    (10.93,-3.29) .. controls (6.95,-1.4) and (3.31,-0.3) .. (0,0) .. controls (3.31,0.3) and (6.95,1.4) .. (10.93,3.29)   ;
\draw    (163,211) -- (422,211.99) ;
\draw [shift={(424,212)}, rotate = 180.22] [color={rgb, 255:red, 0; green, 0; blue, 0 }  ][line width=0.75]    (10.93,-3.29) .. controls (6.95,-1.4) and (3.31,-0.3) .. (0,0) .. controls (3.31,0.3) and (6.95,1.4) .. (10.93,3.29)   ;
\draw    (105,128.5) -- (105.97,188.5) ;
\draw [shift={(106,190.5)}, rotate = 269.08] [color={rgb, 255:red, 0; green, 0; blue, 0 }  ][line width=0.75]    (10.93,-3.29) .. controls (6.95,-1.4) and (3.31,-0.3) .. (0,0) .. controls (3.31,0.3) and (6.95,1.4) .. (10.93,3.29)   ;
\draw    (469,132.5) -- (469.97,192.5) ;
\draw [shift={(470,194.5)}, rotate = 269.08] [color={rgb, 255:red, 0; green, 0; blue, 0 }  ][line width=0.75]    (10.93,-3.29) .. controls (6.95,-1.4) and (3.31,-0.3) .. (0,0) .. controls (3.31,0.3) and (6.95,1.4) .. (10.93,3.29)   ;

\draw (7,102) node [anchor=north west][inner sep=0.75pt]    {$U\times Spin( n) =P_{Spin}( U,g_{\mathbb{R}^{n}})$};
\draw (384,102) node [anchor=north west][inner sep=0.75pt]    {$P_{Spin}( V,g) \subset P_{Spin}( M,g)$};
\draw (75,200) node [anchor=north west][inner sep=0.75pt]    {$U\subset T_{p_{0}} M$};
\draw (431,199) node [anchor=north west][inner sep=0.75pt]    {$V\subset M$};
\draw (275,89) node [anchor=north west][inner sep=0.75pt]    {$\tilde{\Phi }$};
\draw (268,181) node [anchor=north west][inner sep=0.75pt]    {$\exp_{p_{0}}$};

\end{tikzpicture}
\end{center}

\noindent
This leads to an isomorphism between the spin bundles $\Sigma_{g_{\R^{n}}} U$ and $\Sigma_{g} V$. If we let $e_{i}=B(\partial_{x_{i}})$ we then obtain an orthonormal frame $(e_{1},\cdots e_{n})$ of $(TV,g)$. We let $\nabla$ and $\overline{\nabla}$, respectively the Levi-Civita connections on $(TU,g_{\R^{n}})$ and $(TV,g)$. We will keep the same notations for their natural lifts to $\Sigma_{g_{\R^{n}}} U$ and $\Sigma_{g} V$. From now on, if $H\to U$ (resp. $H\to V$) is a smooth bundle over $U$ (resp. over $V$), we let $\Gamma(H)$ be the space of smooth sections of $H$. The Clifford multiplications then satisfy
$$e_{i}\cdot \overline{\psi}=B(\partial_{x_{i}})\cdot \overline{\psi}=\overline{\partial_{x_{i}}\cdot \psi},$$
where here we use the identification that any $\psi \in \Gamma (\Sigma_{g_{\R^{n}}} U)$ corresponds via the previously defined isomorphism to a spinor $\overline{\psi}\in \Gamma(\Sigma_{g} V)$. If $D$ and $\overline{D}$ are the Dirac operators acting on $\Gamma (\Sigma_{g_{\R^{n}}} U)$ and $\Gamma (\Sigma_{g} V)$, then we have for $\psi \in \Gamma(\Sigma_{g_{\R^{n}}} U)$ 
$$\overline{D} \; \overline{\psi}=\overline{D\psi} +\Omega\cdot \overline{\psi}+X\cdot \overline{\psi}+\sum_{i,j}(b_{ij}-\delta_{ij})\overline{\partial_{x_{i}}\cdot \nabla_{\partial_{x_{j}}}\psi},$$
where here, the $b_{ij}$ are such that $e_{i}=\sum_{j} b_{ij}\partial_{x_{j}}$, $\Omega\in \Gamma(Cl(TV))$ and $X\in \Gamma(TV)$ are defined by
$$\Omega=\frac{1}{4} \sum_{i,j,k;\\ i\not=j\not=k\not=i} \sum_{\alpha,\beta} b_{i\alpha}(\partial_{x_{\alpha}}b_{j\beta})b^{-1}_{\beta k}e_{i}\cdot e_{j}\cdot e_{k},$$
and
$$X=\frac{1}{4}\sum_{i,k}(\overline{\Gamma}_{ik}^{i}-\overline{\Gamma}_{ii}^{k})e_{k}=\frac{1}{2}\sum_{i,k}\overline{\Gamma}_{ik}^{i}e_{k}.$$
Using the identification between $x\in \R^{n}$ and $p=\exp_{p_{0}}x\in M$, we have the following asymptotic expansion in normal coordinates, proved in \cite{YX1}:

\begin{proposition}\label{propest1}
In local coordinates we have
\begin{itemize}
\item[i)] $b_{ij}-\delta_{ij}=-\frac{1}{6}R_{i\alpha\beta j}x^{\alpha}x^{\beta}-\frac{1}{12}R_{i\alpha\beta j,k}x^{\alpha}x^{\beta}x^{k}+O(r^{4})$.
\item[ii)] $X=-\sum\limits_{k=1}^{n}\Big(\frac{1}{4}R_{\alpha k}x^{\alpha}+\frac{1}{6}R_{\alpha k,\beta}x^{\alpha}x^{\beta}+O(r^{3})\Big)e_{k}$.
\item[iii)] $\Omega=-\frac{1}{144}\sum\limits_{\begin{array}{cc}
i,j,k\\
 i\not=j\not=k\not=i
\end{array}}\sum\limits_{\ell}R_{\ell \beta \gamma k}(R_{ji\alpha\ell}+R_{j\ell\alpha i})x^{\alpha}x^{\beta}x^{\gamma}e_{i}\cdot e_{j}\cdot e_{k}+O(r^{4})$.
\end{itemize}
\end{proposition}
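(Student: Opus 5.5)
The statement to prove is Proposition \ref{propest1}, the expansion of $b_{ij}$, $X$ and $\Omega$ in normal coordinates. The plan is to derive all three items from the classical Taylor expansion of the metric in normal coordinates centered at $p_{0}$:
$$g_{ij}=\delta_{ij}+\frac{1}{3}R_{i\alpha\beta j}x^{\alpha}x^{\beta}+\frac{1}{6}R_{i\alpha\beta j,k}x^{\alpha}x^{\beta}x^{k}+O(r^{4}),$$
with the sign convention for which $R_{i\alpha\alpha i}$ gives the sectional curvature.

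\textbf{Item i).} Since $B=G^{-1/2}$ and $G=I+H$ with $H=O(r^{2})$, a power series gives
$$B=I-\tfrac12 H+\tfrac38 H^{2}+\dots=I-\tfrac12 H+O(r^{4}).$$
Because $e_{i}=B(\partial_{x_i})$, the coefficients $b_{ij}$ are the entries of $B$ (up to transposition, which is harmless since $B$ is symmetric). This immediately gives
$$b_{ij}-\delta_{ij}=-\tfrac16 R_{i\alpha\beta j}x^{\alpha}x^{\beta}-\tfrac1{12}R_{i\alpha\beta j,k}x^{\alpha}x^{\beta}x^{k}+O(r^{4}).$$

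\textbf{Item ii).} Next compute the Christoffel symbols from the same metric expansion:
$$\Gamma^{k}_{ij}=\tfrac12 g^{kl}(\partial_i g_{jl}+\partial_j g_{il}-\partial_l g_{ij}),$$
whose leading terms are linear in $x$ (curvature) and quadratic in $x$ (first derivatives of curvature). The connection coefficients $\overline{\Gamma}^{k}_{ij}=g(\overline{\nabla}_{e_i}e_j,e_k)$ in the frame $e_i$ are then obtained by combining these with $\partial b$ and $b$. In the trace $\sum_i\overline{\Gamma}^{i}_{ik}$, the Riemann tensor contracts to the Ricci tensor: the linear term produces $-\frac14 R_{\alpha k}x^{\alpha}$ after multiplying by $\frac12$. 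The quadratic term produces $-\frac16 R_{\alpha k,\beta}x^{\alpha}x^{\beta}$ once the contracted second Bianchi identity and the symmetrization in $(\alpha,\beta)$ are used. Products of two curvature terms are $O(r^{3})$ and are absorbed in the remainder.

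\textbf{Item iii).} For $\Omega$, insert
$$b_{i\alpha}=\delta_{i\alpha}+O(r^{2}),\qquad \partial_{x_\alpha}b_{j\beta}=-\tfrac16\bigl(R_{j\alpha\gamma\beta}+R_{j\gamma\alpha\beta}\bigr)x^{\gamma}+O(r^{2}),\qquad b^{-1}_{\beta k}=\delta_{\beta k}+O(r^{2}).$$
The zeroth-order part is linear in $x$ and, after summing over distinct $i,j,k$, is antisymmetrized by the Clifford product $e_i\cdot e_j\cdot e_k$. It vanishes by the first Bianchi identity, since $R_{j[i\gamma k]}=0$. Hence $\Omega$ starts at cubic order, arising from the product of the $O(r^{2})$ part of $b_{i\alpha}$ (or of $b^{-1}$) with the linear part of $\partial b$. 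This yields the coefficient $-\frac1{144}=\frac14\cdot\frac16\cdot\frac16$ times the displayed curvature combination, with remainder $O(r^{4})$.

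The main obstacle is the bookkeeping in items ii) and iii). For ii), matching the exact coefficient $\frac16$ of $R_{\alpha k,\beta}$ requires carefully symmetrizing and applying Bianchi. For iii), one must verify the linear-term cancellation and track which cubic products survive the antisymmetrization. I would do iii) last and check it against the computation in \cite{YX1}.
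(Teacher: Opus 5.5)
\textbf{Comparison with the paper.} The paper gives no argument for this proposition; it quotes \cite{YX1}. Deriving everything from the normal-coordinate expansion of $g$ is therefore a different, self-contained route, and for i) and ii) it works.

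\textbf{Items i) and ii).} First fix a sign in your setup. The expansion $g_{ij}=\delta_{ij}+\frac13R_{i\alpha\beta j}x^\alpha x^\beta+\cdots$ holds in the convention where $R_{i\alpha i\alpha}$ is the sectional curvature (equivalently $R_{kl}=\sum_iR_{ikil}$), not $R_{i\alpha\alpha i}$. With the convention you state, that expansion has the wrong sign (test it on the round sphere), and ii) would come out with the opposite sign. With that fixed, i) is complete.

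For ii), use $\sum_ib_{i\alpha}b_{i\gamma}=g^{\alpha\gamma}$ to get $\sum_i\overline{\Gamma}^i_{ik}=\mathrm{div}_g e_k=\partial_{x_j}b_{kj}+b_{kj}\,\partial_{x_j}\log\sqrt{\det g}$, with $\sqrt{\det g}=1-\frac16R_{\alpha\beta}x^\alpha x^\beta-\frac1{12}R_{\alpha\beta,\gamma}x^\alpha x^\beta x^\gamma+O(r^4)$.
\begin{itemize}
\item The linear parts give $(-\frac16-\frac13)R_{k\alpha}x^\alpha$.
\item In the quadratic parts, the contracted Bianchi identity $\sum_jR_{k\alpha\beta j,j}=R_{\beta k,\alpha}-R_{\alpha\beta,k}$ cancels the $R_{\alpha\beta,k}$ terms and leaves $-\frac13R_{k\alpha,\beta}x^\alpha x^\beta$.
\end{itemize}
Halving gives exactly ii), so your coefficients are right.

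\textbf{Item iii) has a gap.} Superscripts below denote homogeneous parts in $x$.

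First, the identity you cite, $R_{j[i\gamma k]}=0$, alternates the wrong indices. What $e_i\cdot e_j\cdot e_k$ alternates is $(i,j,k)$.

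Second, and more seriously, you never examine three contributions:
\begin{itemize}
\item the quadratic term $\frac14\sum\partial_{x_i}b^{(3)}_{jk}\,e_i\cdot e_j\cdot e_k$, which comes from the $\nabla R$ part of $b$;
\item the cubic contribution of $\partial b^{(4)}$;
\item the cubic contribution of $b^{(2)}_{i\alpha}$.
\end{itemize}
So ``hence $\Omega$ starts at cubic order'' does not follow.

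The missing observation is that $B$ is symmetric. Then $\sum_\alpha b_{i\alpha}\partial_{x_\alpha}b_{jk}$ is symmetric in $(j,k)$, and $e_i\cdot e_j\cdot e_k$ annihilates it for distinct indices. Hence, exactly,
\[
\Omega=\frac14\sum_{i\neq j\neq k\neq i}\ \sum_{\alpha,\beta}b_{i\alpha}\,(\partial_{x_\alpha}b_{j\beta})\,\bigl(b^{-1}_{\beta k}-\delta_{\beta k}\bigr)\,e_i\cdot e_j\cdot e_k ,
\]
which is visibly $O(r^3)$. Its cubic part comes only from
\[
\partial_{x_i}b^{(2)}_{j\ell}=-\tfrac16(R_{ji\alpha\ell}+R_{j\alpha i\ell})x^\alpha,\qquad b^{-1}_{\ell k}-\delta_{\ell k}=\tfrac16R_{\ell\beta\gamma k}x^\beta x^\gamma+O(r^3).
\]
So the constant is $\frac14\cdot(-\frac16)\cdot\frac16$; your product $\frac14\cdot\frac16\cdot\frac16$ has the wrong sign.

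Third, the bracket this produces is $R_{ji\alpha\ell}+R_{j\alpha i\ell}$, not the displayed $R_{ji\alpha\ell}+R_{j\ell\alpha i}$. The two agree only after alternation in $(i,j,k)$. Set $U_{ij\ell}=R_{ji\alpha\ell}x^\alpha$ and $W_{ij\ell}=R_{j\ell\alpha i}x^\alpha$. The first Bianchi identity gives
\[
R_{j\alpha i\ell}x^\alpha=U_{ij\ell}-W_{ij\ell},\qquad U_{ij\ell}=W_{ij\ell}-W_{ji\ell}.
\]
Contract with $R_{\ell\beta\gamma k}x^\beta x^\gamma$ and alternate in $(i,j,k)$. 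Both brackets then give three times the alternation of $\sum_\ell W_{ij\ell}R_{\ell\beta\gamma k}x^\beta x^\gamma$. This is the step you defer to ``checking against \cite{YX1}'', and it has to be supplied.
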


\noindent
These asymptotics can be improved in an adequately chosen coordinate chart, namely, in conformal normal coordinates (\cite{LP} and \cite{YX1}).

\begin{proposition}\label{proplp} {\textcolor{red}{ 22/9; tau 1/9 }}
Given $p\in M$ and $N\geq 2$. Then there exits a conformal metric $g$ on $M$ such that around the point $p$ we have
$$det(g_{ij})=1+O(r^{N}).$$
Moreover, at the point $p$, it holds
\begin{itemize}
\item[i)] $R_{ij}=0$.
\item[ii)] $R_{ij,k}+R_{jk,i}+R_{ki,j}=0$.
\item[iii)] $\Big(R_{\alpha \beta,k\ell}+\frac{2}{9}\sum \limits_{i,d}R_{i\alpha \beta d}R_{ik\ell d}\Big)x^{\alpha}x^{\beta}x^{k}x^{\ell}=0.$
\item[iv)] $-\Delta R=-R_{,kk}=\frac{1}{6}|W|^{2}$.
\end{itemize}
\end{proposition}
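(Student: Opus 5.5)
The plan is the Lee–Parker inductive construction of conformal normal coordinates: correct the conformal factor one homogeneous degree at a time. Afterwards, the identities i)–iv) are read off from the Taylor expansion of $\det(g_{ij})$ in normal coordinates.

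\textbf{Step 1 (expansion of the volume density).} In $g$-normal coordinates at $p$ I will use the classical expansion
$$\log\det(g_{ij})=-\tfrac13 R_{ij}x^ix^j-\tfrac16 R_{ij,k}x^ix^jx^k-\Big(\tfrac1{20}R_{ij,k\ell}+\tfrac1{90}R_{aijb}R_{ak\ell b}\Big)x^ix^jx^kx^\ell+O(r^5).$$
It follows from the Jacobi field equation, i.e.\ the expansion of $g_{ij}$ in terms of $R_{i\alpha\beta j}$ and its derivatives, which is the same computation underlying Proposition \ref{propest1}.

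\textbf{Step 2 (induction on $N$).} Suppose $\det g=1+O(r^{N})$ in $g$-normal coordinates, and let $P_N$ be the homogeneous degree-$N$ part of $\log\det g$. I look for $\hat g=e^{2f}g$ with $f$ a homogeneous polynomial of degree $N$ in the coordinates.

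Since $f=O(r^N)$, the $\hat g$-geodesics from $p$ differ from the $g$-geodesics only at high order, so the $\hat g$-normal coordinates $\hat x$ satisfy $\hat x=x+O(r^{N+1})$. The change of coordinates then contributes to the degree-$N$ part of $\log\det\hat g$ only through a linear first-order correction in $f$. Tracking the radial reparametrization, the degree-$N$ part becomes
$$P_N+\mathcal L_N f,\qquad \mathcal L_N f=2nf-c\,r^{2}\Delta f$$
(up to the exact constants, which come from $\hat x^i=x^i+$ terms built from $x^i\,x\cdot\nabla f$ and $r^2\partial_i f$). Following Lee–Parker, I will decompose homogeneous polynomials into $r^{2k}H_{N-2k}$ with $H_m$ harmonic. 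On each piece $\mathcal L_N$ acts as multiplication by an explicit constant, and I will check that this constant is nonzero. Hence $\mathcal L_N$ is invertible on homogeneous polynomials of degree $N$, so I can solve $\mathcal L_N f=-P_N$.

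Extending $e^{2f}$ to a smooth positive function on $M$ then gives $\det\hat g=1+O(r^{N+1})$. Iterating from the base case $N=2$ (which is automatic in normal coordinates) and multiplying the conformal factors yields any desired $N$.

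\textbf{Step 3 (curvature identities).} Take $N\ge5$. Then all polynomial coefficients in Step 1 of degree $\le4$ must vanish after symmetrization.
\begin{itemize}
\item Degree 2 gives $R_{ij}x^ix^j=0$, and since $R_{ij}$ is symmetric this is i).
\item Degree 3 gives that the full symmetrization of $R_{ij,k}$ vanishes, which is ii).
\item Degree 4 gives iii). The contribution of $R_{ij}R_{k\ell}$ that appears in the general expansion drops out because of i), and the remaining constants combine into the stated factor $\tfrac29$ (note $\tfrac{1/90}{1/20}=\tfrac29$).
\end{itemize}
For iv), I will fully contract iii), i.e.\ apply $\Delta^2$ to the quartic polynomial, using three inputs:
\begin{itemize}
\item i), so that $W=\mathrm{Rm}$ at $p$;
\item the contracted Bianchi identity $R_{ik,i}=\tfrac12R_{,k}$, differentiated once more;
\item the commutation of covariant derivatives, whose curvature terms vanish at $p$ by i).
\end{itemize}
The terms $R_{ii,kk}$, $R_{ik,ik}$ and $R_{kk,ii}$ all reduce to multiples of $\Delta R$. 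The curvature-squared terms reduce to $\sum R_{i\alpha\beta d}R_{i\alpha\beta d}$ and $\sum R_{i\alpha\beta d}R_{i\beta\alpha d}$; the latter is $\tfrac12|W|^2$ by the first Bianchi identity. Collecting terms should give $-\Delta R=\tfrac16|W|^2$.

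\textbf{Main obstacle.} I expect the hard part to be Step 2: computing exactly how the normal coordinates and the determinant change at order $N$ under $e^{2f}$, and proving that $\mathcal L_N$ is invertible. My first approach is to compute $\mathcal L_N$ on $r^{2k}H_m$ via the identities $\Delta(r^{2k}H_m)=2k(2k+2m+n-2)r^{2k-2}H_m$ and $x\cdot\nabla(r^{2k}H_m)=(2k+m)\,r^{2k}H_m$, and then check that the resulting eigenvalues never vanish.
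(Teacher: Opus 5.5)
Your route is the right one: it is the Lee--Parker inductive construction, which the paper cites rather than proves. Steps~1 and~3 are fine. In fact the $R_{ij}R_{k\ell}$ term is already absent from $\log\det g$. Tracing iii) does give $2\Delta R+\frac13|W|^2=0$, using $R_{ab,ab}=R_{ab,ba}=\frac12\Delta R$, $R_{iabd}R_{iabd}=|W|^2$ and $R_{iabd}R_{ibad}=\frac12|W|^2$, as you say.

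The gap is Step~2, which carries the whole existence claim. You never compute $\mathcal L_N$, and the form you commit to, $2nf-c\,r^2\Delta f$, is wrong in exactly the way that matters:
\begin{itemize}
\item Both pieces of the coordinate change, the radial one $\propto f\,x$ and the tangential one $\propto r^2\nabla f$, feed into the coefficient of $f$, so that coefficient is not $2n$.
\item The Laplacian term enters with a plus sign, not a minus sign.
\end{itemize}
With your sign, the eigenvalue on $r^{2k}H_m$ would be $2n-2ck(N+m+n-2)$, and nothing prevents it from vanishing. So ``check that this constant is nonzero'' is not a formality. The case $N=2$ already shows the problem: $\hat R_{ij}(p)=R_{ij}-(n-2)f_{ij}-\Delta f\,\delta_{ij}$ and $\log\det g=-\frac13R_{ij}x^ix^j+O(r^3)$ give $\mathcal L_2f=\frac{2(n-2)}{3}f+\frac13r^2\Delta f$.

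To close the gap, solve the linearized $\hat g$-geodesic equation. With $\gamma(t)=tv+\delta(t)$ you get
$$\ddot\delta=t^{N-1}\big(|v|^2\nabla f(v)-2Nf(v)\,v\big)+O(t^{N+1}),$$
because $\Gamma^k_{ij}(tv)v^iv^j=0$ and $g_{ij}-\delta_{ij}=O(r^2)$. Hence
$$x=\hat x+\frac{1}{N(N+1)}\big(|\hat x|^2\nabla f-2Nf\,\hat x\big)+O(|\hat x|^{N+2}).$$
Then $\log\det\hat g=2nf+\log\det g+2\,\mathrm{div}(x-\hat x)+O(r^{N+1})$. Using $x\cdot\nabla f=Nf$, you find
\[
\mathcal L_Nf=\frac{2(n-2)(N-1)}{N+1}\,f+\frac{2}{N(N+1)}\,r^2\Delta f ,
\]
which agrees with the $N=2$ formula above. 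On $r^{2k}H_m$ with $2k+m=N$ the eigenvalue is
$$\frac{2(n-2)(N-1)}{N+1}+\frac{4k(N+m+n-2)}{N(N+1)},$$
which is strictly positive for $n\geq 3$ and $N\geq 2$. So $\mathcal L_N$ is invertible on homogeneous polynomials of degree $N$. Your induction then goes through, with a cutoff on $f$ and with $N\geq 5$ for i)--iv).
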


\noindent
Our spinor $\Psi$ defined in $(\ref{psi})$, enjoys some nice properties in these coordinates (\cite{YX1}).

\begin{lemma}\label{lemyx}
For $\Psi$ defined as in $(\ref{psi})$, we have for any $x\in \R^n$
$$R_{i\alpha\beta j}x^{\alpha}x^{\beta}\partial_{x_{i}}\cdot \nabla _{\partial_{x_{j}}}\Psi (x) =0.$$
Moreover, if $A_{ijk\ell}\in \R$ with $1\leq i,j,k,\ell \leq n$, then we can choose $\Psi_{0}$ in $(\ref{psi})$, so that
$$A_{ijk\ell}\langle \partial_{x_{i}}\cdot \partial_{x_{j}}\cdot \partial_{x_{k}}\cdot \partial_{x_{\ell}}\cdot \Psi_{0},\Psi_{0}\rangle =0.$$
\end{lemma}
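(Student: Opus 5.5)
The plan splits into two independent parts. The first part is a direct computation using the symmetries of the curvature tensor at $p_{0}$ together with the conformal normal coordinates of Proposition \ref{proplp}. The second part is a linear-algebra argument on the finite-dimensional module $\mathbb{S}_{n}$. Throughout, I read $R_{i\alpha\beta j}$ as the constant tensor evaluated at $p_{0}$, in the conformal normal coordinates of Proposition \ref{proplp}, so that $R_{\alpha\beta}(p_{0})=0$.

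\textbf{First identity.} Write $\Psi=f^{n/2}(1-x)\cdot\Psi_{0}$ with $f=(1+|x|^{2})^{-1}$. Differentiating gives
$$\nabla_{\partial_{x_j}}\Psi=-n f^{\frac n2+1}x^{j}(1-x)\cdot\Psi_{0}-f^{\frac n2}\,\partial_{x_j}\cdot\Psi_{0}.$$
I would contract each piece with $R_{i\alpha\beta j}x^{\alpha}x^{\beta}\partial_{x_i}\cdot$ in turn.
\begin{itemize}
\item The first piece produces the factor $R_{i\alpha\beta j}x^{\alpha}x^{\beta}x^{j}$. This vanishes because $R_{i\alpha\beta j}$ is antisymmetric in $(\beta,j)$ while $x^{\beta}x^{j}$ is symmetric.
\item The second piece produces $f^{n/2}R_{i\alpha\beta j}x^{\alpha}x^{\beta}\,\partial_{x_i}\cdot\partial_{x_j}\cdot\Psi_{0}$, which I split into diagonal and off-diagonal terms.
\end{itemize}
The diagonal terms $i=j$ give $-\sum_i R_{i\alpha\beta i}x^\alpha x^\beta\Psi_0$. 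This is a multiple of $R_{\alpha\beta}(p_0)x^\alpha x^\beta\Psi_0$, which is zero by Proposition \ref{proplp} i).

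For the off-diagonal terms I would use pair symmetry and the two antisymmetries:
$$R_{j\alpha\beta i}=R_{\beta i j\alpha}=R_{i\beta\alpha j}.$$
Hence $T_{ij}:=R_{i\alpha\beta j}x^{\alpha}x^{\beta}$ is symmetric in $(i,j)$. Since $\partial_{x_i}\cdot\partial_{x_j}=-\partial_{x_j}\cdot\partial_{x_i}$ for $i\neq j$, the off-diagonal sum cancels. This proves the first claim for every $x\in\R^{n}$.

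\textbf{Second identity.} Set $Q=\sum A_{ijk\ell}\,\partial_{x_i}\cdot\partial_{x_j}\cdot\partial_{x_k}\cdot\partial_{x_\ell}\in Cl_{n}\otimes\C$. Using $e_ie_j+e_je_i=-2\delta_{ij}$, I would reduce $Q$ to $Q=c+\omega_{2}+\omega_{4}$, where:
\begin{itemize}
\item $c$ is a scalar, namely a combination of the traces $A_{iijj},A_{ijij},A_{ijji}$;
\item $\omega_2$ is a combination of products $e_ie_j$ with $i\neq j$;
\item $\omega_4$ is a combination of products of four distinct generators.
\end{itemize}
Since Clifford multiplication by vectors is skew-Hermitian, $e_ie_j$ with $i\ne j$ is skew-Hermitian, so $\langle\omega_2\Psi_0,\Psi_0\rangle=\mathrm{Re}(\omega_2\Psi_0,\Psi_0)=0$. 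For four distinct indices, $(e_ie_je_ke_\ell)^{*}=e_\ell e_ke_je_i=e_ie_je_ke_\ell$, so $\omega_4$ is Hermitian. Moreover $\mathrm{tr}(\omega_4)=0$, since the trace of a product of distinct generators vanishes on $\mathbb{S}_n$. The real quadratic form $\Psi_0\mapsto\langle Q\Psi_0,\Psi_0\rangle$ is therefore $c|\Psi_0|^2+\langle\omega_4\Psi_0,\Psi_0\rangle$, with $\omega_4$ a traceless Hermitian endomorphism.

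In that case $\omega_4$ is either zero or has eigenvalues of both signs. On the connected sphere $\{|\Psi_0|^2=a_n\}$, the intermediate value theorem then gives a $\Psi_0$ on which $\langle\omega_4\Psi_0,\Psi_0\rangle=0$. This settles the case $c=0$.

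\textbf{Main obstacle.} The main obstacle is the scalar part $c$. For a completely arbitrary $A$ the claim fails: $A_{iijj}=1$ gives $Q=n^{2}$, which never vanishes on a nonzero $\Psi_0$. So I would restrict to the tensors that actually occur later, namely products and derivatives of curvature at $p_0$. For these I would verify $c=0$ from the Bianchi identities and the trace conditions in Proposition \ref{proplp} ($R_{ij}=0$, $R_{ij,k}$ cyclic sum zero, and the relation iii)). The precise hypothesis would be stated as: the total contractions of $A$ that make up $c$ vanish. If $c=0$ cannot be checked for some tensor that arises, the fallback is to show that $|c|$ is at most the top eigenvalue of $\omega_4$. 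The same intermediate-value argument then produces $\Psi_0$ with $\langle\omega_4\Psi_0,\Psi_0\rangle=-c\,a_n$.
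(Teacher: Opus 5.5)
Your proof is correct, and on two points it is more careful than the statement itself. The paper gives no argument for this lemma (it defers to \cite{YX1}), so there is no competing route to compare with. Your computation for the first identity is the natural one, and you are right that it genuinely needs $R_{\alpha\beta}(p_{0})=0$. The off-diagonal part cancels for any metric, because $T_{ij}=R_{i\alpha\beta j}x^{\alpha}x^{\beta}$ is always symmetric. The diagonal part, however, is a multiple of $\mathrm{Ric}_{p_{0}}(x,x)\Psi_{0}$, which is nonzero for instance for the round metric in its ordinary normal coordinates. So the constant tensor must be read in the conformal normal coordinates of Proposition \ref{proplp}, as you do. You are also right that the second claim is false for arbitrary real $A_{ijk\ell}$: your $A_{iijj}=1$ example gives the value $n^{2}a_{n}$. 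Your splitting $Q=c+\omega_{2}+\omega_{4}$, with $\omega_{2}$ skew-Hermitian and $\omega_{4}$ Hermitian and traceless, followed by the intermediate value theorem on the sphere $\{|\Psi_{0}|^{2}=a_{n}\}$, proves the claim whenever $c=0$.

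Where your plan drifts is the final paragraph. You do not need Bianchi identities, the trace conditions of Proposition \ref{proplp}, or any fallback to get $c=0$. In this paper the second identity is used only in the $F_{2}$ estimate, where the tensor comes from $\Omega$, whose Clifford factor is $e_{i}\cdot e_{j}\cdot e_{k}$ with $i,j,k$ pairwise distinct. After rescaling, expand $\langle e_{i}e_{j}e_{k}\cdot(1-x)\cdot\Psi_{0},(1-x)\cdot\Psi_{0}\rangle$. The only part surviving the symmetric integration against $x^{\alpha}x^{\beta}x^{\gamma}$ is the linear one, $-x^{\ell}\langle(e_{i}e_{j}e_{k}e_{\ell}-e_{\ell}e_{i}e_{j}e_{k})\cdot\Psi_{0},\Psi_{0}\rangle$. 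This bracket vanishes when $\ell\in\{i,j,k\}$ and equals $2\langle e_{i}e_{j}e_{k}e_{\ell}\cdot\Psi_{0},\Psi_{0}\rangle$ otherwise. So in the actual application only products of four distinct generators occur, and $c=0$ and even $\omega_{2}=0$ hold purely from the index structure. The clean hypothesis to add to the lemma is that $A_{ijk\ell}$ be supported on pairwise distinct indices.

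Two small corrections. First, the blanket claim that any product of distinct generators is traceless on $\mathbb{S}_{n}$ fails for the volume element when $n$ is odd. Justify $\mathrm{tr}(e_{i}e_{j}e_{k}e_{\ell})=0$ instead by conjugating with $e_{i}$, which anticommutes with any even-length product of distinct generators containing it. Second, the fallback condition should be $-c\in[\lambda_{\min}(\omega_{4}),\lambda_{\max}(\omega_{4})]$ rather than $|c|\le\lambda_{\max}(\omega_{4})$, although, as explained, the fallback is never needed.
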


\noindent
We can now define our test spinor $\varphi_{\varepsilon}:=\overline{\psi_{\varepsilon}}$, with $\psi_{\varepsilon}$ defined in (\ref{psiepsilon}). We recall that our goal in here is to apply Proposition \ref{prop exp} for the test spinor $\varphi_{\varepsilon}$.  In order to do that, we need to show that $(\varphi_{\varepsilon})_{\varepsilon}$ is a $(PS)_{c}$ sequence for $J_{g}$. Notice that, only the convergence of $\nabla J_{g}(\varphi_{\varepsilon})$ to zero will not be enough. Instead, we need a precise estimate on the convergence rate to zero in terms of $\varepsilon$. In all what follows, $C$ will denote a suitable positive constant independent of $\varepsilon$.

\begin{proposition}\label{lemma1 exp}
For $\varphi_{\varepsilon}$ defined as above, we have
$$\|\nabla J_{g}(\varphi_{\varepsilon})\|_{H^{-\frac{1}{2}}}\leq \left\{\begin{array}{ll}
O(\varepsilon^{\frac{n-1}{2}}), &  4\leq n\leq 6\\
\\
O(\varepsilon^{3}|\ln(\varepsilon)|^{\frac{4}{7}}), &  n=7\\
\\
O(\varepsilon^{3}), &  n\geq 8
\end{array}
\right. .$$
\end{proposition}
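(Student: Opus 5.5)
We propose to compute the gradient directly and estimate it by duality. By definition,
$$\nabla J_{g}(\varphi_{\varepsilon})=D_{g}\varphi_{\varepsilon}-(V_{g}*|\varphi_{\varepsilon}|^{2})\varphi_{\varepsilon},$$
so it suffices to bound $\int_M\langle \nabla J_g(\varphi_\varepsilon),\phi\rangle\,dv_g$ for $\|\phi\|_{\frac12}\le 1$. We will split this error into three parts: a \emph{Dirac part}, a \emph{convolution part} and a \emph{cut-off part}. Each will then be controlled in the $L^{\frac{2n}{n+1}}$ norm, which is admissible since $H^{\frac12}\hookrightarrow L^{\frac{2n}{n-1}}$ by duality.

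\textbf{Dirac part.} We work in conformal normal coordinates (Proposition \ref{proplp}) and use the formula for $\overline D\,\overline\psi$. On $B_\delta$ this gives
$$\overline D\,\overline{\psi_\varepsilon}=\overline{D\Psi_\varepsilon}+\Omega\cdot\overline{\Psi_\varepsilon}+X\cdot\overline{\Psi_\varepsilon}+\sum_{i,j}(b_{ij}-\delta_{ij})\overline{\partial_{x_i}\cdot\nabla_{\partial_{x_j}}\Psi_\varepsilon}.$$
The model equation $D\Psi=\frac n2 f\Psi$ handles the first term. For the last term, Lemma \ref{lemyx} kills the quadratic part of $b_{ij}-\delta_{ij}$ from Proposition \ref{propest1}, so only a cubic remainder $O(r^3)|\nabla\Psi_\varepsilon|$ survives. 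For $X$, the fact that $R_{ij}(p)=0$ leaves $O(r^2)|\Psi_\varepsilon|$, and in fact the symmetrized condition ii) of Proposition \ref{proplp} makes the $r^2$ term of $X$ of a special form; if needed, we will use the freedom in Lemma \ref{lemyx} to ignore it. The term $\Omega$ contributes $O(r^3)|\Psi_\varepsilon|$. Using $|\Psi_\varepsilon|\sim \varepsilon^{-\frac{n-1}{2}}(1+|x/\varepsilon|^2)^{-\frac{n-1}{2}}$ and $|\nabla\Psi_\varepsilon|\sim\varepsilon^{-1}|\Psi_\varepsilon|$, one computes
$$\|r^{k}\Psi_\varepsilon\|_{L^{\frac{2n}{n+1}}(B_{2\delta})}\approx \varepsilon^{k+1}\Big(\int_{B_{2\delta/\varepsilon}}|y|^{\frac{2nk}{n+1}}(1+|y|^2)^{-\frac{n(n-1)}{n+1}}dy\Big)^{\frac{n+1}{2n}}.$$
This integral converges when $k<\frac{n-3}{2}$ and otherwise yields $\varepsilon^{\frac{n-1}{2}}$ (with a logarithm at equality). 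Taking the worst effective order $\varepsilon^{3}$ from the $r^{3}|\nabla\Psi_\varepsilon|$ and $r^2|\Psi_\varepsilon|$ terms gives $\varepsilon^{\min(3,\frac{n-1}{2})}$, with a log in the borderline dimension $n=7$. The power $\frac47$ comes from $\frac{n+1}{2n}\cdot$ evaluated at $n=7$, i.e. $|\ln\varepsilon|^{\frac{n+1}{2n}}=|\ln\varepsilon|^{4/7}$.

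\textbf{Convolution part.} We must compare $(V_g*|\varphi_\varepsilon|^2)\varphi_\varepsilon$ with $\frac n2 f_\varepsilon\Psi_\varepsilon$, where $f_\varepsilon(x)=\varepsilon^{-1}f(x/\varepsilon)$. The constant $a_n$ is chosen precisely so that on $\R^n$ we have $d_n b_n^{\frac{2}{n-2}}|x-y|^{-2}*|\Psi|^2=h_n|x-y|^{-2}*|\Psi|^2=\frac n2 f$. Then:
\begin{enumerate}
\item[(a)] We use Proposition \ref{propgreen} to write $V_g(x,y)=h_n|x-y|^{-2}(1+O(|x-y|^{n-2}))$ in the LCF or low-dimensional case, and $1+O(|x-y|^4)$ (plus a log when $n=6$) otherwise. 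This works because $\det g=1+O(r^N)$ makes the volume form Euclidean to high order. Note that the expansion in Proposition \ref{propgreen} is centered at $p$; for general $x,y$ near $p$ we will need the uniform version $G_g(x,y)=b_n d_g(x,y)^{2-n}(1+O(d^{2}))$. Combined with $d_g(x,y)^2=|x-y|^2(1+O(r^2))$, this gives a relative error of order $r^2$ at worst.
\item[(b)] We estimate $\int |x-y|^{-2}O(\rho^2)|\Psi_\varepsilon(y)|^2dy\cdot|\Psi_\varepsilon(x)|$ in $L^{\frac{2n}{n+1}}$ by the same scaling computation. Even a crude $O(|x-y|^{2}+r^2)$ relative error gives $O(\varepsilon^{2})$ times a convergent integral, which is not good enough for $n\ge 6$.
\item[(c)] We therefore refine: in conformal normal coordinates with $\det g=1+O(r^N)$, the distance satisfies $d_g(x,y)^2=|x-y|^2+O(|x-y|^2 r^2)$. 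The quadratic correction of $G_g$ at order $r^2$ integrates against the radial density $|\Psi_\varepsilon|^2$. Using the vanishing of the Ricci tensor at $p$ (Proposition \ref{proplp} i)), the angular average of the order-2 curvature term vanishes, leaving an order-3 remainder and hence $O(\varepsilon^{3})$.
\end{enumerate}

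\textbf{Cut-off part.} The cut-off region $\delta\le r\le2\delta$ contributes $\|\nabla\eta\,\Psi_\varepsilon\|+\|\Psi_\varepsilon\|_{L^{\frac{2n}{n+1}}(r>\delta)}=O(\varepsilon^{\frac{n-1}{2}})$. The cubic convolution tails are even smaller.

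\textbf{Main obstacle.} We expect the hardest step to be (c): showing that the non-local term has no $\varepsilon^2$ error. The plan there is to expand $V_g$ in two variables, symmetrize, and use $R_{ij}(p)=0$ together with the radial symmetry of $|\Psi|^2$ to cancel all second-order contributions.
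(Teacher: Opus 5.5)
Your skeleton is the paper's own. It also splits $\nabla J_g(\varphi_\varepsilon)$ into pieces $A_1,\dots,A_8$, each bounded in $L^{\frac{2n}{n+1}}$. Your Dirac part is its $A_6,A_7,A_8$: the same use of Lemma~\ref{lemyx} for the quadratic part of $b_{ij}-\delta_{ij}$, $|X|=O(r^2)$, $|\Omega|=O(r^3)$, and the same scaled radial integrals, hence the same $|\ln\varepsilon|^{4/7}$ at $n=7$. Your cut-off and tail terms are its $A_2,\dots,A_5$.

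The genuine difference is the nonlocal error $A_1$, and there your route is different and more careful. The paper reads Proposition~\ref{propgreen} as a uniform pointwise bound $|V_g(x,y)-V_{g_{\R^n}}(x,y)|\le C|x-y|^2$ (resp.\ $C|x-y|^2|\ln|x-y||$, $C$) and convolves, which gives $\varepsilon^3$ in two lines. But that expansion is centered at $p$. In the $p$-centered chart, $d_g(x,y)^2=|x-y|^2-\frac13 W(x,y,x,y)+O(|x-y|^2r^3)$ with $r=|x|+|y|$ (here the Riemann tensor at $p$ equals $W(p)$ because $R_{ij}(p)=0$). Hence $V_g-V_{g_{\R^n}}$ contains the term $\frac{h_n}{3}W(x,y,x,y)|x-y|^{-4}=O(|x|^2|x-y|^{-2})$, whose pointwise bound only yields $\varepsilon^2$. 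This is exactly your point (b). Your route pays for an honest two-point expansion of $G_g$, but it actually justifies the rate.

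The cancellation you plan in (c) does go through, and you should state it concretely. Put $y=x+w$ and let $\rho$ be a radial density. The $y$-integral of the term above equals $W_{abcd}(p)x^ax^cT^{bd}(x)$, where $T^{bd}(x)=\int w^bw^d|w|^{-4}\rho(|x+w|)\,dw$. By $O(n)$-equivariance, $T^{bd}=\alpha(|x|)\delta^{bd}+\beta(|x|)x^bx^d$. This is annihilated by the tracelessness of $W(p)$, which is precisely where $R_{ij}(p)=0$ enters, and by $W(x,x,x,x)=0$.

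Carry this out on $\{|y|<\delta\}$, where $\eta\equiv1$, and treat the rest as tail. The remaining corrections are of relative order $r^3$:
\begin{itemize}
\item the van Vleck factor $\frac1{12}Ric_x(v,v)$, with $Ric_x=O(|x|)$;
\item the curvature-derivative terms in $d_g^2$;
\item $R_g(x)=O(|x|^2)$;
\item the logarithmic term for $n=6$.
\end{itemize}
These give $\varepsilon^3$ with convergent scaled integrals for $n\ge 6$. For $n=4,5$ the crude $\varepsilon^2$ already suffices.

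There is one small slip in your Dirac part. The heuristic $|\nabla\Psi_\varepsilon|\sim\varepsilon^{-1}|\Psi_\varepsilon|$ is too lossy. With it, $\|r^3\nabla\Psi_\varepsilon\|_{L^{\frac{2n}{n+1}}}$ only gives $\varepsilon^{5/2}$ for $n=8$ and $\varepsilon^{3}|\ln\varepsilon|^{5/9}$ for $n=9$. Use instead the decay $|\nabla\Psi(y)|\le C(1+|y|^2)^{-\frac n2}$, as the paper does for $B_1$. This means $|\nabla\Psi_\varepsilon|\lesssim|\Psi_\varepsilon|/(\varepsilon+|x|)$, so $r^3|\nabla\Psi_\varepsilon|\lesssim r^2|\Psi_\varepsilon|$, and that term then has the same rate as the $X$ term.
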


\begin{proof}
We will proceed by estimating first the $H^{-\frac{1}{2}}$-norm of $\varphi_{\varepsilon}$ and then the norm of 
$$\nabla J_{g}(\varphi_{\varepsilon})=\overline{D}\varphi_{\varepsilon}-\Big(V_{g}*|\varphi_{\varepsilon}|^{2}\Big)\varphi_{\varepsilon}\; .$$ 
Here, $H^{-\frac{1}{2}}$ is the dual of the space $H^{\frac{1}{2}}(M)$ equipped with the norm $\|\cdot \|_{\frac{1}{2}}$. By the continuous embedding $L^{\frac{2n}{n+1}}(M)\hookrightarrow H^{-\frac{1}{2}}(M)$, we have

\begin{align}
\|\varphi_{\varepsilon}\|_{H^{-\frac{1}{2}}}&\leq C \|\varphi_{\varepsilon}\|_{L^{\frac{2n}{n+1}}}=\Big(\int_{B_{2\delta}}|\varphi_{\varepsilon}|^{\frac{2n}{n+1}}\ dv\Big)^{\frac{n+1}{2n}}\notag\\
&\leq C \Big(\int_{|x|\leq 2\delta} |\psi_{\varepsilon}(x)|^{\frac{2n}{n+1}}\ dx \Big)^{\frac{n+1}{2n}}\notag\\
&\leq C\varepsilon\Big(\int_{0}^{\frac{2\delta}{\varepsilon}} \frac{r^{n-1}}{(1+r^{2})^{\frac{n(n-1)}{n+1}}}\ dr \Big)^{\frac{n+1}{2n}} \notag\\
&\leq C\left\{\begin{array}{ll}
\varepsilon |\ln(\varepsilon)|^{\frac{2}{3}}, & n=3\\
\\
\varepsilon, & n\geq 4 
\end{array}
\right..\label{phieps}
\end{align}
Next, we focus on estimating $\nabla J_{g}(\varphi_{\varepsilon})$. Indeed, we have
\begin{align}
\overline{D}\varphi_{\varepsilon}&=\overline{D\psi_{\varepsilon}}+\Omega\cdot \overline{\psi_{\varepsilon}}+X\cdot \overline{\psi}+\sum_{i,j}(b_{ij}-\delta_{ij})\overline{\partial_{x_{i}}\cdot \nabla_{\partial_{x_{j}}}\psi_{\varepsilon}}\notag\\
&=\Big(\int_{\R^{n}}V_{g_{\R^{n}}}(x,y)|\Psi_{\varepsilon}(y)|^{2}\ dy\Big) \varphi_{\varepsilon}+(\nabla\eta(x)+X) \cdot \varphi_{\varepsilon}+\Omega\cdot \varphi_{\varepsilon}\notag\\
&\quad+\sum_{i,j}(b_{ij}-\delta_{ij})\overline{\partial_{x_{i}}\cdot \nabla_{\partial_{x_{j}}}\psi_{\varepsilon}}.\label{decomp}
\end{align}

\noindent
On the other hand,
\begin{align}
\overline{D \psi_{\varepsilon}}-(V_{g}*|\varphi_{\varepsilon}|^{2})\varphi_{\varepsilon}&=\Big(\int_{\R^{n}}V_{g_{\R^{n}}}(x,y)|\Psi_{\varepsilon}(y)|^{2}\ dy \Big)\varphi_{\varepsilon} \notag\\ 
&\quad - \Big(\int_{M}V_{g}(x,y)|\varphi_{\varepsilon}|^{2}\ dv(y)\Big) \varphi_{\varepsilon}+\nabla\eta(x) \cdot \varphi_{\varepsilon}\notag\\
&=\Big(\int_{|x-y|<\frac{\delta}{2}}[V_{g_{\R^{n}}}(x,y)-V_{g}(x,y)]|\varphi_{\varepsilon}(y)|^{2}\ dy \Big) \varphi_{\varepsilon}\notag\\
&\quad + \Big(\int_{|x-y|<\frac{\delta}{2}}V_{g}(x,y)O(|y|^{N})|\varphi_{\varepsilon}(y)|^{2}dy\Big)\varphi_{\varepsilon}\notag\\
&\quad + \Big(\int_{|x-y|>\frac{\delta}{2}}V_{g_{\R^{n}}}(x,y)|\Psi_{\varepsilon}(y)|^{2} dy\Big) \varphi_{\varepsilon}\notag\\
&\quad - \Big(\int_{|x-y|>\frac{\delta}{2}}V_{g}(x,y)|\varphi_{\varepsilon}|^{2}\ dv(y) \Big) \varphi_{\varepsilon} +\nabla \eta \cdot \varphi_{\varepsilon}.\notag
\end{align}

\noindent
This leads to 
\begin{align}
\overline{D}\varphi_{\varepsilon}-(V_{g}*|\varphi_{\varepsilon}|^{2})\varphi_{\varepsilon}&=\Big(\int_{|x-y|<\frac{\delta}{2}}[V_{g_{\R^{n}}}(x,y)-V_{g}(x,y)]|\varphi_{\varepsilon}(y)|^{2}\ dy \Big) \varphi_{\varepsilon}\notag\\
&\quad+\Big(\int_{|x-y|>\frac{\delta}{2}}V_{g_{\R^{n}}}(x,y)|\Psi_{\varepsilon}(y)|^{2} dy\Big) \varphi_{\varepsilon}-\Big(\int_{|x-y|>\frac{\delta}{2}}V_{g}(x,y)|\varphi_{\varepsilon}|^{2}\ dv(y) \Big) \varphi_{\varepsilon} \notag\\
&\quad+\Big(\int_{|x-y|<\frac{\delta}{2}}V_{g}(x,y)O(|x-y|^{N})|\varphi_{\varepsilon}(y)|^{2}dy\Big)\varphi_{\varepsilon}\notag\\
&\quad +\nabla \eta \cdot \varphi_{\varepsilon} + \Omega\cdot \overline{\psi_{\varepsilon}}+X\cdot \overline{\psi}+\sum_{i,j}(b_{ij}-\delta_{ij})\overline{\partial_{x_{i}}\cdot \nabla_{\partial_{x_{j}}}\psi_{\varepsilon}}\notag\\
&=A_{1}+A_{2}+A_{3}+A_{4}+A_{5}+A_{6}+A_{7}+A_{8}.\notag
\end{align}
We will estimate now the terms $A_{i}$ for $i=1,\cdots,8$, in dimensions $n\geq 4$. Indeed, for $A_{5}$, we have
\begin{align}
\|A_{5}\|_{H^{-\frac{1}{2}}}&\leq C\|A_{5}\|_{L^{\frac{2n}{n+1}}}=C\Big(\int_{B_{2\delta}}|\overline{\nabla \eta \cdot \Psi_{\varepsilon}}|^{\frac{2n}{n+1}}\ dv \Big)^{\frac{n+1}{2n}}\notag\\
&\leq C\Big(\int_{\delta\leq |x|\leq 2\delta}|\Psi_{\varepsilon}(x)|^{\frac{2n}{n+1}}\ dx \Big)^{\frac{n+1}{2n}}\notag\\
&\leq C \varepsilon \Big(\int_{\frac{\delta}{\varepsilon}}^{\frac{2\delta}{\varepsilon}}\frac{r^{n-1}}{(1+r^{2})^{\frac{n(n-1)}{(n+1)}}}\ dr \Big)^{\frac{n+1}{2n}}\leq C \varepsilon^{\frac{n-1}{2}}.\notag
\end{align}
For $A_{1}$, we use Proposition \ref{propgreen}  in Section 2, in a small coordinate patch, we have 
$$|V_{g}(x,y)-V_{g_{\R^{n}}}(x,y)|=|h(x,y)|,$$ 
with $h$ having different behavior depending on the dimension $n$. Indeed,
\begin{itemize}
\item If $n\geq 7$ then $|h(x,y)|\leq Cr^{2}$. In this case, we have
\end{itemize}
\begin{align}
\|A_{1}\|_{H^{-\frac{1}{2}}}&\leq C\|A_{1}\|_{L^{\frac{2n}{n+1}}}\leq C\Big(\int_{B_{2\delta}}\Big(\int_{B_{2\delta}}h(x,y)|\Psi_{\varepsilon}(y)|^{2}\ dy |\Psi_{\varepsilon}(x)|\ \Big)^{\frac{2n}{n+1}}dx\Big)^{\frac{n+1}{2n}}\notag\\
&\leq C\Big(\int_{B_{2\delta}}\Big(\int_{B_{2\delta}}|y|^{2}|\Psi_{\varepsilon}(x-y)|^{2}\ dy |\Psi_{\varepsilon}(x)|\Big)^{\frac{2n}{n+1}}\ dx\Big)^{\frac{n+1}{2n}} \notag\\
&\leq  C \varepsilon^{3}.\notag
\end{align}
\begin{itemize}
\item If $n=6$, then we have $|h(x,y)|\leq Cr^{2}|\ln(r)|$. Thus,
\end{itemize}
\begin{align}
\|A_{1}\|_{H^{-\frac{1}{2}}}&\leq C\|A_{1}\|_{L^{\frac{2n}{n+1}}}\leq C\Big(\int_{B_{2\delta}}\Big(\int_{B_{2\delta}}|y|^{2}|\ln(|y|)||\Psi_{\varepsilon}(x-y)|^{2}\ dy |\Psi_{\varepsilon}(x)|\Big)^{\frac{2n}{n+1}}\ dx\Big)^{\frac{n+1}{2n}} \notag\\
&\leq  C \varepsilon^{3}|\ln(\varepsilon)|.\notag
\end{align}

\begin{itemize}
\item Finally, for $n=4$ or $5$, then $|h(x,y)|\leq C$. Therefore, a similar process as above yields 
\end{itemize}
$$\|A_{1}\|_{H^{-\frac{1}{2}}}\leq C \varepsilon^{2}.$$

\noindent
Notice that a similar inequality holds for $\|A_{4}\|_{H^{-\frac{1}{2}}}$, if we choose $N\geq 4$. Hence we move to $A_{2}$. We use the fact that the Green's function is bounded outside of the diagonal and we have
\begin{align}
\|A_{2}\|_{H^{-\frac{1}{2}}}&\leq C \Big(\int_{|x|<\frac{\delta}{4}}\Big(\int_{|x-y|>\frac{\delta}{2}}|\Psi_{\varepsilon}(y)|^{2}\ dy |\Psi_{\varepsilon}(x)|^{2}\Big)^{\frac{2n}{n+1}}\ dx \Big)^{\frac{n+1}{2n}}\notag\\
&+\Big(\int_{\frac{\delta}{4}<|x|<2\delta}\Big(\int_{|x-y|>\frac{\delta}{2}}|\Psi_{\varepsilon}(y)|^{2}\ dy |\Psi_{\varepsilon}(x)|^{2}\Big)^{\frac{2n}{n+1}}\ dx \Big)^{\frac{n+1}{2n}}\notag\\
&\leq C \varepsilon\Big( \|\varphi_{\varepsilon}\|_{L^{\frac{2n}{n+1}}}\int_{\frac{\delta}{4\varepsilon}}^{\infty}\frac{r^{n-1}}{(1+r^{2})^{n-1}}\ dr +\|\varphi_{\varepsilon}\|_{L^{2}}^{2}\Big(\int_{\frac{\delta}{4\varepsilon}}^{\frac{2\delta}{\varepsilon}} \frac{r^{n-1}}{(1+r^{2})^{\frac{n(n-1)}{n+1}}}\ dr \Big)^{\frac{n+1}{2n}}\Big)\notag\\
& \leq C \varepsilon^{\frac{n+1}{2}}.\notag
\end{align}
A similar inequality holds for $\|A_{3}\|_{H^{-\frac{1}{2}}}$. Now, using Proposition \ref{propest1}, we have $|\Omega|=O(|x|^{3})$. Therefore,
\begin{align}
\|A_{6}\|_{H^{-\frac{1}{2}}}&\leq C \Big(\int_{B_{2\delta}}|\Omega|^{\frac{2n}{n+1}}|\varphi_{\varepsilon}|^{\frac{2n}{n+1}}\ dv \Big)^{\frac{n+1}{n}} \leq C \Big( \int_{|x|\leq 2\delta} |x|^{\frac{6n}{n+1}}|\Psi_{\varepsilon}(x)|^{\frac{2n}{n+1}}\ dx\Big)^{\frac{n+1}{2n}} \notag\\
&\leq C \varepsilon^{4} \Big( \int_{0}^{\frac{2\delta}{\varepsilon}} \frac{r^{\frac{6n}{n+1}+n-1}}{(1+r^{2})^{\frac{n(n-1)}{n+1}}}\ dr \Big)^{\frac{n+1}{2n}}\notag\\
&\leq C \left\{\begin{array}{ll}
\varepsilon^{\frac{n-1}{2}}, & 3\leq n\leq 8\\
\\
\varepsilon^{4}|\ln(\varepsilon)|^{\frac{5}{9}}, & n=9\\
\\
\varepsilon^{4}, & n\geq 10
\end{array}
\right..\notag
\end{align}
For $A_{7}$, working again in conformal normal coordinates and using Proposition \ref{propest1} and \ref{proplp}, we have, $|X|=O(|x|^{2})$. Hence,
\begin{align}
\|A_{7}\|_{H^{-\frac{1}{2}}}&\leq C \Big(\int_{B_{2\delta}}|X|^{\frac{2n}{n+1}}|\varphi_{\varepsilon}|^{\frac{2n}{n+1}}\ dv \Big)^{\frac{n+1}{2n}}\notag\\
&\leq C \Big(\int_{|x|\leq 2\delta}|x|^{\frac{4n}{n+1}}|\psi_{\varepsilon}(x)|^{\frac{2n}{n+1}}\ dx \Big)^{\frac{n+1}{2n}}\notag\\
&\leq C  \varepsilon^{3} \Big(\int_{0}^{\frac{2\delta}{\varepsilon}} \frac{r^{\frac{4n}{n+1}+n-1}}{(1+r^{2})^{\frac{n(n-1)}{n+1}}}\ dx \Big)^{\frac{n+1}{2n}}\notag\\
&\leq C \left\{\begin{array}{ll}
\varepsilon^{\frac{n-1}{2}}, & 4\leq n \leq 6\\
\\
\varepsilon^{3}|\ln(\varepsilon)|^{\frac{4}{7}}, & n=7\\
\\
\varepsilon^{3}, & n\geq 8
\end{array}
\right..\notag
\end{align}
It remains now to estimate $A_{8}$. Recalling that
$$A_{8}=\sum_{i,j}(b_{ij}-\delta_{ij})\overline{\partial_{x_{i}}\cdot \nabla_{\partial_{x_{j}}}\psi_{\varepsilon}} \; ,$$
from (\ref{psiepsilon}), we will write $A_{8}=B_{1}+B_{2}$, where 
$$B_{1}:=\eta\sum_{i,j}(b_{ij}-\delta_{ij})\overline{\partial_{x_{i}}\cdot \nabla_{\partial_{x_{j}}}\Psi_{\varepsilon}}\quad \text{ and  }\quad B_{2}:=\sum_{i,j}(b_{ij}-\delta_{ij})(\partial_{x_{j}}\eta)\overline{\partial_{x_{i}}\cdot \Psi_{\varepsilon}}.$$
Notice that since $|\nabla \Psi(r)|\leq C \big(f(r)\big)^{\frac{n}{2}}$, and using Lemma \ref{lemyx}, we have
\begin{align}
\|B_{1}\|_{H^{-\frac{1}{2}}}&\leq C \Big(\int_{|x|\leq 2\delta}|x|^{\frac{6n}{n+1}}|\nabla \Psi_{\varepsilon}(x)|^{\frac{2n}{n+1}}\ dx \Big)^{\frac{n+1}{2n}}\notag\\
& \leq C \varepsilon^{3}\Big(\int_{0}^{\frac{2\delta}{\varepsilon}}\frac{r^{\frac{6n}{n+1}+n-1}}{(1+r^{2})^{\frac{n^{2}}{n+1}}}\ dr \Big)^{\frac{n+1}{2n}}\notag\\
&\leq \left\{\begin{array}{ll}
\varepsilon^{\frac{n-1}{2}}, & 1\leq n\leq 6\\
\\
\varepsilon^{3}|\ln(\varepsilon)|^{\frac{4}{7}}, & n=7\\
\\
\varepsilon^{3}, & n\geq 8
\end{array}
\right..\notag
\end{align}
We finish now by estimating $B_{2}$: 
\begin{align}
\|B_{2}\|_{H^{-\frac{1}{2}}}&\leq C \Big(\int_{|x|\leq 2\delta}|x|^{\frac{6n}{n+1}}|\Psi_{\varepsilon}(x)|^{\frac{2n}{n+1}}\ dx \Big)^{\frac{n+1}{2n}}\notag\\
&\leq C  \left\{\begin{array}{ll}
\varepsilon^{\frac{n-1}{2}}, & 3\leq n\leq 8\\
\\
\varepsilon^{4}|\ln(\varepsilon)|^{\frac{5}{9}}, & n=9\\
\\
\varepsilon^{4}, & n\geq 10
\end{array}
\right..\notag
\end{align}
All the previous estimates can be summarized as follows:
\begin{equation}\label{Reps}
\|\nabla J_{g}(\varphi_{\varepsilon})\|_{H^{-\frac{1}{2}}} = \| \overline{D}\varphi_{\varepsilon}-(V_{g}*|\varphi_{\varepsilon}|^{2})\varphi_{\varepsilon} \|_{H^{-\frac{1}{2}}} \leq \left\{\begin{array}{ll}
O(\varepsilon^{\frac{n-1}{2}}), & 4\leq n\leq 6\\
\\
O(\varepsilon^{3}|\ln(\varepsilon)|^{\frac{4}{7}}), & n=7\\
\\
O(\varepsilon^{3}), & n\geq 8
\end{array}
\right. 
\end{equation}
which is the desired result.
\end{proof}

\noindent
This last Proposition provides us with two important information. On one hand, it tells us that $(\varphi_{\varepsilon})_{\varepsilon}$ is a $(PS)$ sequence for $J_{g}$ and hence Propositions \ref{prop diff} and \ref{prop exp} can be applied. On the other hand, it provides us with a precise estimate on the decay of $\|\nabla J_{\lambda}(\varphi_{\varepsilon})\|_{H^{-\frac{1}{2}}}$. \\
So now, we proceed with the energy estimate. We will consider two cases.

\subsection{\texorpdfstring{The case $n=4,5$ or $n\geq 6$ with $M$ not locally conformally flat}{The case n=4,5 or n greater than 6 with M not locally conformally flat}}
We will assume that if $n\geq 6$, then there exists $p\in M$ such that $|W(p)|^{2}\not=0$, which means that the manifold is not locally conformally flat. Then we have the following estimates:
\begin{proposition}\label{lemma 2 exp}
For $\varphi_{\varepsilon}$ defined as above and centered at $p\in M$, we have 

\begin{equation}\label{eqj}
J_{g}(\varphi_{\varepsilon})\leq \overline{Y}(S^{n},[g_{0}])-C\left\{\begin{array}{lll}
A(p)\varepsilon^{2} +O(\varepsilon^{3}), & n=4\\
\\
A(p)\varepsilon^{3}+O(\varepsilon^{4}), & n=5\\
\\
|W(p)|^{2}\varepsilon^{4}|\ln(\varepsilon)|+O(\varepsilon^{4}), & n=6\\
\\
|W(p)|^{2}\varepsilon^{4}+o(\varepsilon^{4}), & n\geq 7
\end{array}
\right. .
\end{equation}
\end{proposition}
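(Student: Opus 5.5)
We expand $J_g(\varphi_\varepsilon)$ directly in conformal normal coordinates centered at $p$ (Proposition \ref{proplp}, with $N$ large, so $dv=(1+O(r^N))dx$). The plan is to compare each of the two pieces of $J_g(\varphi_\varepsilon)$ with its Euclidean counterpart for the bubble $\Psi_\varepsilon$, and to read off the leading signed correction.

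\textbf{Step 1: the quadratic term.} We write it as $\frac12\int\langle\overline D\varphi_\varepsilon,\varphi_\varepsilon\rangle$ and insert the decomposition (\ref{decomp}).
\begin{itemize}
\item The Euclidean term gives $\frac12\int_{\R^n}\langle D\Psi_\varepsilon,\Psi_\varepsilon\rangle$, up to cut-off errors of order $\varepsilon^{n-1}$ (from $\eta$ and $\nabla\eta$).
\item The terms $\langle X\cdot\varphi_\varepsilon,\varphi_\varepsilon\rangle$ and $\langle\Omega\cdot\varphi_\varepsilon,\varphi_\varepsilon\rangle$ vanish pointwise. Clifford multiplication by a vector is skew-Hermitian, so the first has zero real part. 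For the second, $\Omega$ is a sum of products of three distinct orthonormal vectors, each of which is also skew, so its real part vanishes as well.
\item For the term with $b_{ij}-\delta_{ij}$ we use Proposition \ref{propest1} i). By Lemma \ref{lemyx}, the quadratic part $R_{i\alpha\beta j}x^\alpha x^\beta$ contributes nothing.
\item The cubic part $R_{i\alpha\beta j,k}x^\alpha x^\beta x^k$ is odd. We pair it against $\langle\partial_i\cdot\nabla_j\Psi,\Psi\rangle$ and use radial symmetry. Using ii) of Proposition \ref{proplp}, we expect it to integrate to zero or to a curvature combination that vanishes at $p$.
\item The quartic part is genuinely $O(r^4)$ and produces $\varepsilon^4\int r^4|\nabla\Psi||\Psi|$, which converges only for $n\ge 7$.
\end{itemize}
We would track this term exactly only if needed. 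The expectation, as in \cite{YX1}, is that after Lemma \ref{lemyx} (choosing $\Psi_0$ to kill the four-Clifford terms) the fourth-order contribution of the Dirac part reduces to a multiple of $R_{,kk}$, hence of $|W(p)|^2$ by Proposition \ref{proplp} iv).

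\textbf{Step 2: the quartic term.} We write $V_g=V_{\R^n}+h$ and use Proposition \ref{propgreen} raised to the power $\frac{2}{n-2}$. In normal coordinates centered at $p$ we replace $G_g(x,y)$ by its expansion. For $n\ge7$ this gives
$$V_g(x,y)=\frac{h_n}{|x-y|^2}\Big(1+\frac{2}{48(n-1)(n-4)}\Big(\frac{|x-y|^4}{12(n-6)}|W|^2-R_{,ij}(x-y)^i(x-y)^j|x-y|^2\Big)+\dots\Big).$$
After rescaling $x=\varepsilon\xi$, the correction to $-\frac14\iint V|\varphi|^2|\varphi|^2$ is $-c\,\varepsilon^4\big(|W(p)|^2\,\alpha_n-\beta_n\,\Delta R\big)$ with explicit positive moments $\alpha_n,\beta_n$ of $f^{n-1}$. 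Using $-\Delta R=\frac16|W|^2$, this becomes a strictly negative multiple of $|W(p)|^2\varepsilon^4$, combined with the Dirac contribution from Step 1.

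For $n=6$ the logarithmic term $-\frac{1}{1440}|W|^2\ln r$, weighted by the bubble, gives $|W|^2\varepsilon^4|\ln\varepsilon|$. For $n=4,5$ the mass term $b_nA(p)$ gives, after the power $\frac2{n-2}$, a positive addition to $V_g$ of order $|x-y|^{n-4}A(p)$. This yields $-cA(p)\varepsilon^{n-2}$, since $\int|\varphi_\varepsilon|^2\sim\varepsilon$ and the factor contributes $\varepsilon^{n-2}\cdot\varepsilon^{-(n-2)}$ after scaling. All errors (tails, the $O(r^{7-n})$ remainder, the $1+O(r^N)$ volume factor) are bounded as in the proof of Proposition \ref{lemma1 exp} and are of the stated orders.

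\textbf{Step 3: normalization and sign.} The Euclidean part equals $J_{\R^n}(\Psi)$, which equals $\overline{Y}(S^n,[g_0])$ by the choice of $a_n$ in (\ref{cn}) (Corollary \ref{cor1}), since $\Psi$ is the bubble of Theorem \ref{thm1}.

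The main obstacle is the precise bookkeeping of the $\varepsilon^4$ coefficient for $n\ge7$. The Dirac-part fourth-order terms from $b_{ij}$ (involving $R_{\alpha\beta,k\ell}$ and $RR$ products) and the potential part both contribute. We must show that, using Proposition \ref{proplp} iii)--iv) and Lemma \ref{lemyx}, the total is a negative multiple of $|W(p)|^2$. My first attempt would be to reduce everything, via the angular averages $\int_{S^{n-1}}x^\alpha x^\beta x^k x^\ell=c(\delta\delta+\delta\delta+\delta\delta)$, to $R_{,kk}$ and $|W|^2$, and then check that the coefficient has a sign.
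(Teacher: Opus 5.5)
Your skeleton is the paper's. You use the decomposition (\ref{decomp}), drop the $X$- and $\nabla\eta$-terms by skewness, apply Lemma \ref{lemyx} to the quadratic part of $b_{ij}-\delta_{ij}$, and expand $V_g=d_nG_g^{2/(n-2)}$. That expansion gives the mass term for $n=4,5$ and the $|W(p)|^2\varepsilon^4|\ln\varepsilon|$ term for $n=6$. For $n\ge7$, via $-\Delta R=\frac16|W|^2$, it gives a gain $-c|W(p)|^2\varepsilon^4$, which is exactly the paper's $H_2$. For $n\le 6$, crude bounds on the remaining Dirac terms are enough for the sign, since those gains dominate. For $n\ge7$, however, the proposal has a genuine gap in two places.

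\textbf{The $\Omega$-term.} Your dismissal of it is false. A product of three distinct orthonormal vectors is self-adjoint, not skew:
$(e_i\cdot e_j\cdot e_k)^*=-e_k\cdot e_j\cdot e_i=e_i\cdot e_j\cdot e_k$.
Already in dimension $3$, $e_1\cdot e_2\cdot e_3$ acts as $\pm1$ on spinors. Hence $\langle\Omega\cdot\varphi_\varepsilon,\varphi_\varepsilon\rangle\neq0$ in general. The cubic leading part of $\Omega$, paired through the factor $(1-x)$ in $\Psi$, produces a contribution of exact order $\varepsilon^4$ (for $n\ge5$). Its coefficients are combinations of $\langle\partial_{x_i}\cdot\partial_{x_j}\cdot\partial_{x_k}\cdot\partial_{x_\ell}\cdot\Psi_0,\Psi_0\rangle$, which is the same order as the signed term. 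This is precisely what the second half of Lemma \ref{lemyx} is for. The paper chooses $\Psi_0$ to annihilate this coefficient and is left with $F_2\le C\int|x|^4|\Psi_\varepsilon|^2=O(\varepsilon^5)$.

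\textbf{The sign of the Dirac part.} This is the decisive gap. You leave open the sign of the fourth-order contribution of $\sum_{i,j}(b_{ij}-\delta_{ij})\,\partial_{x_i}\cdot\nabla_{\partial_{x_j}}$ and plan a coefficient comparison with the potential term. Since that contribution is also of order $\varepsilon^4$, nothing is proved for $n\ge7$ until its sign is known. The paper needs no comparison. It imports from \cite{YX1} the estimate
$F_3\le -C|W(p)|^2\varepsilon^4+O(\varepsilon^5)$,
which comes from the spinorial Aubin-type computation using Lemma \ref{lemyx} and Proposition \ref{proplp} iii)--iv). So the Dirac part and the convolution part both lower the energy, and their $|W(p)|^2$-terms simply add. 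That input is the missing idea; with it, your ``main obstacle'' disappears.
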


\begin{proof}
Recall  that in conformal normal coordinates (Proposition \ref{proplp}) we can assume that the volume form takes the form $dv_{g}=(1+O(|x|^{N}))dx$ around $p$, where $N\geq 2$. First, we start by estimating the $L^{2}$ norm of $\varphi_{\varepsilon}$. Indeed, we have

\begin{align}
\int_{M}|\varphi_{\varepsilon}|^{2}\ dv_{g}&= \int_{B_{2\delta}}|\varphi_{\varepsilon}|^{2}\ dv_{g}\notag\\
&=\int_{|x|\leq \delta}|\Psi_{\varepsilon}(x)|^{2}\ dx + \int_{\delta\leq |x|\leq 2\delta}|\eta(x)\Psi_{\varepsilon}(x)|^{2}\ dx +O\Big( \int_{|x|\leq \delta}|x|^{N}|\Psi_{\varepsilon}(x)|^{2}\ dx\Big)\notag\\
&=\varepsilon a_{n}\omega_{n-1}\int_{0}^{\frac{\delta}{\varepsilon}}\frac{r^{n-1}}{(1+r^{2})^{n-1}}\ dr +O\Big(\varepsilon \int_{\frac{\delta}{\varepsilon}}^{\frac{2\delta}{\varepsilon}}\frac{r^{n-1}}{(1+r^{2})^{n-1}}\ dr\Big)\notag\\
&\quad+O\Big(\varepsilon^{N+1}\int_{0}^{\frac{2\delta}{\varepsilon}} \frac{r^{n+N-1}}{(1+r^{2})^{n-1}}\ dr \Big)\notag\\
&=Q_{n}\varepsilon+O(\varepsilon^{n-1})+ \left\{\begin{array}{ll}
O(\varepsilon^{n-1}), & n< N+2\\
\\
O(\varepsilon^{N+1}|\ln(\varepsilon)|), & n=N+2\\
\\
O(\varepsilon^{N+1}), & n\geq N+2
\end{array}
\right.\notag\\
&=Q_{n}\varepsilon+ \left\{\begin{array}{ll}
O(\varepsilon^{n-1}), & n<N+2\\
\\
O(\varepsilon^{N+1}|\ln(\varepsilon)|), & n=N+2\\
\\
O(\varepsilon^{N+1}), & n\geq N+2
\end{array}
\right..\label{L2norm}
\end{align}
Here, $$Q_{n}:=a_{n} \omega_{n-1}\int_{0}^{\infty} \frac{r^{n-1}}{(1+r^{2})^{n-1}}\ dr.$$
Next, we estimate $\int_{M}\langle \overline{D} \varphi_{\varepsilon},\varphi_{\varepsilon}\rangle \ dv_{g}$. Using the same decomposition as in $(\ref{decomp})$, we see that
\begin{align}
\int_{M}\langle \overline{D} \varphi_{\varepsilon},\varphi_{\varepsilon}\rangle \ dv_{g}&= \int_{M}\int_{\R^{n}}V_{g_{\R^{n}}}(x,y)|\Psi_{\varepsilon}(y)|^{2} |\varphi_{\varepsilon}(x)|^{2} \ dy \ dv_{g}(x)+\int_{M}\eta^{2} \langle \Omega\cdot \overline{\Psi_{\varepsilon}},\overline{\Psi_{\varepsilon}}\rangle \ dv_{g}\notag\\
&\quad + \int_{M}\sum_{i,j}(b_{ij}-\delta_{ij}) \eta^{2} \langle \overline{\partial_{x_{i}}\cdot \nabla_{\partial_{x_{j}}}\Psi_{\varepsilon}}, \overline{\Psi_{\varepsilon}}\rangle \ dv_{g}\notag\\
&=F_{1}+F_{2}+F_{3}.\notag
\end{align}
We will estimate each term individually starting by $F_{1}$. We have
\begin{align}
F_{1}&=\int_{M}\int_{\R^{n}} V_{g_{\R^{n}}}(x,y)|\Psi_{\varepsilon}(y)|^{2} \eta (x)|\overline{\Psi_{\varepsilon}}(x)|^{2} \ dy \ dv_{g}(x)\notag\\
&=\int_{|x|\leq \delta}\int_{\R^{n}} V_{g_{\R^{n}}}(x,y)|\Psi_{\varepsilon}(y)|^{2} |\Psi_{\varepsilon}(x)|^{2} \ dy dx\notag\\
&\quad+\int_{\delta\leq |x|\leq 2\delta}\int_{\R^{n}} V_{g_{\R^{n}}}(x,y)|\Psi_{\varepsilon}(y)|^{2} |\eta(x)|^{2} |\Psi_{\varepsilon}(x)|^{2} \ dy dx \notag\\
&\quad+\int_{|x|\leq 2\delta}\int_{\R^{n}} V_{g_{\R^{n}}}(x,y)|\Psi_{\varepsilon}(y)|^{2} |x|^{N} |\Psi_{\varepsilon}(x)|^{2} \ dy dx.\notag
\end{align}
But recall that
$$\int_{\R^{n}} V_{g_{\R^{n}}}(x,y)|\Psi_{\varepsilon}(y)|^{2}\ dy=c_{n}^{\frac{1}{n-1}}|\Psi_{\varepsilon}(x)|^{\frac{2}{n-1}},$$
where $c_{n}$ is the constant defined in $(\ref{cn})$. Hence,
\begin{align}
\int_{|x|\leq \delta}\int_{\R^{n}}V_{g_{\R^{n}}}(x,y)|\Psi_{\varepsilon}(y)|^{2} |\Psi_{\varepsilon}(x)|^{2} \ dy dx&=c_{n}^{\frac{1}{n-1}}a_{n}^{\frac{n}{n-1}}\omega_{n-1}\int_{0}^{\frac{\delta}{\varepsilon}}\frac{r^{n-1}}{(1+r^{2})^{n}}\ dr\notag\\
&=c_{n}^{\frac{1}{n-1}}a_{n}^{\frac{n}{n-1}}\omega_{n-1}\int_{0}^{+\infty}\frac{r^{n-1}}{(1+r^{2})^{n}}\ dr +O(\varepsilon^{n}).\label{Gpsi}
\end{align}
On the other hand, 
\begin{align}
\int_{\delta\leq |x|\leq 2\delta}\int_{\R^{n}}V_{g_{\R^{n}}}(x,y)|\Psi_{\varepsilon}(y)|^{2} |\eta(x)|^{2} |\Psi_{\varepsilon}(x)|^{2} \ dy dx &\leq C\int_{\frac{\delta}{\varepsilon}}^{\frac{2\delta}{\varepsilon}}\frac{r^{n-1}}{(1+r^{2})^{n}}\ dr=O(\varepsilon^{n}).\label{Gpsi2}
\end{align}
And to finish, we have
\begin{align}
\int_{|x|\leq 2\delta}\int_{\R^{n}}V_{g_{\R^{n}}}(x,y)|\Psi_{\varepsilon}(y)|^{2} |x|^{N}|\Psi_{\varepsilon}(x)|^{2} \ dy dx&=O\Big(\varepsilon^{N}\int_{0}^{\frac{2\delta}{\varepsilon}}\frac{r^{n+N-1}}{(1+r^{2})^{n}}\ dr \Big)\notag\\
&=\left\{\begin{array}{lll}
O(\varepsilon^{n}), & n<N\\
\\
O(\varepsilon^{N}|\ln(\varepsilon)|), & n=N\\
\\
O(\varepsilon^{N}), & n>N
\end{array}
\right. .\notag
\end{align}
Therefore,
$$F_{1}=c_{n}^{\frac{1}{n-1}}a_{0}^{\frac{n}{n-1}}\omega_{n-1}\int_{0}^{+\infty}\frac{r^{n-1}}{(1+r^{2})^{n}}\ dr + \left\{\begin{array}{lll}
O(\varepsilon^{n}), & n<N\\
\\
O(\varepsilon^{N}|\ln(\varepsilon)|), & n=N\\
\\
O(\varepsilon^{N}), & n>N
\end{array}
\right..$$
In order to estimate $F_{2}$, we first write
$$\Omega=\sum_{\begin{array}{cc}
i,j,k\\
i\not=j\not=k\not=i
\end{array}} A_{ijk\alpha\beta\gamma} x^{\alpha}x^{\beta}x^{\gamma}e_{i}\cdot e_{j}\cdot e_{k} +O(|x|^{4}),$$
where $A_{ijk\alpha\beta\gamma}=-\frac{1}{144}\sum\limits_{\ell} R_{\ell\beta\gamma k}(R_{ji\alpha \ell}+R_{j\ell\alpha i})$. In particular, using the expression of $\Psi_{\varepsilon}$,
\begin{align}
\sum_{\begin{array}{cc}
i,j,k\\
i\not=j\not=k\not=i
\end{array}} A_{ijk\alpha\beta\gamma}&\int_{|x|\leq \delta}  x^{\alpha}x^{\beta}x^{\gamma}\langle e_{i}\cdot e_{j}\cdot e_{k}\cdot \Psi_{\varepsilon},\Psi_{\varepsilon}\rangle \ dx\notag\\
&=\varepsilon^{4}\sum_{\ell} \tilde{A}_{ijk\ell}\langle \partial_{x_{i}}\cdot \partial_{x_{j}} \cdot \partial_{x_{k}}\cdot \partial_{x_{\ell}} \cdot \Psi_{0},\Psi_{0}\rangle \notag.
\end{align}
Therefore, using Lemma \ref{lemyx}, we have
$$F_{2}\leq C\int_{|x|\leq 2\delta}|x|^{4}|\Psi_{\varepsilon}(x)|^{2}\ dx.$$
Therefore, we conclude that
$$F_{2}\leq C\left\{\begin{array}{ll}
\varepsilon^{n-1}, & 4\leq n\leq 5\\
\\
\varepsilon^{5}|\ln(\varepsilon)|, & n=6\\
\\
\varepsilon^{5}, & n\geq 7
\end{array}
\right. .
$$
Finally for $F_{3}$ we need to use the following estimate proved in \cite{YX1}, which relies on Lemma \ref{lemyx}:
$$F_{3}\leq -C|W(p)|^{2}\left\{ \begin{array}{ll}
\varepsilon^{4}|\ln(\varepsilon)|, & n=4\\
\\
\varepsilon^{4}, & n\geq 5
\end{array}
\right.   + 
\left\{ \begin{array}{ll}
O(\varepsilon^{n-1}), & 4\leq n \leq 5\\
\\
O(\varepsilon^{5}), & n\geq 6
\end{array}
\right. \; .$$

\noindent
Therefore, if $N=5$, we can conclude that
\begin{equation}\label{dep}
\int_{M}\langle \overline{D} \varphi_{\varepsilon},\varphi_{\varepsilon}\rangle \ dv_{g}\leq c_{n}^{\frac{1}{n-1}}a_{n}^{\frac{n}{n-1}}\omega_{n-1}\int_{0}^{+\infty}\frac{r^{n-1}}{(1+r^{2})^{n}}\ dr -C |W(p)|^{2}\left\{\begin{array}{ll}\varepsilon^{4}|\ln(\varepsilon)| +O(\varepsilon^{3}), & n=4\\
\\
\varepsilon^{4}+O(\varepsilon^{4}), & n= 5\\
\\
\varepsilon^{4}+o(\varepsilon^{4}), & n\geq 6
\end{array}
\right.
\end{equation}

\noindent
We focus now on estimating the term
\begin{align}
& \qquad \int_{M}\int_{M}V_{g}(x,y)|\varphi_{\varepsilon}(y)|^{2}|\varphi_{\varepsilon}(x)|^{2}\ dv(y)\ dv(x) \notag\\
& = \int_{B_{p}(2\delta)}\int_{B_{p}(2\delta)}V_{g}(x,y)|\psi_{\varepsilon}(y)|^{2}|\psi_{\varepsilon}(x)|^{2}\ dy\ dx \notag\\
& +\int_{B_{p}(2\delta)}\int_{B_{p}(2\delta)}V_{g}(x,y)|y|^{N}|\psi_{\varepsilon}(y)|^{2}|x|^{N}|\psi_{\varepsilon}(x)|^{2}\ dy\ dx\notag\\
&=\int_{|x|\leq 2\delta}\int_{|y|\leq 4\delta}V_{g_{\R^{n}}}(0,y)|\psi_{\varepsilon}(x-y)|^{2}|\psi_{\varepsilon}(x)|^{2}\ dy \ dx \notag \\
&+\int_{|x|\leq 2\delta}\int_{|y|\leq 4\delta}h(y)|\psi_{\varepsilon}(x-y)|^{2}|\psi_{\varepsilon}(x)|^{2}\ dy \ dx\notag\\
&+\int_{B_{p}(2\delta)}\int_{B_{p}(2\delta)}V_{g}(x,y)|y|^{N}|\psi_{\varepsilon}(y)|^{2}|x|^{N}|\psi_{\varepsilon}(x)|^{2}\ dy \ dx\notag\\
&=H_{1}+H_{2}+H_{3}.\notag\\
\end{align}

\noindent
We first start by estimating $H_{3}=\int_{B_{p}(2\delta)}\int_{B_{p}(2\delta)}V_{g}(x,y)|y|^{N}|\psi_{\varepsilon}(y)|^{2}|x|^{N}|\psi_{\varepsilon}(x)|^{2}\ dy \ dx$. Using the expression of $\psi_{\varepsilon}$ in terms of $\Psi$ (\ref{psiepsilon}) and Young's inequality, we see that
$$H_{3}\leq C\left\{ \begin{array}{ll}
\varepsilon^{2(n-1)}, & n<N+1\\
\\
\varepsilon^{2N}|\ln(\varepsilon)|^{2}, & n=N+1\\
\\
\varepsilon^{2N}, & n>N+1
\end{array}
\right..$$
For $H_{2}$ we distinguish three cases, depending on the expansion of the Green's function. Notice that the function $F:x\mapsto \int_{|y|\leq 2\delta}h(y)|\psi_{\varepsilon}(x-y)|^{2}\ dy$ is continuous  for $|x|\leq \delta$. We will estimate its value around $x=0$ corresponding to the point $p\in M$.
\begin{itemize}
\item The case $n\geq 7$:
\end{itemize}
In this case, there exist positive constants $k_{n,1}$ and $k_{n,2}$ (that can be computed explicitly from Proposition \ref{propgreen}) such that 
$$h(y)=k_{n,1}|y|^{2}|W(p)|^{2}-k_{n,2}R_{,ij}y^{i}y^{j}+O(|y|^{3}) \; .$$
Thus, there exist two positive constants $k_{n,3}$ and $k_{n,4}$ such that
$$ F(0)=k_{n,3}|W(p)|^{2}\int_{|y|\leq 2\delta}|y|^{2}|\psi_{\varepsilon}(y)|^{2}\ dy=k_{n,4}|W(p)|^{2}\varepsilon^{3}.$$
Taking $\delta$ even smaller if necessary, we can assume that for $|x|\leq 2\delta$, $F(x)\geq \frac{k_{n,4}}{2}|W(p)|^{2}\varepsilon^{3}$. Hence, there exists a positive constant $k_{n,5}$ for which we have
$$H_{2}\geq k_{n,5}|W(p)|^{2}\varepsilon^{4}+o(\varepsilon^{4}).$$

\begin{itemize}
\item The case $n=6$:
\end{itemize}
In this case, there exists a positive constant $k_{n,1}$ such that
$$h(y)=-k_{n,1}|W(p)|^{2}|y|^{2}\ln(|y|)+O(|y|^{2}).$$
Therefore, there exist $k_{n,2}$ and $k_{n,3}$, two positive constants, so that
\begin{align}
F(0)&=-k_{n,2}|W(p)|^{2}\int_{|y|\leq 2\delta}|y|^{2}\ln(|y|)|\psi_{\varepsilon}(y)|^{2}\ dy\notag\\
&=-k_{n,3}|W(p)|^{2}\varepsilon^{4}\ln(\varepsilon)+O(\varepsilon^{4})\notag
\end{align}
Thus, choosing $\delta$ even smaller if necessary, we have 
$$H_{2}\geq k_{n,4}|W(p)|^{2}\varepsilon^{4}|\ln(\varepsilon)|+O(\varepsilon^{4}),$$
where $k_{n,4}$ is another positive constant.

\begin{itemize}
\item The case $n=5$ or $n=4$:
\end{itemize}
In these two cases we have, for a positive constant $k_{n,1}$
$$h(y)=k_{n,1}\left\{\begin{array}{ll}
A(p)|y|+O(|y|^{2}), & n=5\\
\\
A(p)+O(|y|), & n=4
\end{array}
\right..$$
Thus, following the same process, we have that for a positive constant $k_{n,2}$
$$H_{2}\geq k_{n,2}A(p)\left\{\begin{array}{ll}
\varepsilon^{3}+O(\varepsilon^{3}), & n=5\\
\\
\varepsilon^{2}+O(\varepsilon^{2}), & n=4
\end{array}
\right..$$
To finish the proof, we need to estimate $H_{1}$. Indeed,

\begin{align}
H_{1}&=\int_{|x|\leq \delta}\int_{|y|\leq \delta}V_{g_{\R^{n}}}(x,y)|\Psi_{\varepsilon}(y)|^{2}|\Psi_{\varepsilon}(x)|^{2}\ dy \ dx\notag\\
&\quad+ 2\int_{|x|\leq \delta}\int_{\delta \leq |y|\leq 2\delta}V_{g_{\R^{n}}}(x,y)|\psi_{\varepsilon}(y)|^{2}|\Psi_{\varepsilon}(x)|^{2}\ dy \ dx \notag\\
&\quad+\int_{\delta\leq|x|\leq 2\delta}\int_{\delta \leq |y|\leq 2\delta}V_{g_{\R^{n}}}(x,y)|\psi_{\varepsilon}(y)|^{2}|\psi_{\varepsilon}(x)|^{2}\ dy \ dx\notag\\
&=L_{1}+L_{2}+L_{3}.\notag
\end{align}
Notice that using Young's inequality we have
\begin{align}
L_{3} \leq C\|\Psi_{\varepsilon}\|_{L^{\frac{2n}{n-1}}(B_{2\delta}\setminus B_{\delta})}^{4} \leq C\Big( \int_{\frac{\delta}{\varepsilon}}^{\frac{2\delta}{\varepsilon}}\frac{r^{n-1}}{(1+r^{2})^{n}}\ dr \Big)^{\frac{2(n-1)}{n}}
 \leq C\varepsilon^{2(n-1)}.\notag
\end{align}
On the other hand, we have
\begin{align}
L_{2}&\leq C\|\Psi_{\varepsilon}\|_{L^{\frac{2n}{n-1}}(B_{2\delta}\setminus B_{\delta})}^{2}\|\Psi_{\varepsilon}\|_{L^{\frac{2n}{n-1}}}^{2} \leq C\varepsilon^{n-1}\notag
\end{align}
For $L_1$ we can easily see that
\begin{align}
L_{1}&=\int_{\R^{n}}\int_{\R^{n}}V_{g_{\R^{n}}}(x,y)|\Psi_{\varepsilon}(y)|^{2}|\Psi_{\varepsilon}(x)|^{2} \ dy \ dx +O(\varepsilon^{n-1})\notag\\
&=c_{n}^{\frac{1}{n-1}}a_{n}^{\frac{n}{n-1}}\omega_{n-1}\int_{0}^{+\infty}\frac{r^{n-1}}{(1+r^{2})^{n}}\ dr +O(\varepsilon^{n-1}).\notag
\end{align}
Hence,
$$H_{1}=c_{n}^{\frac{1}{n-1}}a_{n}^{\frac{n}{n-1}}\omega_{n-1}\int_{0}^{+\infty}\frac{r^{n-1}}{(1+r^{2})^{n}}\ dr+O(\varepsilon^{n-1}).$$
Therefore, we conclude that
\begin{align}\label{vep}
& \int_{M}\int_{M} V_{g}(x,y)|\varphi_{\varepsilon}(y)|^{2}|\varphi_{\varepsilon}(x)|^{2}\ dv(y)\ dv(x) \geq \notag\\
& c_{n}^{\frac{1}{n-1}}a_{n}^{\frac{n}{n-1}}\omega_{n-1}\int_{0}^{+\infty}\frac{r^{n-1}}{(1+r^{2})^{n}}\ dr
+ C\left\{ \begin{array}{lll}
A(p)\varepsilon^{2}+O(\varepsilon^{2}), & n=4\\
\\
A(p)\varepsilon^{3}+O(\varepsilon^{3}), & n=5\\
\\
|W(p)|^{2}\varepsilon^{4}|\ln(\varepsilon)|+O(\varepsilon^{4}), & n=6\\
\\
|W(p)|^{2}\varepsilon^{4} + o(\varepsilon^{4}), & n\geq 7
\end{array}
\right.
\end{align}
Thus, we can obtain $(\ref{eqj})$ from $(\ref{dep})$ and $(\ref{vep})$.
\end{proof}

\noindent
We are now in position to state Theorem \ref{main} in this case:

\begin{theorem}
Let $(M,g)$ be a closed Riemannian spin manifold of dimension $n$, with positive Yamabe invariant $Y(M,[g])>0$. Let us assume $n=4,5$ or $n\geq 6$ with $M$ not locally conformally flat. Then $\overline{Y}(M,[g])<\overline{Y}(S^{n},[g_{0}])$. In particular, the equation $(\ref{eqv})$ has a non-trivial ground state solution.
\end{theorem}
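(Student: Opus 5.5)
The plan is to combine Proposition \ref{prop exp} with the gradient estimate of Proposition \ref{lemma1 exp} and the energy expansion of Proposition \ref{lemma 2 exp}, applied to the test spinors $\varphi_{\varepsilon}$ centered at a well-chosen point $p$, and then let $\varepsilon\to 0$.

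\textbf{Choice of the point $p$.}
\begin{itemize}
\item For $n\geq 6$, $M$ is not locally conformally flat, so we pick $p$ with $|W(p)|^{2}\neq 0$.
\item For $n=4,5$, we pick any $p$. Since $Y(M,[g])>0$, the positive mass theorem recalled after Proposition \ref{propgreen} gives $A(p)\geq 0$, with $A(p)=0$ only if $(M,[g])$ is conformal to the round sphere.
\item If $(M,[g])$ is conformal to $(S^{n},[g_{0}])$, equality $\overline{Y}(M,[g])=\overline{Y}(S^{n},[g_{0}])$ is immediate by conformal invariance. So for $n=4,5$ we may assume $A(p)>0$.
\end{itemize}
We also fix conformal normal coordinates at $p$ (Proposition \ref{proplp}) with $N$ large, say $N\geq 5$, so that all the $O(|x|^{N})$ remainders in the expansions are negligible.

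\textbf{Applying the estimates.} By Proposition \ref{lemma1 exp}, $\nabla J_{g}(\varphi_{\varepsilon})\to 0$ in $H^{-\frac12}$. Proposition \ref{lemma 2 exp}, together with the matching lower bound, shows that $J_{g}(\varphi_{\varepsilon})\to \overline{Y}(S^{n},[g_{0}])>0$. Along a sequence $\varepsilon_{k}\to0$, $(\varphi_{\varepsilon_k})$ is therefore a $(PS)_{c}$ sequence with $c>0$. Propositions \ref{prop diff} and \ref{prop exp} then give $\mathcal M\neq\emptyset$ and
$$\delta_{0}\leq J_{g}(\varphi_{\varepsilon})+O\big(\|\nabla J_{g}(\varphi_{\varepsilon})\|^{2}_{H^{-\frac12}}\big).$$

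\textbf{Main obstacle: comparing orders.} The key step is to check that the squared gradient error is of strictly lower order than the signed gain in (\ref{eqj}).
\begin{itemize}
\item $n=4$: the error is $O(\varepsilon^{3})$ against the gain $A(p)\varepsilon^{2}$.
\item $n=5$: the error is $O(\varepsilon^{4})$ against $A(p)\varepsilon^{3}$.
\item $n=6$: the error is $O(\varepsilon^{5})$ against $|W(p)|^{2}\varepsilon^{4}|\ln\varepsilon|$.
\item $n=7$: the error is $O(\varepsilon^{6}|\ln\varepsilon|^{8/7})$ against $|W(p)|^{2}\varepsilon^{4}$.
\item $n\geq 8$: the error is $O(\varepsilon^{6})$ against $|W(p)|^{2}\varepsilon^{4}$.
\end{itemize}
In every case the error is $o$ of the gain. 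The remainder terms inside (\ref{eqj}) itself ($O(\varepsilon^{3})$, $O(\varepsilon^{4})$, $o(\varepsilon^{4})$) are also of lower order than the leading signed term. For $n=4,5$ this has to be read off from the expansion, where the $O(\cdot)$ written beside $A(p)\varepsilon^{2}$ or $A(p)\varepsilon^{3}$ is genuinely of higher order; I would recheck this point carefully in the $H_{2}$ estimate. This bookkeeping is where I expect the real difficulty.

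\textbf{Conclusion.} For $\varepsilon$ small we obtain $\delta_{0}<\overline{Y}(S^{n},[g_{0}])$. Since $\delta_{0}$ coincides with the ground state level $\overline{Y}(M,[g])$ by (\ref{delta0}) and Corollary \ref{cor1}, this proves the strict inequality. Because (PS) holds below $\overline{Y}(S^{n},[g_{0}])$ (Theorem \ref{thm1}), Corollary \ref{cor1} iii) and Proposition \ref{prop exp} give a critical point of $J_{g}$ at level $\delta_{0}>0$. This critical point is the non-trivial ground state.
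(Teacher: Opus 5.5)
Your proposal is correct and follows the paper's proof. The paper likewise combines Proposition \ref{prop exp} with the squared gradient bound of Proposition \ref{lemma1 exp} and the expansion (\ref{eqj}) of Proposition \ref{lemma 2 exp}, observes that the error is of lower order than the signed gain in each dimension, and concludes with Corollary \ref{cor1}. Your explicit exclusion of the round sphere for $n=4,5$ (needed for $A(p)>0$, which the paper uses without comment) and your check that $J_g(\varphi_\varepsilon)\to\overline{Y}(S^{n},[g_{0}])>0$, so that Proposition \ref{prop exp} applies, supply points the paper leaves implicit.
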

\begin{proof}
We recall that from Proposition \ref{prop exp}, we have for small $\varepsilon>0$ and with $\delta_0$ as in (\ref{delta0}),
$$\delta_{0}\leq J_{g}(\varphi_{\varepsilon})+O(\|\nabla J_{g}(\varphi_{\varepsilon})\|^{2}).$$
Therefore, using Proposition \ref{lemma1 exp} and \ref{lemma 2 exp}, we have:
$$\delta_{0}\leq \overline{Y}(S^{n},[g_{0}])-C\left\{\begin{array}{lll}
A(p)\varepsilon^{2} +O(\varepsilon^{3}), & n=4\\
\\
A(p)\varepsilon^{3}+O(\varepsilon^{4}), & n=5\\
\\
|W(p)|^{2}\varepsilon^{4}|\ln(\varepsilon)|+O(\varepsilon^{4}), & n=6\\
\\
|W(p)|^{2}\varepsilon^{4}+o(\varepsilon^{4}), & n\geq 7
\end{array}
\right. .
$$
In particular, we have
$$\overline{Y}(M,[g])= \delta_{0}<\overline{Y}(S^{n},[g_{0}]) \; .$$
Therefore, using Corollary \ref{cor1} we deduce that $(\ref{eqv})$ has a non-trivial ground-state solution.
\end{proof}

\subsection{\texorpdfstring{The case $n\geq 6$ with $M$ locally conformally flat}{The case n greater than 6 with M locally conformally flat}}

The locally conformally flat case is slightly easier than the previous one. This is in fact surprising when compared to the classical Yamabe problem, where one needs to choose a different test function with more control on the tails of its energy. In fact, we are closer to the framework of \cite{HV}, rather than that of \cite{S}. In our setting, we will be using the same test function defined in $(\ref{psiepsilon})$. Though, since the manifold is locally conformally flat, we can assume that it is flat in the neighborhood of a point $p\in M$. In this flat coordinate patch, we see that the quantities $\Omega$ and $X$ vanish identically. Moreover, $b_{ij}=\delta_{ij}$. Hence, after identifying $p$ with zero in a coordinate patch $B_{2\delta}$, we have that
$$D_{g}\varphi_{\varepsilon}=\Big(\int_{\R^{n}}V_{g_{\R^{n}}}(x,y)|\Psi_{\varepsilon}(y)|^{2}\ dy  \Big)\psi_{\varepsilon} +\nabla \eta \cdot \Psi_{\varepsilon}(x).$$
Moreover, from Proposition \ref{propgreen}, we have that
$$G_{g}(x,y)=b_{n}\Big(\frac{1}{|x-y|^{n-2}}+A(x) + O(|x-y|)\Big),$$
where $A$ is the mass, which we know from the positive mass theorem \cite{W,PT}, that if $(M,[g])$ is not conformal to the standard sphere, then $A>0$.\\
Looking back at the proof of Proposition \ref{lemma1 exp}, we can see that for $n\geq 6$,

$$\|\nabla J_{g}(\varphi_{\varepsilon})\|_{H^{-\frac{1}{2}}}\leq C \varepsilon^{\frac{n-1}{2}}.$$
Moreover, because of the vanishing of $\Omega$ and $X$, there exists a positive $C$ such that
$$J_{g}(\varphi_{\varepsilon})\leq \overline{Y}(S^{n},[g_{0}])-CA(p)\varepsilon^{n-2}+O(\varepsilon^{n-1}).$$
Again, using Proposition \ref{prop exp}, we have for $\varepsilon>0$ small enough,
$$\overline{Y}(M,[g])<\overline{Y}(S^{n},[g_{0}]).$$
Thus we can conclude using Corollary \ref{cor1}.

\end{document}